\newtheorem{theo}{Theorem}[section]
\newtheorem{defi}[theo]{Definition}
\newtheorem{lem}[theo]{Lemma}
\newtheorem{prop}[theo]{Proposition}
\newtheorem{rem}[theo]{Remark}
\newtheorem{exam}[theo]{Example}
\newcommand{\agot}{\ensuremath{\mathfrak{a}}}
\newcommand{\bgot}{\ensuremath{\mathfrak{b}}}
\newcommand{\cgot}{\ensuremath{\mathfrak{c}}}
\newcommand{\hgot}{\ensuremath{\mathfrak{h}}}
\newcommand{\ggot}{\ensuremath{\mathfrak{g}}}
\newcommand{\rgot}{\ensuremath{\mathfrak{r}}}
\newcommand{\Ecal}{\ensuremath{\mathcal{E}}}
\newcommand{\Fcal}{\ensuremath{\mathcal{F}}}
\newcommand{\Hcal}{\ensuremath{\mathcal{H}}}
\newcommand{\Lcal}{\ensuremath{\mathcal{L}}}
\newcommand{\Mcal}{\ensuremath{\mathcal{M}}}
\newcommand{\Ncal}{\ensuremath{\mathcal{N}}}
\newcommand{\Scal}{\ensuremath{\mathcal{S}}}
\newcommand{\Ucal}{\ensuremath{\mathcal{U}}}
\newcommand{\Vcal}{\ensuremath{\mathcal{V}}}
\newcommand{\Wcal}{\ensuremath{\mathcal{W}}}
\newcommand{\Xcal}{\ensuremath{\mathcal{X}}}
\newcommand{\Cbb}{\ensuremath{\mathbb{C}}}
\newcommand{\Rbb}{\ensuremath{\mathbb{R}}}
\newcommand{\Zbb}{\ensuremath{\mathbb{Z}}}
\newcommand{\Tbb}{\ensuremath{\mathbb{T}}}
\newcommand{\Ko}{\ensuremath{{\mathbf K}^0}}
\newcommand{\Kun}{\ensuremath{\mathbf{K}^1}}
\newcommand{\KK}{\ensuremath{\mathbf{K}^*}}
\newcommand{\croc}{\ensuremath{\hookrightarrow}}
\newcommand{\T}{\ensuremath{\hbox{\bf T}}}
\newcommand{\sigmad}{\sigma_{\overline{\partial}}}
\newcommand{\cu}{\underline{\mathbb{C}}}
\newcommand{\indice}{\mathrm{Index}}
\newcommand{\indGa}{\mathrm{Ind}^{G}_{G_\chi}}
\newcommand{\bott}{\operatorname{Bott}}
\newcommand{\thom}{\operatorname{Thom}}
\def \Rgene {{R^{-\infty}}}
\newcommand{\supp}{\operatorname{\hbox{\rm \small Supp}}}
\renewcommand{\index}{\operatorname{index}}
\newcommand{\teta}{\operatorname{\bf \theta}}
\newcommand{\br}{\operatorname{\bf R}}
\newcommand{\bs}{\operatorname{\bf S}}
\newcommand{\bj}{\operatorname{\bf J}}
\def \dm  {{\rm DM}}
\def \Clif {{\rm Cl}}
\def \codim {{\rm codim}}
\begin{document}

\title{On the structure of $\KK_G(\T_G^* M)$}

\author{Paul-Emile  PARADAN}

\address{Institut de Math\'ematiques et de Mod\'elisation de Montpellier (I3M), 
Universit\'e Montpellier 2} 

\email{Paul-Emile.Paradan@math.univ-montp2.fr}

\date\today


\begin{abstract}
In this expository paper, we revisit the results of Atiyah-Singer and de Concini-Procesi-Vergne 
concerning the structure of the $K$-theory groups $\KK_G(\T_G^* M)$.
\end{abstract}


\maketitle

{\def\thefootnote{\relax}
\footnote{{\em Keywords} : K-theory, Lie group, equivariant index,
transversally elliptic symbol.\\
{\em 2010 Mathematics Subject Classification} :  19K56, 19L47, 57S15   }
\addtocounter{footnote}{-1}
}

{\small
\tableofcontents}

\section{Introduction}

When a compact Lie group $G$ acts on a compact manifold $M$, the $K$-theory group $\Ko_G(\T^*M)$ 
is the natural receptacle for the principal symbol of any $G$-invariant elliptic pseudo-differential operators on $M$. 
One important point of Atiyah-Singer's Index Theory \cite{Atiyah-Singer-1,Atiyah-Segal68,Atiyah-Singer-2,Atiyah-Singer-3} 
is that the equivariant index map $\indice^G_M:\Ko_G(\T^*M)\to R(G)$ 
can be defined as the composition of a pushforward map $i_!:\Ko_G(\T^*M)\to\Ko_G(\T^*V)$ associated to an 
embedding $M\stackrel{i}{\croc} V$ in a $G$-vector space, with the index map $\indice^G_V:\Ko_G(\T^*V)\to R(G)$ 
which is the inverse of the Bott-Thom isomorphism \cite{Segal68}. 

In his Lecture Notes \cite{Atiyah74} describing joint work with I.M. Singer, Atiyah extends the index theory 
to the case of transversally elliptic operators. If we denote by $\T^*_G M$ the closed subset of $\T^*M$, 
union of the conormals to the $G$-orbits, Atiyah explains how the principal symbol of a pseudo-differential 
transversally elliptic operator on $M$ determines an element of the equivariant $K$-theory
group $\Ko_G(\T^*_G M)$, and how the analytic index induces a map 
\begin{equation}\label{eq:indice-transverse}
\indice^G_M:\Ko_G(\T^*_G M)\to R^{-\infty}(G),
\end{equation} 
where $R^{-\infty}(G):=\hom(R(G),\Zbb)$.

Like in the elliptic case the map (\ref{eq:indice-transverse}) can be seen as the composition of a push-forward map $i_!:\Ko_G(\T_G^*M)\to\Ko_G(\T_G^*V)$ with the index map $\indice^G_V:\Ko_G(\T^*_G V)\to R^{-\infty}(G)$. Hence the comprehension of the $R(G)$-module 
\begin{equation}\label{eq:K_G-T_G-intro}
\KK_G(\T^*_G V)
\end{equation} 
is fundamental in this context. For example, in \cite{B-V.inventiones.96.2,pep-vergne:bismut} the authors gave a cohomological 
formula for the index and the knowledge of the generators of $\Ko_{U(1)}(\T^*_{U(1)} \Cbb)$ was used to establish the formula. In \cite{CPV-2012}, de Concini-Procesi-Vergne proved a formula for the multiplicities of the index by checking it on the generators of 
(\ref{eq:K_G-T_G-intro}).

\medskip

When $G$ is abelian, Atiyah-Singer succeeded to find a set of generators for (\ref{eq:K_G-T_G-intro}), and recently de Concini-Procesi-Vergne have shown that the index map identifies (\ref{eq:K_G-T_G-intro}) with a generalized Dahmen-Michelli space \cite{CPV-2010-1,CPV-2010-2}.  Let us explain their result. 

Let $\widehat{G}$ be the set of characters of the abelian compact Lie group $G$ : for any $\chi\in\widehat{G}$ we denote $\Cbb_\chi$ the corresponding complex one dimensional representation of $G$. We associate to any element $\Phi:=\sum_{\chi\in \widehat{G}}m_\chi \Cbb_\chi \in R^{-\infty}(G)$ its support $\supp(\Phi)=\{\chi\ \vert\  m_\chi\neq 0\}\subset \widehat{G}$.

For any real $G$-module $V$, we denote $\Delta_G(V)$ the set formed by the infinitesimal stabilizer of points in $V$: 
we denote $\hgot_{min}$ the minimal stabilizer.  For any $\hgot\in\Delta_G(V)$, we denote $H:=\exp(\hgot)$ the corresponding torus and we 
denote $\pi_H : \widehat{G}\to \widehat{H}$ the restriction map.

We denote $R^{-\infty}(G/H)\subset R^{-\infty}(G)$ the subgroup formed by the elements $\Phi \in R^{-\infty}(G)$ such that $\pi_H(\supp(\Phi))\subset \widehat{H}$ is reduced to the trivial representation. Let 
$$
\langle R^{-\infty}(G/H)\rangle\subset R^{-\infty}(G)
$$
be the $R(G)$-submodule generated by $R^{-\infty}(G/H)$. We  have $\Phi\in \langle R^{-\infty}(G/H)\rangle$ if and only if $\pi_H(\supp(\Phi))\subset \widehat{H}$ is finite.

For any subspace $\agot\subset\ggot$, we denote $V^\agot\subset V$ the subspace formed by the vectors fixed by the infinitesimal action of $\agot$. We fix an invariant complex structure on $V/V^\ggot$, hence the vector space 
$V/V^\hgot\subset V/V^\ggot$ is equipped with a complex structure for any 
$\hgot\in\Delta_G(V)$. Following \cite{CPV-2012}, we introduce the following submodule 
of $R^{-\infty}(G)$: the Dahmen-Michelli submodule 
\begin{eqnarray*}
   \lefteqn{\dm_G(V):= }\\
   & & \langle R^{-\infty}(G/H_{min})\rangle \bigcap 
  \left\{  \Phi\in R^{-\infty}(G) \, \vert\, \wedge^\bullet\overline{V/V^\hgot}\otimes \Phi=0,\  
   \forall \hgot\neq \hgot_{min}\in \Delta_G(V)   \right\},
\end{eqnarray*}
and the generalized Dahmen-Michelli submodule
\begin{equation*}
\Fcal_G(V):=\left\{ \Phi\in R^{-\infty}(G)\  \vert\ \wedge^\bullet\overline{V/V^\hgot}\otimes \Phi\in \langle R^{-\infty}(G/H)\rangle
,\ \forall \hgot\in \Delta_G(V) \right\}.
\end{equation*}
Note that the relation $\wedge^\bullet\overline{V/V^\hgot}\otimes \Phi\in \langle R^{-\infty}(G/H)\rangle$ becomes 
$\Phi\in \langle R^{-\infty}(G/H_{min})\rangle$ when $\hgot=\hgot_{min}$. Hence $\dm_G(V)$ is contained in $\Fcal_G(V)$.
We have the following remarkable result \cite{CPV-2010-2}.

\begin{theo}[de Concini-Procesi-Vergne]\label{theo-intro-CPV}
Let $G$ be an abelian compact Lie group, and  let  $V$ be a real $G$-module. Let $V^{gen}\subset V$ be its open subset formed by the $G$-orbits of maximal dimension. The index map defines 
\begin{itemize}
\item an isomorphism between  $\Ko_G(\T^*_G V)$ and $\Fcal_G(V)$,
\item an isomorphism between $\Ko_G(\T^*_G V^{gen})$ and $\dm_G(V)$.
\end{itemize}
\end{theo}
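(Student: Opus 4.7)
The plan is to establish both isomorphisms simultaneously via an inductive analysis on the stratification of $V$ by the finite poset $\Delta_G(V)$ of infinitesimal stabilizers. Each $\hgot \in \Delta_G(V)$ determines the $G$-invariant linear subspace $V^\hgot$, with $V^{gen}$ equal to the complement of the union of the $V^\hgot$ for $\hgot \neq \hgot_{min}$.

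First I verify that the index map takes values in the claimed submodules. For any $\hgot \in \Delta_G(V)$, the operation $\Phi \mapsto \wedge^\bullet \overline{V/V^\hgot} \otimes \Phi$ can be interpreted geometrically as the restriction of a transversally elliptic symbol on $V$ to the $H$-fixed submanifold $V^\hgot$, twisted by the Atiyah--Bott Thom class of the normal bundle $V/V^\hgot$. Since $H = \exp(\hgot)$ acts trivially on $V^\hgot$, the index of the restricted symbol lies in $\langle R^{-\infty}(G/H)\rangle$, which is precisely the defining condition for $\Fcal_G(V)$. On $V^{gen}$ the submanifold $V^\hgot$ does not meet the open stratum for $\hgot \neq \hgot_{min}$, so the restricted symbol vanishes identically and the index lies in $\dm_G(V)$.

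For the reverse inclusions I would proceed by induction on $|\Delta_G(V)|$, using the equivariant excision long exact sequence
\[
\cdots \to \Ko_G(\T^*_G V^{gen}) \to \Ko_G(\T^*_G V) \to \Ko_G(\T^*_G(V\setminus V^{gen})) \to \cdots
\]
attached to the $G$-invariant open embedding $V^{gen} \croc V$. The base case is the generic stratum: there $H_{min}$ acts trivially, so $\Ko_G(\T^*_G V^{gen}) \simeq R(H_{min}) \otimes \Ko_{G/H_{min}}(\T^*_{G/H_{min}} V^{gen})$, the quotient torus acts with only finite stabilizers, and one computes the image of the index directly by exhibiting a family of Bott-type generators indexed by the chambers of the weight arrangement and matching them with an explicit basis of $\dm_G(V)$. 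For the inductive step, the closed complement $V\setminus V^{gen}$ is the union of the proper subspaces $V^\hgot$ ($\hgot \neq \hgot_{min}$); after shrinking to a $G$-invariant tubular neighborhood, a Thom isomorphism reduces the K-theory of the complement to that of the smaller modules $V^\hgot$ under the subtorus $G/H$, and the induction hypothesis applies to each of them.

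The main obstacle will be to match the algebraic description of $\Fcal_G(V)$ with the geometric content of the excision sequence. Concretely, one must show that the image of $\Ko_G(\T^*_G(V\setminus V^{gen}))$ inside $R^{-\infty}(G)$ corresponds exactly to the elements $\Phi \in \Fcal_G(V)$ satisfying $\wedge^\bullet \overline{V/V^{\hgot_{min}}}\otimes \Phi = 0$, and more generally that the boundary maps in the excision sequence correspond stratum by stratum to the wedge-product operations defining $\Fcal_G(V)$. This compatibility between the geometric induction on strata and the algebraic Dahmen--Michelli relations is the technical heart of the argument; once secured, the five-lemma together with the base case propagates the isomorphism from $V^{gen}$ to all of $V$.
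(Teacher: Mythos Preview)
Your verification that the index lands in $\Fcal_G(V)$ and $\dm_G(V)$ is correct and matches the paper's argument (via Proposition~\ref{prop:restriction} and Remark~\ref{rem:indice-H-trivial}). The problem is with the inductive half.

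Your induction on $|\Delta_G(V)|$ is circular. The excision sequence for $V^{gen}\hookrightarrow V$ has as its third term the K-theory supported on the \emph{singular} set $V\setminus V^{gen}=\bigcup_{\hgot\neq\hgot_{min}} V^\hgot$, which is not a submanifold; you cannot take a single tubular neighbourhood of it. The paper's stratification (Section~\ref{sec:decomposition}) handles this by filtering $V$ by orbit dimension one step at a time, which eventually yields an isomorphism
\[
\Ko_G(\T^*_G V)\ \simeq\ \bigoplus_{\hgot\in\Delta_G(V)}\Ko_G\bigl(\T^*_G (V^\hgot)^{gen}\bigr).
\]
For $\hgot\neq\hgot_{min}$ the right-hand summands are indeed governed by smaller posets $\Delta_G(V^\hgot)\subsetneq\Delta_G(V)$, so your inductive hypothesis applies to them. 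But the summand for $\hgot=\hgot_{min}$ is $\Ko_G(\T^*_G V^{gen})$ itself, with the \emph{same} $\Delta_G(V)$: the induction does not touch it. Your ``base case'' is therefore not a base case at all---it is the second isomorphism in the theorem, for the full module $V$, and you have only sketched it (``Bott-type generators indexed by chambers, match with an explicit basis of $\dm_G(V)$''). That matching is precisely the hard content: one needs both injectivity of the index on $V^{gen}$ and a proof that the generators hit all of $\dm_G(V)$.

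The paper avoids this circularity by running a different induction, on $\dim V/V^\ggot$, peeling off a single one-dimensional weight space $V=W\oplus\Cbb_\chi$. This produces the short exact sequence
\[
0\longrightarrow \Ko_{G_\chi}(\T^*_{G_\chi} W)\stackrel{\bj}{\longrightarrow}\Ko_G(\T^*_G V)\stackrel{\br}{\longrightarrow}\Ko_G(\T^*_G W)\longrightarrow 0
\]
(and its analogue over the generic loci), which is matched term by term with the purely algebraic exact sequence
\[
0\longrightarrow R^{-\infty}(G_\chi)\stackrel{\indGa}{\longrightarrow} R^{-\infty}(G)\stackrel{\wedge^\bullet\overline{\Cbb_\chi}}{\longrightarrow} R^{-\infty}(G).
\]
Injectivity of the index, the identification with $\Fcal_G(V)$, and the identification with $\dm_G(V)$ are then each proved by a five-lemma argument against the inductive hypothesis applied to $W$ (for both $G$ and $G_\chi$), where $\dim W/W^\ggot$ and $\dim W/W^{\ggot_\chi}$ are strictly smaller. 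This ``remove one weight'' induction is what makes both isomorphisms accessible simultaneously; your stratification-by-stabilizer induction cannot get started without already knowing the $\dm_G(V)$ case.
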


\medskip

The purpose of this note is to give a comprehensive account on the work of Atiyah-Singer and de Concini-Procesi-Vergne concerning the structure of (\ref{eq:K_G-T_G-intro}) when $G$ is a compact abelian Lie group. We will explain in details the following facts :
\begin{itemize}
\item The decomposition of $\KK_G(\T^*_G M)$ relatively to the stratification of the manifold $M$ relatively to the type of infinitesimal stabilizers.
\item A set of generators of $\KK_G(\T^*_G V)$.
\item A set of generators of $\KK_G(\T^*_G V^{gen})$.
\item The injectivness of the index map $\indice^G_V:\Ko_G(\T^*_G V)\to R^{-\infty}(G)$.
\item The isomorphisms $\Ko_G(\T^*_G V)\simeq\Fcal_G(V)$ and  $\Ko_G(\T^*_G V^{gen})\simeq\dm_G(V)$.
\end{itemize}

\medskip

\medskip

{\bf Acknowledgements.}  I wish to thank Mich\`ele Vergne for various comments on this text.

\section{Preliminary on $K$-theory}

In this section, $G$ denotes a compact Lie group. Let $R(G)$ be the representation ring of $G$ and let 
$R^{-\infty}(G)=\hom(R(G),\Zbb)$. 

\subsection{Equivariant $K$-theory}
We briefly review the notations for K-theory that we will use, for a systematic treatment
see Atiyah \cite{Atiyah89} and Segal \cite{Segal68}.

Let $N$ be a locally compact topological space equipped with a continuous action of $G$. Let $E^{\pm}\to N$ be two $G$-equivariant complex vector bundles. An equivariant morphism $\sigma$ on $N$ is defined by a vector bundle map 
$\sigma\in \Gamma(N,\hom(E^+,E^-))$, that we denote also $\sigma:E^+\to E^-$: at each point $n\in N$, we have a linear map $\sigma(n): E^+_{n}\to E^-_{n}$. The support of the morphism $\sigma$ is the closed set formed by the point $n\in N$ where $\sigma(n)$ is not an isomorphism. We denote it  $\mathrm{Support}(\sigma)\subset N$. 

A morphism $\sigma$ is elliptic when its support is compact, and then it defines a class 
$$
[\sigma]\in \Ko_G(N)
$$
in the equivariant $\bf{K}$-group \cite{Segal68}. The group $\Kun_G(N)$ is by definition the group $\Ko_G(N\times \Rbb)$ where $G$ acts trivially on $\Rbb$.


Let $j:U\croc N $ be an invariant open subset, and let us denote by $r: N\setminus U\croc N$ the inclusion of the closed complement. 
We have a push-forward morphism $j_*: \KK_G(U)\to \KK_G(N)$ and a restriction morphism $r^*:  \KK_G(N)\to \KK_G(N\setminus U)$ 
that fit in a six terms exact sequence :
\begin{equation}\label{exact.sequence}
\xymatrix{
\Ko_G(U)\ar[r]^{j_*} & \Ko_G(N) \ar[r]^{r^*} & \Ko_G(N\setminus U)\ar[d]_{\delta}\\
\Kun_G(N\setminus U)\ar[u]^{\delta}& \ar[l]^{r^*}   \Kun_G(N) &\ar[l]^{j_*} \Kun_G(U)
  }
\end{equation}

In the next sections we will use the following basic lemma which is a direct consequence of (\ref{exact.sequence}).

\begin{lem}
Suppose that we have a morphism $S:\KK_G(N\setminus U)\to \KK_G(N)$ of $R(G)$-module such that $r^*\circ S$ is the identity on 
$\KK_G(N\setminus U)$. 
Then 
$$
\KK_G(N)\simeq \KK_G(U)\oplus\KK_G(N\setminus U)
$$
as $R(G)$-module.
\end{lem}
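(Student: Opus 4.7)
The plan is to deduce the splitting directly from the six-term exact sequence \eqref{exact.sequence}, using the existence of the section $S$ to show that the connecting homomorphisms vanish.

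First I would observe that, since $r^* \circ S = \id$ on $\KK_G(N\setminus U)$, the restriction map $r^*$ is surjective in both degrees $0$ and $1$. By exactness at $\Ko_G(N\setminus U)$ and at $\Kun_G(N\setminus U)$, this forces the two connecting maps $\delta$ appearing in \eqref{exact.sequence} to be identically zero. Consequently the six-term sequence breaks into two split short exact sequences of $R(G)$-modules,
\begin{equation*}
0 \longrightarrow \Ko_G(U) \stackrel{j_*}{\longrightarrow} \Ko_G(N) \stackrel{r^*}{\longrightarrow} \Ko_G(N\setminus U) \longrightarrow 0,
\end{equation*}
\begin{equation*}
0 \longrightarrow \Kun_G(U) \stackrel{j_*}{\longrightarrow} \Kun_G(N) \stackrel{r^*}{\longrightarrow} \Kun_G(N\setminus U) \longrightarrow 0,
\end{equation*}
in which $j_*$ is injective (again by exactness combined with the vanishing of the other $\delta$).

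Next I would exhibit the isomorphism explicitly. For $x \in \KK_G(N)$, the element $x - S(r^*(x))$ satisfies $r^*(x - S(r^*(x))) = 0$, hence lies in $\ker(r^*) = \mathrm{im}(j_*)$. Because $j_*$ is injective, there is a unique $y \in \KK_G(U)$ with $j_*(y) = x - S(r^*(x))$. The map
\begin{equation*}
x \longmapsto \bigl(\, y,\ r^*(x)\,\bigr) \in \KK_G(U) \oplus \KK_G(N\setminus U)
\end{equation*}
is $R(G)$-linear (since $j_*$, $r^*$ and $S$ all are), and one checks that its inverse is $(y,z) \mapsto j_*(y) + S(z)$. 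Summing the two degrees gives the desired isomorphism of $R(G)$-modules $\KK_G(N) \simeq \KK_G(U) \oplus \KK_G(N\setminus U)$.

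There is no real obstacle here: the statement is a formal consequence of the exactness of \eqref{exact.sequence}, so the argument is essentially diagram-chasing. The only point requiring attention is to apply the surjectivity/injectivity consequences in \emph{both} degrees simultaneously, so that after summing one indeed obtains a splitting of the full $\Zbb/2$-graded group $\KK_G$.
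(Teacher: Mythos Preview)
Your proof is correct and is exactly the argument the paper has in mind: the lemma is stated there as ``a direct consequence of \eqref{exact.sequence}'' with no further details, and your diagram chase is the standard way to unpack that remark.
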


\medskip

We finish this section by considering the case of torus $\Tbb$ belonging to the center of $G$. Let 
$i: \Tbb\croc G$ be the inclusion map. We still denote $i: {\rm Lie}(\Tbb)\to\ggot$ the map of Lie algebra, and 
$i^*:\ggot^*\to  {\rm Lie}(\Tbb)^*$ the dual map. Note that the restriction to $\Tbb$ of an irreducible representation 
$V^G_\lambda$ is isomorphic to $(\Cbb_{i^*(\lambda)})^{p}$ with $p=\dim(V^G_\lambda)$. 
The representation ring $R(G)$ contains as a subring $R(G/ \Tbb)$. At each character $\mu$ of $\Tbb$, we 
associate the $R(G/ \Tbb)$-submodule of $R(G)$ defined by
$$
R(G)_\mu=\sum_{i^*(\lambda)=\mu} \Zbb V^G_\lambda.
$$
Note that $R(G)_0=R(G/ \Tbb)$. 

We have then a grading $R(G)=\bigoplus_{\mu\in\widehat{\Tbb}}R(G)_\mu$ since $R(G)_\mu\cdot R(G)_{\mu'}\subset R(G)_{\mu+\mu'}$. 
If we work now with the $R(G)$-module $\Rgene(G)$, we have also a decomposition\footnote{The sign  $\widehat{\bigoplus}$ means that
one can take infinite sum.} $\Rgene(G)=\widehat{\bigoplus}_{\mu\in\widehat{\Tbb}}\Rgene(G)_\mu$ such that 
$R(G)_\mu\cdot \Rgene(G)_{\mu'}\subset \Rgene(G)_{\mu+\mu'}$.

\medskip

Let us consider now  the case of a $G$-space $N$, connected,  such that the action of the subgroup $\Tbb$ is trivial. Each $G$-equivariant complex vector bundle $\Ecal\to N$ decomposes as a finite sum
\begin{equation}\label{eq:E-mu}
\Ecal=\bigoplus_{\mu\in \Xcal}\Ecal_\mu
\end{equation}
 where $\Ecal_\mu\simeq \hom_\Tbb(\Cbb_\mu,\Ecal)$ is the $G$-sub-bundle where $\Tbb$ acts trough the character $t\mapsto t^\mu$. 
 Note that a $G$-equivariant morphism $\sigma:\Ecal^+\to\Ecal^-$ is equal to the sum of morphisms  
 $\sigma_\mu:\Ecal^+_\mu\to\Ecal^-_\mu$. Hence, at the level of {\bf K}-theory we have also a decomposition
\begin{equation}\label{eq:K-mu}
\KK_G(N)=\bigoplus_{\mu\in \widehat{\Tbb}}\KK_G(N)_\mu
\end{equation}
such that $R(G)_\mu\cdot \KK_G(N)_{\mu'}\subset \KK_G(N)_{\mu+\mu'}$.

\begin{defi}\label{def:hat.K}
We denote $\widehat{\KK_G}^\Tbb(N)$ or simply $\widehat{\KK_G}(N)$ the $R(G)$-module formed by the {\em infinite sum} 
$\sum_{\mu\in \widehat{\Tbb}}\sigma_\mu$ with $\sigma_\mu\in\KK_G(N)_\mu$. When $N=\{\bullet\}$, $\widehat{\Ko_G}(\bullet)=\widehat{R(G)}$ is a $R(G)$-submodule of $\Rgene(G)$.
\end{defi}

\subsection{Index morphism : excision and free action}\label{sec:index}

When $M$ is a compact $G$-manifold, an equivariant morphism $\sigma$ on the cotangent bundle $\T^*M$ is called a symbol on $M$. We denote by $\T^*_G M$ the following subset of $\T^*M$
$$
\T^*_G M:=\{(m,\xi)\in \T^*M\ \vert\ \langle \xi, X_M(m)\rangle=0\  \mathrm{for \ all}\ X\in\ggot\}.
$$
where $X_M(m):=\frac{d}{dt} e^{-tX}\cdot m\vert_{t=0}$ is the vector field generated by the infinitesimal action of $X\in \ggot$. More generally, if $D\subset G$ is a distinguished subgroup, we can consider the $G$-invariant subset 
\begin{equation}\label{eq:T-Z-M}
\T^*_D M\supset \T^*_G M.
\end{equation}

 An elliptic symbol $\sigma$ on $M$ defines an element of $\Ko_G(\T^*M)$, and the index of $\sigma$ is a virtual finite dimensional
representation of $G$ that we denote $\indice_M^G(\sigma)$ \cite{Atiyah-Singer-1,Atiyah-Segal68,Atiyah-Singer-2,Atiyah-Singer-3}. An equivariant symbol $\sigma$ on $M$ is transversally elliptic when $\mathrm{Support}(\sigma)\cap \T^*_G M$ is compact: in this case Atiyah and Singer have shown that its index, still denoted  $\indice_M^G(\sigma)$, is well defined in $R^{-\infty}(G)$ and its depends only of the class $[\sigma]\in \Ko_G(\T^*_G M)$ (see \cite{Atiyah74} for the analytic index and \cite{pep-vergne:bismut} for the cohomological one). It is interesting to look at the index map as a pairing 
\begin{equation}\label{eq:pairing}
\indice_M^G: \Ko_G(\T_{G}^*M)\times \Ko_G(M)\to \Rgene(G).
\end{equation}


\medskip

Let $\sigma$ be a $G_1\times G_2$-equivariant symbol $\sigma$ on a manifold $M$. If $\sigma$ is $G_1$-transversally elliptic it defines a class 
$$
[\sigma]\in \Ko_{G_1\times G_2}(\T_{G_1}^*M), 
$$
and its index is {\em smooth} relatively to $G_2$. It means that $\indice_M^{G_1\times G_2}(\sigma)=\sum_{\mu\in\widehat{G_1}} \theta_\mu \otimes V_\mu^{G_1}$ where $\theta_\mu\in R(G_2)$ for any $\mu$. Hence 
\begin{itemize}
\item the $G_1$-index $\indice_M^{G_1}(\sigma)= \sum_{\mu\in\widehat{G_1}} \dim(\theta_\mu) \otimes V_\mu^{G_1}$ is equal to the restriction of $\indice_M^{G_1\times G_2}(\sigma)$ to $g=1\in G_2$.
\item the product of $\indice_M^{G_1\times G_2}(\sigma)$ with any element $\Theta\in \Rgene(G_1)$ is a well defined element 
$\Theta\cdot \indice_M^{G_1\times G_2}(\sigma)\in \Rgene(G_1\times G_2)$.
\end{itemize}

\begin{rem}\label{rem:indice.mu}
Suppose that a torus $\Tbb$ belonging to the center of $G$ acts trivially on the manifold $M$. 
Since the index map $\indice_M^G$ is a morphism of $R(G)$-module, the pairing (\ref{eq:pairing}) specializes in a map 
from $\Ko_G(\T_{G}^*M)_\mu\times \Ko_G(M)_{\mu'}$ into $\Rgene(G)_{\mu+\mu'}$. Hence on can extend the pairing (\ref{eq:pairing}) to
\begin{equation}\label{eq:pairing.hat}
\indice_M^G: \Ko_G(\T_{G}^*M)\times \widehat{\Ko_G}(M)\to \Rgene(G).
\end{equation}
See Definition \ref{def:hat.K}, for the notation $\widehat{\Ko_G}(M)$.
\end{rem}

Let $U$ be a non-compact $K$-manifold. Lemma 3.6 of \cite{Atiyah74} tell us that, for any open
$K$-embedding $j:U\croc M$ into a compact manifold, we have a push-forward map
$j_{*}:\KK_{G}(\T_{G}^* U)\to \KK_{G}(\T_{G}^* M)$.

Let us rephrase Theorem 3.7 of \cite{Atiyah74}.
\begin{theo}[Excision property]
The composition
$$
\Ko_{G}(\T_{G}^*U)\stackrel{j_{*}}{\longrightarrow} \Ko_{G}(\T_{G}^*M)
\stackrel{\index^{G}_{M}}{\longrightarrow} R^{-\infty}(G)
$$
is independent of the choice of $j:U\croc M$: we denote this map $\index^{G}_{U}$.
\end{theo}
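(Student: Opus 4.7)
The plan is to reduce the statement to a locality property of the analytic index: the index of a transversally elliptic symbol on a compact $G$-manifold depends only on an arbitrarily small $G$-invariant open neighborhood of the symbol support. Given this, excision becomes a matter of matching such local data for two different compactifications.

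Let $j_{k}:U\croc M_{k}$ ($k=1,2$) be two open $G$-embeddings into compact $G$-manifolds and let $\sigma\in\Ko_{G}(\T^{*}_{G}U)$. Choose a representative symbol for $\sigma$ and let $K\subset U$ denote the projection to $U$ of its compact support in $\T^{*}_{G}U$. Pick a relatively compact $G$-invariant open neighborhood $W\subset U$ of $K$. At the level of representatives, the symbol $(j_{k})_{*}\sigma$ on $M_{k}$ is obtained by restricting $\sigma$ to $W$, transporting via the $G$-equivariant diffeomorphism $j_{k}|_{W}:W\simeq j_{k}(W)\subset M_{k}$, and extending by the identity outside $j_{k}(W)$. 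Thus, up to the canonical identification $j_{1}(W)\simeq W\simeq j_{2}(W)$, the two symbols on $M_{1}$ and $M_{2}$ coincide on a $G$-invariant open neighborhood of their supports.

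The key step is the locality statement: if $M$ is a compact $G$-manifold, $V\subset M$ a $G$-invariant open subset, and $\tau\in \Ko_{G}(\T^{*}_{G}M)$ is represented by a symbol supported in a compact subset of $V$, then $\indice^{G}_{M}(\tau)\in R^{-\infty}(G)$ depends only on the $G$-manifold $V$ and on the class of $\tau$ restricted to (a neighborhood of the support in) $V$. This is the content of Atiyah's parametrix construction of the transversally elliptic index, since the trace expressions $\tr(1-QP)$ and $\tr(1-PQ)$ that compute the index are supported in an arbitrarily small neighborhood of the symbol support. Applying this locality to both $M_{1}$ and $M_{2}$ with the common neighborhood $W$ (identified through $j_{1}|_{W}$ and $j_{2}|_{W}$) yields $\indice^{G}_{M_{1}}(j_{1*}\sigma)=\indice^{G}_{M_{2}}(j_{2*}\sigma)$, which is the claim; we then denote this common value by $\indice^{G}_{U}(\sigma)$.

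The main obstacle is the rigorous justification of the locality statement, since this is where the real content of excision resides. In Atiyah's lecture notes it is carried out by a careful analysis of the parametrix for a transversally elliptic operator, showing that the index contributions from regions away from the symbol support cancel. In the cohomological formulation of \cite{pep-vergne:bismut}, the analogous locality follows from the fact that the representative of the equivariant Chern character entering the index formula may be chosen supported arbitrarily close to $\mathrm{Support}(\sigma)\cap \T^{*}_{G}U$. Once locality is granted, the rest of the proof is a formal unpacking of the definitions of $j_{*}$ and of the push-forward at the level of symbols.
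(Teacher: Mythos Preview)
The paper does not give its own proof of this theorem: it simply states ``Let us rephrase Theorem 3.7 of \cite{Atiyah74}'' and quotes the result. So there is no argument in the paper to compare against.

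Your sketch is essentially correct and follows the line of Atiyah's original proof in \cite{Atiyah74}. You correctly identify that the entire content lies in the locality statement for the transversally elliptic index, and that once this is granted the rest is bookkeeping with the push-forward maps $j_{k*}$. Your description of how the two pushed-forward symbols are matched on a common neighborhood $W$ of the support is accurate, and your pointer to the parametrix construction (for the analytic side) and to the support properties of the Chern character representative (for the cohomological side, as in \cite{pep-vergne:bismut}) is the right justification. One small caution: in the transversally elliptic setting the operators $1-QP$ and $1-PQ$ are not trace-class in the ordinary sense, so the phrase ``the trace expressions $\tr(1-QP)$ and $\tr(1-PQ)$'' should be read as the distributional traces on $G$ that Atiyah constructs; the locality of \emph{these} distributions near the characteristic set is what is actually proved in \cite{Atiyah74}. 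With that adjustment your argument is sound.
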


Note that a {\em relatively compact} $G$-invariant open subset $U$ of a
$G$-manifold admits an open $G$-embedding $j:U\croc M$ into a compact
$G$-manifold. So the index map $\index^{G}_{U}$ is defined in this case.
Another important example is when $U\to N$ is a $G$-equivariant vector bundle
over a compact manifold $N$ : we can imbed $U$ as an open subset
of the real projective bundle $\mathbb{P}(U\oplus \Rbb)$.

\medskip

Let $K$ be another compact Lie group. Let $P$ be a compact manifold provided with an action of
$K\times G$. We assume that the action of $K$ is free. Then the manifold $M:=P/K$ is provided with an
action of $G$ and the quotient map $q:P\to M$ is $G$-equivariant. Note that we have the natural
identification of
$\T^*_K P$ with $q^*\T^* M$, hence $(\T^*_K P)/K\simeq \T^*M$ and more generally
$$
(\T^*_{K\times G} P)/K\simeq \T_G^*M.
$$
This isomorphism induces an isomorphism
$$
Q^*: \Ko_{G}(\T^*_{G} M)\to \Ko_{K\times G}(\T^*_{K\times G} P).
$$

The following theorem  was obtained by Atiyah-Singer in \cite{Atiyah74}. For any $\Theta\in \Rgene(K\times G)$, we denote 
$[\Theta]^K\in \Rgene(G)$ its $K$-invariant part.

\begin{theo}[Free action property] \label{theo:free.action}
 For any $[\sigma]\in \Ko_{G}(\T^*_{G} M)$, we have the following equality in
$\Rgene(K)$: 
$$
\left[\index^{K\times G}_P(Q^*[\sigma])\right]^K=\index^{G}_M([\sigma]).
$$
\end{theo}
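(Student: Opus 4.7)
The plan is to represent $\sigma$ by an analytic operator on $P$ and then identify its $K$-invariant part with an operator on $M$. Choose a principal $K$-connection on $q:P\to M$, and a $G$-invariant pseudo-differential operator $D$ on $M$, acting between sections of $E^\pm$, whose principal symbol is $\sigma$. Lift $D$ to a $(K\times G)$-invariant pseudo-differential operator $\widetilde D$ on $P$ acting between sections of $Q^*E^\pm$, whose principal symbol represents $Q^*[\sigma]$, and which acts along horizontal directions as the pullback of $D$. Such a $\widetilde D$ is built by pulling back $D$ through the connection in local trivializations of the principal bundle, gluing with a partition of unity, and averaging over $K$ (which preserves the already existing $G$-invariance because the two actions commute).

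Next, one checks that $\widetilde D$ is $(K\times G)$-transversally elliptic. The identification $\T^*_{K\times G}P \simeq q^*(\T^*_G M)$, which follows because the vectors killing the infinitesimal action of $\mathrm{Lie}(K\times G)$ are exactly the horizontal pullbacks of elements of $\T^*_G M$, gives
$$
\mathrm{Support}(Q^*\sigma)\cap \T^*_{K\times G}P \;=\; q^{-1}\bigl(\mathrm{Support}(\sigma)\cap \T^*_G M\bigr).
$$
This set is compact since $q$ is proper (its fibers are copies of the compact group $K$) and $\sigma$ is $G$-transversally elliptic.

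The key analytic input is the identification of the $K$-invariant part of $\index^{K\times G}_P(\widetilde D)$. Freeness of the $K$-action yields, via $q^*$, an isomorphism
$$
\Gamma(M,E^\pm) \;\stackrel{\sim}{\longrightarrow}\; \Gamma(P,Q^*E^\pm)^K,
$$
and by the horizontal construction this intertwines $D$ with the restriction of $\widetilde D$ to $K$-invariant sections. Since $\widetilde D$ is $(K\times G)$-transversally elliptic, both $\ker\widetilde D$ and $\mathrm{coker}\,\widetilde D$ are smooth $K$-modules in the sense of Atiyah, so every $K$-isotypic component is finite-dimensional; in particular the $K$-invariant parts $(\ker\widetilde D)^K$ and $(\mathrm{coker}\,\widetilde D)^K$ are finite-dimensional $G$-modules, equal respectively to $\ker D$ and $\mathrm{coker}\,D$. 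Extracting $K$-invariants of the character-valued $(K\times G)$-index then gives
$$
\bigl[\index^{K\times G}_P(Q^*[\sigma])\bigr]^K = \ker D - \mathrm{coker}\,D = \index^G_M([\sigma]),
$$
as an identity in $\Rgene(G)$.

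The main obstacle is the first step, namely the construction of $\widetilde D$ satisfying simultaneously three properties: having principal symbol $Q^*\sigma$, being $(K\times G)$-equivariant, and being compatible with a chosen downstairs representative $D$ of $\sigma$. One handles this by a partition of unity argument in local trivializations of $q:P\to M$, pulling back local expressions of $D$ through the connection, patching, and averaging over $K$. Once $\widetilde D$ is in hand, the descent of bundles and sections under a free action makes the identification of $K$-invariants essentially tautological, while transversal ellipticity provides the finiteness of $K$-multiplicities required to realize $\ker D$ and $\mathrm{coker}\,D$ as finite-dimensional virtual $G$-representations.
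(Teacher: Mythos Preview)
The paper does not give its own proof of this theorem; it simply attributes the result to Atiyah--Singer and cites \cite{Atiyah74}. So there is no ``paper's proof'' to compare against beyond Atiyah's original argument, and your strategy---lift a representing operator through the principal bundle and read off the $K$-invariant part---is exactly the classical one.

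That said, there is a genuine gap in your analytic step. You write that because $\widetilde D$ is $(K\times G)$-transversally elliptic, ``every $K$-isotypic component is finite-dimensional'', and then conclude that $(\ker\widetilde D)^K$ and $(\mathrm{coker}\,\widetilde D)^K$ are finite-dimensional $G$-modules. This is false in general: $(K\times G)$-transversal ellipticity only guarantees that each \emph{$(K\times G)$}-isotypic component has finite index, not each $K$-isotypic component. Indeed $(\ker\widetilde D)^K\simeq\ker D$, and $D$ is merely $G$-transversally elliptic on $M$, so $\ker D$ is typically infinite-dimensional. For the same reason, the expression ``$\ker D-\mathrm{coker}\,D$'' is not literally an element of $R^{-\infty}(G)$; the transversally elliptic index is defined multiplicity-by-multiplicity (or as a distributional character), not as a formal difference of possibly infinite-dimensional spaces.

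The fix is to run your identification one $(K\times G)$-isotype at a time. For each irreducible $V^G_\mu$, the $(\mathrm{triv}_K,\mu)$-isotypic subspace of $\Gamma(P,Q^*E^\pm)$ is, via $q^*$, exactly the $\mu$-isotypic subspace of $\Gamma(M,E^\pm)$, and on these subspaces $\widetilde D$ and $D$ agree and are Fredholm. Hence the $(\mathrm{triv}_K,\mu)$-multiplicity of $\index^{K\times G}_P(\widetilde D)$ equals the $\mu$-multiplicity of $\index^G_M(D)$, which is precisely the asserted equality in $R^{-\infty}(G)$. Everything else in your outline (construction of $\widetilde D$, the support computation via $\T^*_{K\times G}P\simeq q^*\T^*_G M$) is fine.
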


\subsection{Product }\label{sec:product}

Suppose that we have two $G$-locally compact topological spaces  $N_k, k=1,2$. For $j\in \Zbb/2\Zbb$, we have a product 
\begin{equation}\label{product=odot}
\odot_{ext}: \mathbf{K}_G^{0}(N_1)\times  \KK_G(N_2)\longrightarrow  \KK_G(N_1\times N_2)
\end{equation}
which is defined as follows \cite{Atiyah74}.  Suppose first that $*=0$. For $k=1,2$, let $\sigma_k: E^+_k\to E^-_k$ be a morphism on $N_k$. Let $E^\pm$ be the vector bundles on 
$N_1\times N_2$ defined as $E^+= E_1^+\otimes E_2^+ \bigoplus  E_1^-\otimes E_2^-$ and 
$E^-= E_1^-\otimes E_2^+ \bigoplus  E_1^+\otimes E_2^-$.  On $N_1\times N_2$, the morphism
$\sigma_1\odot_{ext}\sigma_2: E^+\to E^-$, is defined by the matrix
$$
\sigma_1\odot_{ext}\sigma_2(a,b)=
\left(\begin{array}{cc} 
\sigma_1(a)\otimes Id &  -Id \otimes \sigma_2(b)^* \\ 
Id \otimes \sigma_2(b)& \sigma_1(a)^*\otimes Id 
\end{array}\right).
$$
for $(a,b)\in N_1\times N_2$. Note that $\mathrm{Support}(\sigma_1\odot_{ext}\sigma_2)=\mathrm{Support}(\sigma_1)\times\mathrm{Support}(\sigma_2)$. Hence the product $\sigma_1\odot_{ext}\sigma_2$ is elliptic when each $\sigma_k$ is elliptic, and the product $[\sigma_1]\odot_{ext}[\sigma_2]$ is defined as the class $[\sigma_1\odot_{ext}\sigma_2]$. When $*=1$, we make the same construction with the spaces $N_1$ and $N_2\times \Rbb$. 

Two particular cases of this product are noteworthy:

 - When $N_1=N_2=N$, the inner product on $\Ko_G(N)$ is  defined as $a\odot b=\Delta^*(a\odot_{ext}b)$, where $\Delta^*: \Ko_G(N\times N)\to\Ko_G(N)$ is the restriction morphism associated to the diagonal mapping $\Delta:N\to N\times N$. 
 
 - The structure of $R(G)$-module of $\KK_G(N_2)$ can be understood as a particular case of the exterior product, when 
$N_1$ is reduced to a point.

\medskip


Let us recall the multiplicative property of the index for the product
of manifolds. Consider a compact Lie group $G_2$ acting on two manifolds $M_1$ and $M_2$, 
and assume that another compact Lie group $G_1$ acts on $M_1$ commuting with the action of $G_2$. 
The external product of complexes on $\T^*M_1$ and $\T^*M_2$ induces
a multiplication (see (\ref{product=odot})):
$$
\odot_{ext}:\Ko_{G_1\times G_2}(\T^*_{G_1} M_1)\times \KK_{G_2}(\T^*_{G_2} M_2)
\longrightarrow\KK_{G_1\times G_2}(\T^*_{G_1\times G_2} (M_1\times M_2)).
$$

\medskip

Since $\T^*_{G_1\times G_2}(M_1\times M_2)\neq \T^*_{G_1} M_1\times\T^*_{G_2} M_2$ in general, the product 
$[\sigma_1]\odot_{\rm ext}[\sigma_2]$ of transversally elliptic symbols need some care: we have to take representative 
$\sigma_2$ that are almost homogeneous (see Lemma 4.9 in \cite{pep-vergneIII}).

\begin{theo}[Multiplicative property] \label{theo:multiplicative-property}
For any $[\sigma_1]\in \Ko_{G_1\times G_2}(\T^*_{G_1} M_1)$ and
any $[\sigma_2]\in\Ko_{G_2}(\T^*_{G_2} M_2)$ we have
$$
\index^{G_1\times G_2}_{M_1\times M_2}([\sigma_1]\odot_{\rm ext}[\sigma_2])
=\index^{G_1\times G_2}_{M_1}([\sigma_1])\index^{G_2}_{M_2}([\sigma_2]).
$$
\end{theo}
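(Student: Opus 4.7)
The plan is to reduce the multiplicative formula to the classical Atiyah--Singer multiplicativity for ordinary elliptic operators, using the free-action property (Theorem \ref{theo:free.action}) and excision as the main tools.

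First I would verify that the left-hand side is well-defined. The product $\sigma_1 \odot_{\rm ext} \sigma_2$ is not automatically $G_1 \times G_2$-transversally elliptic on $M_1 \times M_2$, because $\T^*_{G_1 \times G_2}(M_1 \times M_2)$ is strictly larger than $\T^*_{G_1} M_1 \times \T^*_{G_2} M_2$: the diagonal action of $G_2$ ties the two cotangent components by an affine relation $\langle \xi_1, X_{2,M_1}(m_1)\rangle + \langle \xi_2, X_{2,M_2}(m_2)\rangle = 0$ for $X_2 \in \ggot_2$. Replacing $\sigma_2$ by an almost-homogeneous representative via Lemma 4.9 of \cite{pep-vergneIII} forces $\supp(\sigma_2)$ to sit in an arbitrarily small conical neighborhood of the zero section, so that its intersection with these affine translates of $\T^*_{G_2} M_2$ remains compact.

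Next I would use the free-action property to turn the transversally elliptic picture into an ordinary elliptic one. Following Atiyah, choose compact manifolds $P_i$ carrying free actions of auxiliary compact groups $K_i$ with $M_i = P_i/K_i$, and pull each symbol back through the quotient isomorphism $Q^*$. For a suitable localized choice of $P_i$ (localized via excision near $\supp(\sigma_i) \cap \T^*_{G_i} M_i$, and possibly after the embedding trick from Section \ref{sec:index}), the lift $Q^*\sigma_i$ becomes genuinely elliptic on the quotient. The classical multiplicative axiom for the equivariant elliptic index then yields
$$
\index^{K_1 \times K_2 \times G_1 \times G_2}_{P_1 \times P_2}\!\bigl(Q^*\sigma_1 \odot_{\rm ext} Q^*\sigma_2\bigr) \;=\; \index^{K_1 \times G_1 \times G_2}_{P_1}(Q^*\sigma_1) \cdot \index^{K_2 \times G_2}_{P_2}(Q^*\sigma_2).
$$
Taking $K_1 \times K_2$-invariants via Theorem \ref{theo:free.action}, and using that the invariants operation is multiplicative when one factor is smooth in the relevant variables (as in the discussion preceding Remark \ref{rem:indice.mu}), recovers the desired identity in $\Rgene(G_1 \times G_2)$.

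The step I expect to be the main obstacle is the reduction to the elliptic case: constructing $(P_i, K_i)$ globally is not feasible in general. The remedy is to proceed locally via excision near the compact subsets $\supp(\sigma_i) \cap \T^*_{G_i} M_i$, where standard frame-bundle constructions produce the required free cover; the global identity then follows from the independence of the index on the open embedding provided by the excision theorem.
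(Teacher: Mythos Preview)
The paper does not prove Theorem \ref{theo:multiplicative-property}; it is stated with the phrase ``Let us recall the multiplicative property'' and then used as a black box, the reference being Atiyah's lecture notes \cite{Atiyah74} (and \cite{pep-vergne:bismut} for the cohomological index). So there is no proof in the paper to compare your attempt against.

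That said, your proposed reduction has a genuine gap. The free-action isomorphism $Q^*$ sends $\Ko_G(\T^*_G M)$ to $\Ko_{K\times G}(\T^*_{K\times G} P)$: a $G$-transversally elliptic class becomes a $K\times G$-transversally elliptic class, \emph{not} an elliptic one. The free $K$-action absorbs the $K$-transversal directions, but the $G$-directions remain exactly as non-elliptic as before. So the sentence ``the lift $Q^*\sigma_i$ becomes genuinely elliptic on the quotient'' is false (and also confusingly phrased, since $Q^*\sigma_i$ lives on $P_i$, not on any quotient). Consequently you never reach a situation where the classical elliptic multiplicativity applies, and the argument does not close.

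The actual proof in \cite{Atiyah74} is analytic: one represents $\sigma_1,\sigma_2$ by pseudodifferential operators $A_1,A_2$, forms the exterior tensor product operator on $M_1\times M_2$, and checks directly that the distributional index of the product is the product of the distributional indices (using that $\index^{G_1\times G_2}_{M_1}(\sigma_1)$ is smooth in $G_2$, which is exactly the point emphasized just after the theorem in the present paper). No reduction to the elliptic case is made. If you want a $K$-theoretic route instead, the standard move is to push $\sigma_2$ forward into a $G_2$-module via $i_!$ and reduce to multiplicativity on a linear model, but this still requires an independent computation on the vector space and does not follow from the elliptic axiom alone.
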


In the last theorem, the product of  $\index^{G_1\times G_2}_{M_1}([\sigma_1])\in \Rgene(G_1\times G_2)$ and 
$\index^{G_2}_{M_2}([\sigma_2])\in \Rgene(G_2)$ is well defined since $\index^{G_1\times G_2}_{M_1}([\sigma_1])$ 
is {\em smooth} relatively to $G_2$ (see Section \ref{sec:index}).

\medskip

Suppose now that $G$ is {\bf abelian}. For a generalized character $\Phi\in R^{-\infty}(G)$, we consider its support $\supp(\Phi)\subset\widehat{G}$ and the corresponding subset  $\overline{\supp(\Phi)}\subset\ggot^*$ formed by the differentials. 

Let $\agot\subset \ggot$ a rational\footnote{A subspace $\agot\subset \ggot$ is rational when it is the Lie algebra of a closed subgroup.} subspace, and let $\pi_\agot:\ggot^*\to \agot^*$ be the projection.  We will be interested to the $K$-groups $\KK_{G}(\T^*_{\agot} M)$ associated to the $G$-spaces $\T^*_\agot M:=\{(m,\xi)\in \T^*M\ \vert\ \langle \xi, X_M(m)\rangle=0\  \mathrm{for \ all}\ X\in\agot\}$.  
We can prove that if $\sigma\in \Ko_{G}(\T^*_{\agot} M)$, then its index $\Phi:=\indice^M_G(\sigma)\in R^{-\infty}(G)$ has the following property : the projection $\pi_\agot$, when restricted to $\overline{\supp(\Phi)}$, is {\em proper} (see \cite{B-V.inventiones.96.2}).

We have another version of Theorem \ref{theo:multiplicative-property}.

\begin{theo}[Multiplicative property - Abelian case] \label{theo:multiplicative-property-abelian}
Let $M_1$ and $M_2$ be two $G$-manifolds (with $G$ abelian), and let $\agot_1,\agot_2$ be two rationnal subspaces of $\ggot$ 
such that $\agot_1\cap\agot_2=\{0\}$. If the infinitesimal action of 
$\agot_1$ is {\bf trivial} on $M_2$, we have an external product
$$
\odot_{ext}:\Ko_{G}(\T^*_{\agot_1} M_1)\times \KK_{G}(\T^*_{\agot_2} M_2)
\longrightarrow\KK_{G}(\T^*_{\agot_1\oplus \agot_2} (M_1\times M_2)),
$$
and for any $[\sigma_k]\in \Ko_{G}(\T^*_{\agot_k} M_k)$  we have
$$
\index^{G}_{M_1\times M_2}([\sigma_1]\odot_{\rm ext}[\sigma_2])
=\index^{G}_{M_1}([\sigma_1])\index^{G}_{M_2}([\sigma_2]).
$$
\end{theo}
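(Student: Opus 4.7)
The plan is to (1) exhibit the external product as a well-defined class in $\Ko_G(\T^*_{\agot_1\oplus\agot_2}(M_1\times M_2))$, and then (2) deduce the multiplicative formula from Theorem~\ref{theo:multiplicative-property} via an enlargement of the acting group.

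\emph{Step 1 (Defining the product).} Let $S_k:=\supp(\sigma_k)\subset\T^*M_k$; by hypothesis $S_k\cap\T^*_{\agot_k}M_k$ is compact, and the support of $\sigma_1\odot_{ext}\sigma_2$ equals $S_1\times S_2$. Using that $\agot_1$ acts trivially on $M_2$, a direct computation shows that
\[
\T^*_{\agot_1\oplus\agot_2}(M_1\times M_2)=\Bigl\{((m_1,\xi_1),(m_2,\xi_2)):(m_1,\xi_1)\in\T^*_{\agot_1}M_1,\ (\star)\Bigr\},
\]
where $(\star)$ is the coupled condition $\langle\xi_1,X_{M_1}(m_1)\rangle+\langle\xi_2,X_{M_2}(m_2)\rangle=0$ for all $X\in\agot_2$. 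The first condition forces $(m_1,\xi_1)$ into the compact set $S_1\cap\T^*_{\agot_1}M_1$, on which $X\mapsto X_{M_1}(m_1)$ is uniformly bounded. Condition $(\star)$ then confines $(m_2,\xi_2)\in S_2$ to a bounded $\agot_2$-neighborhood of $\T^*_{\agot_2}M_2$. To upgrade "bounded neighborhood" to an honest compact set, one replaces $\sigma_2$ by an almost homogeneous representative, positively homogeneous of degree zero in the cotangent fibre outside a compact neighborhood of the zero section, as in Lemma~4.9 of \cite{pep-vergneIII}; conicity then compactifies the intersection. Independence of representative follows by a standard homotopy argument using (\ref{exact.sequence}).

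\emph{Step 2 (Multiplicative identity).} Let $A_1:=\exp(\agot_1)\subset G$. Since $\agot_1$ acts trivially on $M_2$, we may define an $(A_1\times G)$-action on $M_1\times M_2$ by $(a,g)\cdot(m_1,m_2):=(ag\cdot m_1,\, g\cdot m_2)$; restriction to $\{1\}\times G$ recovers the diagonal $G$-action. Under this enlargement, $\sigma_1$ lifts to a class in $\Ko_{A_1\times G}(\T^*_{A_1}M_1)$ (the new $A_1$-orbits on $M_1$ coincide infinitesimally with the $\agot_1$-orbits, so $\agot_1$-transversal ellipticity becomes $A_1$-transversal ellipticity). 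Moreover $\sigma_2\in\Ko_G(\T^*_G M_2)$ because $\agot_2\subset\ggot$ makes $S_2\cap\T^*_G M_2\subset S_2\cap\T^*_{\agot_2}M_2$ compact. A direct check shows $\T^*_{A_1\times G}(M_1\times M_2)\subset\T^*_{\agot_1\oplus\agot_2}(M_1\times M_2)$, so the product from Step~1 also defines an element of $\Ko_{A_1\times G}(\T^*_{A_1\times G}(M_1\times M_2))$. Theorem~\ref{theo:multiplicative-property} applied with $G_1=A_1$ and $G_2=G$ then yields
\[
\index^{A_1\times G}_{M_1\times M_2}(\sigma_1\odot_{ext}\sigma_2)=\index^{A_1\times G}_{M_1}(\sigma_1)\cdot\index^G_{M_2}(\sigma_2),
\]
and restricting both sides to $\{1\}\times G\cong G$ yields the stated identity for the original $G$-action.

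\emph{Main obstacle.} The principal difficulty is Step~1: the coupled condition $(\star)$ obstructs naive compactification, and one must invoke the almost homogeneous representative technology of \cite{pep-vergneIII} to handle the unbounded perturbation produced by $\agot_2$ acting nontrivially on $M_1$. Step~2 is essentially bookkeeping once the product is constructed; the key conceptual input is that the triviality of $\agot_1$ on $M_2$ is exactly what permits the group enlargement that reduces the abelian statement to the already-established non-abelian multiplicative property.
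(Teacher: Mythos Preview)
The paper does not give a proof of Theorem~\ref{theo:multiplicative-property-abelian}; immediately after the statement it only explains why the product $\index^{G}_{M_1}(\sigma_1)\cdot\index^{G}_{M_2}(\sigma_2)$ of two generalized characters makes sense in $R^{-\infty}(G)$, by a support argument (properness of $\pi_{\agot_1}$ on $\overline{\supp(\Phi_1)}$ together with finiteness of $\pi_{\agot_1}(\overline{\supp(\Phi_2)})$). The actual multiplicative identity is left implicit, so there is no detailed argument in the paper to compare your proposal to.

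Your two-step strategy is the natural one and is essentially what the paper has in mind: Step~1 is precisely the construction alluded to in the paragraph preceding Theorem~\ref{theo:multiplicative-property} (almost homogeneous representatives, Lemma~4.9 of \cite{pep-vergneIII}), and Step~2 reduces the abelian statement to Theorem~\ref{theo:multiplicative-property} via the enlargement $G\rightsquigarrow A_1\times G$. One point, however, is glossed over and is in fact the crux of the matter. When you write ``restricting both sides to $\{1\}\times G$ yields the stated identity'', you are using two facts that are \emph{not} automatic for generalized characters: (i) that $\index^{A_1\times G}(\tau)|_{\{1\}\times G}=\index^{G}(\tau)$ whenever $\tau$ is $G$-transversally elliptic (the paper only records the restriction to $G_1\times\{1\}$, not to $\{1\}\times G_2$), and (ii) that restricting the product $\index^{A_1\times G}_{M_1}(\sigma_1)\cdot\index^{G}_{M_2}(\sigma_2)$ to $a=1$ gives a well-defined product in $R^{-\infty}(G)$. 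Point~(ii) is exactly the support computation the paper carries out after the theorem, and it does not follow from your Step~2 alone; you should invoke it explicitly. Point~(i) is standard naturality of the analytic index (Atiyah \cite{Atiyah74}), but since the restriction of an element of $R^{-\infty}(A_1\times G)$ to a subgroup is not defined in general, you should say why the $G$-transversal ellipticity established in Step~1 guarantees it here.
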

 
Let us briefly explain why the product of the generalized characters $\Phi_k:=\index^{G}_{M_k}([\sigma_k])\in \Rgene(G)$ is 
well-defined. We know that the projection $\pi_k:\ggot^*\to \agot_k^*$ is proper when restricted to the infinitesimal 
support $\overline{\supp(\Phi_k)}\subset\ggot^*$. Since the infinitesimal action of $\agot_1$ is trivial on $M_2$, 
we know also that the image of $\overline{\supp(\Phi_2)}$ by $\pi_1$ is finite (see Remark \ref{rem:indice-H-trivial}). 
These three facts insure that for any $\chi \in \widehat{G}$ the set 
$\{(\chi_1,\chi_2)\in \supp(\Phi_1)\times \supp(\Phi_2)\ \vert \chi_1+\chi_2=\chi \}$
 is finite. Hence we can define the product $\Phi_1\otimes\Phi_2$ as the restriction of 
 $(\Phi_1,\Phi_2)\in R^{-\infty}(G\times G)$ to the diagonal.
 
\medskip

\begin{rem}\label{rem:indice-H-trivial}
Consider an action of a compact {\bf abelian} Lie group $G$ on a manifold $M$. Suppose that a torus subgroup 
$H\subset G$ acts {\bf trivially} on $M$. Let $H'$ be a closed subgroup of $G$ such that $G\simeq H\times H'$. 
In this case we have an isomorphism $\KK_{G}(\T^*_{G} M)\simeq R(H)\otimes \KK_{H'}(\T^*_{H'} M)$
and we see that the index map sends $\Ko_{G}(\T^*_{G} M)$ into $R(H)\otimes R^{-\infty}(H')\simeq 
\langle R^{-\infty}(G/H)\rangle$.  
See the introduction where the submodule $\langle R^{-\infty}(G/H)\rangle$ is defined without using a decomposition 
$G\simeq H\times H'$.
\end{rem}
 
\medskip

\subsection{Direct images and Bott symbols}\label{section:direct.image}

Let $\pi:\Ecal\to N$ be a $G$-equivariant  complex vector bundle. We define the Bott morphism on $\Ecal$ 
$$
\bott(\Ecal):\wedge^+\pi^*\Ecal\to\wedge^- \pi^*\Ecal,
$$
by the relation $\bott(\Ecal)(n,v)=\Clif(v):\wedge^+\Ecal_{n}\to\wedge^- \Ecal_{n}$. Here the Clifford map is defined after the choice of a
$G$-invariant Hermitian product on $\Ecal$. 

Let $s:N\to \Ecal$ be the $0$-section map. Since the support of $\bott(\Ecal)$ is the zero section, we have a push-forward morphism 
\begin{eqnarray}\label{thom.iso}
s_!: \KK_G(N)&\longrightarrow& \KK_G(\Ecal)\\
\sigma &\longmapsto& \bott(\Ecal)\odot_{\rm ext} \pi^*(\sigma) \nonumber
\end{eqnarray}
which is bijective: it is the Bott-Thom isomorphism \cite{Segal68}.

\medskip

Consider now an Euclidean vector space $V$. Then its complexification $V_\Cbb$ is an Hermitian
vector space. The cotangent bundle $\T^*V$ is identified with $V_\Cbb$: we associate
to the covector $\xi\in \T^*_v V$ the element $v+i\hat{\xi}\in V_\Cbb$, where
$\xi\in V^*\to\hat{\xi}\in V$ is the identification given by the Euclidean structure.

Then $\bott(V_\Cbb)$ defines an elliptic symbol on $V$ which is equivariant relative to the action of
the orthogonal group $O(V)$.  Its analytic index is computed in \cite{Atiyah74}. We have the equality
\begin{equation}\label{eq.indice.bott}
  \index^{O(V)}_V(\bott(V_\Cbb))=1
 \end{equation}
in $R(O(V))$.

Let $\pi : \Vcal\to M$ be a $G$-equivariant {\em real} vector bundle over a compact manifold. We have the fundamental fact
\begin{prop}\label{prop.push.vector.bundle}
We  have a push-forward morphism 
\begin{equation}\label{thom.tangent}
s_!: \KK_G(\T^*_G M)\longrightarrow \KK_G(\T^*_G \Vcal)
\end{equation}
such that $\index^G_{\Vcal}\circ\ s_! =\index^G_{M}$ on $\Ko_G(\T^*_G M)$.
\end{prop}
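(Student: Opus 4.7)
The idea is to construct $s_!$ by multiplying with a fiberwise Bott symbol on $\Vcal$, and to deduce the index identity from the free action and multiplicative properties already recorded.

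First I would fix a $G$-invariant Euclidean metric and a $G$-invariant connection on $\pi:\Vcal\to M$, producing a $G$-equivariant splitting $\T^*\Vcal\cong \pi^*\T^*M\oplus \pi^*\Vcal^{*}$ into horizontal and vertical parts. For $(v,\xi)\in \T^*\Vcal$ with $m=\pi(v)$, decompose $\xi=\xi_H+\xi_V$, and let $\hat\xi_V\in \Vcal_m$ denote the vector dual to $\xi_V$ via the metric. Define a relative Bott morphism on $\T^*\Vcal$ by
$$
\mathbf{b}(v,\xi):=\clif\!\left(v+i\hat\xi_V\right):\wedge^{+}\Vcal_{m,\Cbb}\longrightarrow \wedge^{-}\Vcal_{m,\Cbb},
$$
which is invertible unless $v=0$ and $\xi_V=0$. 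Given $\sigma:E^+\to E^-$ on $\T^*M$ representing a class in $\Ko_G(\T^*_G M)$, let $\tilde\sigma$ be the symbol on $\T^*\Vcal$ obtained by pullback along the horizontal projection, and set
$$
s_!([\sigma]):=[\tilde\sigma\odot \mathbf{b}]
$$
using the inner product of symbols on $\T^*\Vcal$.

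A short calculation gives $\mathrm{Support}(\tilde\sigma)\cap\mathrm{Support}(\mathbf{b})\cong \mathrm{Support}(\sigma)$, embedded in $\T^*\Vcal$ as the set of points with $v=0$ and $\xi_V=0$. At such points $v=0$, the $G$-orbit through $(m,0)$ sits in the zero section and its tangent space coincides with $\ggot\cdot m\subset \T_m M$; hence $\mathrm{Support}(s_!([\sigma]))\cap \T^*_G\Vcal\cong \mathrm{Support}(\sigma)\cap\T^*_G M$, which is compact by transversal ellipticity of $\sigma$. Thus $s_!([\sigma])\in \Ko_G(\T^*_G\Vcal)$ is well defined, and $R(G)$-linearity is immediate from the construction.

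For the identity $\index^G_\Vcal\circ s_!=\index^G_M$, let $d=\mathrm{rank}(\Vcal)$ and let $Q:P\to M$ be the principal $O(d)$-bundle of orthonormal frames of $\Vcal$, so that $\Vcal\cong P\times_{O(d)}V$ for $V=\Rbb^d$. Writing $\tilde Q:P\times V\to \Vcal$ for the quotient map, a direct check using a local orthonormal frame shows
$$
\tilde Q^{*}s_!([\sigma])=Q^{*}[\sigma]\odot_{\mathrm{ext}}[\bott(V_\Cbb)]
$$
in $\Ko_{G\times O(d)}(\T^*_{G\times O(d)}(P\times V))$. Applying Theorem \ref{theo:multiplicative-property} with $G_1=O(d)$, $G_2=G$, $M_1=P$, $M_2=V$ together with $\index^{O(d)}_V(\bott(V_\Cbb))=1$ from (\ref{eq.indice.bott}), and then the free action property (Theorem \ref{theo:free.action}) applied once to $P\times V\to \Vcal$ and once to $P\to M$, yields $\index^G_\Vcal(s_![\sigma])=\index^G_M([\sigma])$. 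The main obstacle is the compatibility $\tilde Q^{*}s_!([\sigma])=Q^{*}[\sigma]\odot_{\mathrm{ext}}[\bott(V_\Cbb)]$: it requires unwinding how the connection, the metric, and the Clifford multiplication on $\Vcal$ become trivial data when pulled back to the principal frame bundle, so that the internal product on $\T^*\Vcal$ really becomes an external product on $\T^*P\times \T^*V$.
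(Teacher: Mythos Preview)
Your approach is correct in spirit and uses the same three ingredients as the paper (the frame bundle, the multiplicative property, and the free action property together with $\index^{O(V)}_V(\bott(V_\Cbb))=1$), but the route is organized differently. The paper never introduces a connection or a direct symbol on $\T^*\Vcal$: it \emph{defines} $s_!$ by transporting through the frame bundle, i.e.\ $s_!:=(Q_2^*)^{-1}\circ s'_!\circ Q_1^*$ where $s'_!(\sigma)=\bott(V_\Cbb)\odot_{\rm ext}\sigma$ on $P\times V$. This makes the compatibility $\tilde Q^*s_!([\sigma])=Q^*[\sigma]\odot_{\rm ext}[\bott(V_\Cbb)]$ a tautology rather than something to verify, so the ``main obstacle'' you flag simply disappears. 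Your more geometric definition via a connection-induced splitting is perfectly legitimate and has the advantage of giving an explicit representative on $\T^*\Vcal$, at the cost of the extra verification you describe.

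One genuine bookkeeping issue: your invocation of Theorem~\ref{theo:multiplicative-property} with $G_1=O(d)$, $G_2=G$, $M_1=P$, $M_2=V$ does not type-check. With that labeling the first factor of the product lives in $\Ko_{G\times O(d)}(\T^*_{O(d)}P)$, but $Q^*[\sigma]$ only lies in $\Ko_{G\times O(d)}(\T^*_{G\times O(d)}P)$, and $G\times O(d)$-transversal ellipticity is weaker than $O(d)$-transversal ellipticity. The correct assignment (the one the paper uses) is $G_1=\{1\}$, $G_2=G\times O(d)$, $M_1=V$, $M_2=P$: then $\bott(V_\Cbb)\in\Ko_{G\times O(d)}(\T^*V)$ (with $G$ acting trivially on $V$), $Q^*[\sigma]\in\Ko_{G\times O(d)}(\T^*_{G\times O(d)}P)$, and the theorem applies directly. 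With this fix your argument goes through.
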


\begin{proof} 
We fix a $G$-invariant euclidean structure on $\Vcal$. Let $n={\rm rank}\ \Vcal$. Let $P$ be the associated orthogonal frame bundle. We have $M=P/O$ and $\Vcal=P\times_{O} V$ where $V=\Rbb^n$ and $O$ is the orthogonal group of $V$.  For the cotangent bundle we have canonical isomorphisms 
$$
\T^*_G M\simeq \T^*_{G\times O}(P/O)\quad \mathrm{and}\quad \T^*_G\Vcal\simeq \T^*_{G\times O}(P\times V)/O
$$
which induces isomorphisms between $\mathbf{K}$-groups
\begin{eqnarray*}
Q^*_1: \KK_G(\T^*_G M)&\longrightarrow&\KK_{G\times O}(\T^*_{G\times O} P),\\
Q^*_2: \KK_G(\T^*_G \Vcal) &\longrightarrow&\KK_{G\times O}(\T^*_{G\times O} (P\times V)).
 \end{eqnarray*}
 
 Let us use the multiplicative property (see Section \ref{sec:product}) with the groups $G_2=G\times O, G_1=\{1\}$ and the manifolds
 $M_1=V, M_2= P$. We have a map
 \begin{eqnarray}\label{thom.transversally.bott}
s'_!:  \KK_{G\times O}(\T^*_{G\times O} P)&\longrightarrow&  \KK_{G\times O}(\T^*_{G\times O} (P\times V))\\
\sigma &\longmapsto& \bott(V_\Cbb)\odot_{\rm ext} \sigma \nonumber
\end{eqnarray}

The map $s_!: \KK_G(\T^*_G M)\longrightarrow \KK_G(\T^*_G \Vcal)$ is defined by the relation $s_!=  Q^*_1\circ s'_!\circ (Q^*_2)^{-1}$. 

Thanks to Theorem \ref{theo:multiplicative-property}, the relation (\ref{eq.indice.bott})  implies that
$\index^{G\times O}_{P\times V}\circ s'_!= \index^{G\times O}_{P}$ on $\Ko_{G\times O}(\T^*_{G\times O} P)$. Thanks to Theorem \ref{theo:free.action} we have
 \begin{eqnarray*}
\index^{G}_{\Vcal}( s_!(\sigma))&=&\left[\index^{G\times O}_{P\times V}( s'_!\circ (Q^*_2)^{-1}(\sigma))\right]^{O}\\
&=& \left[\index^{G\times O}_{P}(  (Q^*_2)^{-1}(\sigma))\right]^{O}\\
&=& \index^{G}_{M}( \sigma).
\end{eqnarray*}
for any $\sigma\in \Ko_{G\times O}(\T^*_{G\times O} P)$.
\end{proof}

\medskip

We finish this section by considering the case of a $G$-equivariant embedding $i:Z\croc M$ between $G$-manifolds.

\begin{prop}\label{push=immersion}
We  have a push-forward morphism 
\begin{equation}\label{thom.embedding}
i_!: \KK_G(\T^*_G Z)\longrightarrow \KK_G(\T^*_G M)
\end{equation}
such that $\index^G_{M}\circ\ i_! =\index^G_{Z}$ on $\Ko_G(\T^*_G Z)$.
\end{prop}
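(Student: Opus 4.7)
The plan is to prove Proposition \ref{push=immersion} by reducing the embedding $i: Z \hookrightarrow M$ to two situations that have already been handled: the zero-section embedding of $Z$ into its normal bundle (Proposition \ref{prop.push.vector.bundle}) and an open embedding (Excision property).

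First, I would invoke the $G$-equivariant tubular neighborhood theorem: choose a $G$-invariant Riemannian metric on $M$ and let $\Ncal\to Z$ be the normal bundle of $Z$ in $M$. The exponential map restricted to a sufficiently small $G$-invariant neighborhood of the zero section gives an open $G$-equivariant embedding
$$
j: \Ncal \croc M,
$$
after identifying $\Ncal$ with such a neighborhood via a $G$-invariant diffeomorphism. This factors the original embedding as $i = j \circ s$, where $s: Z \to \Ncal$ is the zero section.

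Second, Proposition \ref{prop.push.vector.bundle}, applied to the equivariant real vector bundle $\Ncal\to Z$, supplies a push-forward
$$
s_!: \KK_G(\T^*_G Z)\longrightarrow \KK_G(\T^*_G \Ncal)
$$
satisfying $\index^G_{\Ncal}\circ s_! = \index^G_Z$. In parallel, the open equivariant embedding $j:\Ncal \croc M$ gives by Atiyah's construction (the map $j_*$ appearing in the Excision property) a push-forward
$$
j_*: \KK_G(\T^*_G \Ncal)\longrightarrow \KK_G(\T^*_G M),
$$
and the Excision theorem asserts that $\index^G_M \circ j_* = \index^G_{\Ncal}$. I would then define
$$
i_! := j_* \circ s_!,
$$
so that composing the two compatibilities yields $\index^G_M \circ i_! = \index^G_M \circ j_* \circ s_! = \index^G_{\Ncal}\circ s_! = \index^G_Z$, as required.

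The only point needing care is independence of the choice of tubular neighborhood. Two choices give two open $G$-embeddings $\Ncal \croc M$, and since any two are joined by a $G$-equivariant isotopy through such embeddings, the induced maps on $\KK_G(\T^*_G{-})$ agree; alternatively, one notes that by excision the composition $\index^G_M\circ j_*$ depends only on $\Ncal$, so the identity $\index^G_M\circ i_! = \index^G_Z$ holds intrinsically regardless of the choice. I expect this independence verification to be the main (mild) obstacle, since the push-forward $i_!$ itself (not merely its composition with $\index^G_M$) must be shown to be well-defined; this is however standard and is the classical tubular-neighborhood argument underlying Atiyah's construction in \cite{Atiyah74}.
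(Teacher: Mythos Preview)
Your proof is correct and follows essentially the same route as the paper: factor $i$ through a tubular neighborhood identified with the normal bundle, apply Proposition~\ref{prop.push.vector.bundle} for the zero-section push-forward $s_!$, then use the open-embedding push-forward $j_*$ and excision to conclude. The paper's version is slightly terser (it writes $i_! = j_*\circ\varphi^*\circ s_!$ with $\varphi:U\to\Ncal$ the tubular diffeomorphism) and does not pause to discuss independence of the tubular neighborhood, but the argument is the same.
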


\begin{proof}Let $\Ncal=\T M\vert_{Z} /\T Z$ be the normal bundle. We know that an  open $G$-invariant tubular 
neighborhood $U$ of $Z$ is equivariantly diffeomorphic with $\Ncal$: let us denote by $\varphi:U\to\Ncal$ this equivariant diffeomorphism. Let $j:U\croc M$ be the inclusion. We consider the morphism $s_!: \KK_G(\T^*_G Z)\longrightarrow \KK_G(\T^*_G \Ncal)$ 
defined in Proposition \ref{prop.push.vector.bundle}, the isomorphism
$\varphi^*:\KK_G(\T^*_G \Ncal)\longrightarrow \KK_G(\T^*_G U)$ and the push-forward morphism 
$j_*:\KK_G(\T^*_G U)\longrightarrow \KK_G(\T^*_G M)$. Thanks to Proposition \ref{prop.push.vector.bundle}, one sees that the composition $i_!= j_*\circ\varphi^*\circ s_!$ satisfies 
$\index^G_{M}\circ\ i_! =\index^G_{Z}$ on  $\Ko_G(\T^*_G Z)$.
\end{proof}

\subsection{Restriction : the vector bundle case}

Let $\Ecal\to M$ be a $G$-equivariant complex vector bundle. Let us introduce the invariant open subset 
$\T^*_G (\Ecal\setminus\{0\})$ of $\T^*_G \Ecal$ and its complement  $\T^*_G 
\Ecal\vert_{0-\mathrm{section}}=\T^*_G M\times \Ecal^*$. We denote
\begin{equation}\label{eq:morphism-R}
\br : \KK_G(\T^*_G \Ecal)\longrightarrow \KK_G(\T^*_G M)
\end{equation}
the composition of the restriction morphism 
$\KK_G(\T^*_G \Ecal)\to\KK_G(\T^*_G M\times \Ecal^*)$ with the Bott-Thom isomorphism 
$\KK_G(\T^*_G M\times \Ecal^*)\simeq \KK_G(\T^*_G M)$. Note that the morphism 
\begin{equation}\label{eq:morphism-R-distinguished}
\br : \KK_G(\T^*_D \Ecal)\longrightarrow \KK_G(\T^*_D M)
\end{equation} 
is also defined when $D\subset G$ is a {\bf distinguished} subgroup.

If $\Scal=\{v\in \Ecal\ \vert \ \|v\|^2=1\}$ is the sphere bundle, we have $\Ecal\setminus\{0\}\simeq 
\Scal\times\Rbb$ and then $\T^*_G (\Ecal\setminus\{0\})\simeq \T^*_G \Scal\times \T^*\Rbb$. 
 Let $i:\Scal\croc \Ecal$ be the canonical immersion. 
The composition of the Bott-Thom isomorphism $\KK_G(\T^*_G \Scal)\simeq \KK_G(\T^*_G (\Ecal\setminus\{0\}))$ 
with the morphism $j_*:\KK_G(\T^*_G (\Ecal\setminus\{0\}))\to\KK_G(\T^*_G \Ecal)$ correspond to the push-forward 
map $i_!$ defined in Proposition \ref{push=immersion}. The six term exact sequence (\ref{exact.sequence}) becomes
\begin{equation}\label{exact.sequence.cotangent}
\xymatrix{
\Ko_G(\T^*_G \Scal)\ar[r]^{i_!} & \Ko_G(\T^*_G \Ecal) \ar[r]^{\br} & \Ko_G(\T^*_G M)\ar[d]_{\delta}\\
\Kun_G(\T^*_G M)\ar[u]^{\delta}& \ar[l]^{\br}  \Kun_G(\T^*_G \Ecal) &\ar[l]^{i_!} \Kun_G(\T^*_G \Scal).
  }
\end{equation}

Let  $s_!: \KK_G(\T^*_G  M)\longrightarrow \KK_G(\T^*_G  \Ecal)$ be the push-forward morphism associated to the zero section 
$s:M\croc\Ecal$ (see Proposition \ref{push=immersion}). We have the fundamental
\begin{prop}\label{prop:restriction}
\begin{itemize}
\item The composition $\br \circ \, s_!: \KK_G(\T^*_G  M)\to \KK_G(\T^*_G  M)$ is the map 
$\sigma \longrightarrow \sigma\otimes \wedge^\bullet \overline{\Ecal}$.
\item The composition $s_!\circ\, \br : \KK_G(\T^*_G  \Ecal)\to \KK_G(\T^*_G  \Ecal)$ is defined by 
$\sigma \longrightarrow \sigma\otimes \wedge^\bullet \pi^*\overline{\Ecal}$.
\item We have $\index^G_M(\br(\sigma))= \index^G_\Ecal(\sigma\otimes\wedge^\bullet \pi^*\overline{\Ecal})$
for any  $\sigma\in \Ko_G(\T^*_G \Ecal)$.
\end{itemize}
\end{prop}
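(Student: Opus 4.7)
The overall strategy is to establish the three parts in order of increasing ease. Part (3) will be a one-line consequence of Part (2): applying $\index^G_\Ecal$ to both sides of the identity $s_!\circ\br(\sigma) = \sigma\otimes\wedge^\bullet\pi^*\overline{\Ecal}$ and using $\index^G_\Ecal\circ s_! = \index^G_M$ from Proposition \ref{prop.push.vector.bundle} gives $\index^G_M(\br(\sigma)) = \index^G_\Ecal(\sigma\otimes\wedge^\bullet\pi^*\overline{\Ecal})$. Thus the substantive work lies in the $K$-theoretic identities (1) and (2).

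For Part (1), both sides are $R(G)$-linear in $\sigma$, so the statement concerns the endomorphism $\br\circ s_!$ of $\KK_G(\T^*_G M)$. Following the proof of Proposition \ref{prop.push.vector.bundle}, I would fix a $G$-invariant Hermitian structure on $\Ecal$ and pass to the unitary frame bundle $P$, so that $\Ecal \simeq P\times_U V$ with $V=\Cbb^n$ and $U=U(n)$. The isomorphisms $Q_1^*, Q_2^*$ identify $\KK_G(\T^*_G M)$ with $\KK_{G\times U}(\T^*_{G\times U}P)$ and $\KK_G(\T^*_G\Ecal)$ with $\KK_{G\times U}(\T^*_{G\times U}(P\times V))$; under these identifications $s_!$ becomes exterior product with $\bott(V_\Cbb)$, while $\br$ becomes restriction to $v=0\in V$ followed by the inverse Bott--Thom isomorphism in the $V^*$ direction. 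The problem therefore reduces to identifying the class of $\bott(V_\Cbb)|_{v=0}$ in $\KK_{G\times U}(V^*)$, and then its image in $R(G\times U)$ under inverse Bott--Thom. A direct Clifford-algebra computation using the splitting $V_\Cbb = V\oplus\overline{V}$ and the associated decomposition $\wedge^\bullet V_\Cbb = \wedge^\bullet V\otimes\wedge^\bullet\overline{V}$ shows that $\bott(V_\Cbb)|_{v=0}$ equals the Bott--Thom generator on $V^*$ (which trivializes under inverse Bott--Thom) tensored with the $\Zbb/2$-graded virtual representation $[\wedge^\bullet\overline{V}]\in R(G\times U)$. Pulling back through the associated bundle construction yields $\br\circ s_!(\sigma) = \sigma\otimes\wedge^\bullet\overline{\Ecal}$.

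For Part (2), I would first apply $\br$ to the desired identity. Using the compatibility $\br(\sigma\otimes\pi^*\tau) = \br(\sigma)\otimes\tau$ for $\tau\in\KK_G(M)$ together with Part (1) applied to $\br(\sigma)$, one sees that $\br(s_!\br(\sigma)) = \br(\sigma)\otimes\wedge^\bullet\overline{\Ecal} = \br(\sigma\otimes\wedge^\bullet\pi^*\overline{\Ecal})$, so the difference $s_!\circ\br(\sigma) - \sigma\otimes\wedge^\bullet\pi^*\overline{\Ecal}$ lies in $\ker(\br)$, which by the six-term exact sequence (\ref{exact.sequence.cotangent}) is the image of $i_!:\KK_G(\T^*_G\Scal)\to\KK_G(\T^*_G\Ecal)$. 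A second local computation, carried out over the sphere bundle $\Scal$ in the same frame-bundle picture and tracking how $s_!\br$ acts on classes pulled up from $\Scal$, then forces this difference to vanish. Equivalently, one can proceed by a direct local computation entirely parallel to the one used for Part (1): in the local model $P\times V$, the composition $s_!\circ\br$ acts on a symbol by restricting to $v=0$, inverting Bott--Thom on $V^*$, and then exterior-multiplying by $\bott(V_\Cbb)$, and the same Clifford identification shows the net effect is multiplication by $\wedge^\bullet\overline{V}$ pulled back to $V$.

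The principal obstacle throughout is the precise Clifford-algebra identification of $\bott(V_\Cbb)|_{v=0}$ with the product of the Bott--Thom generator on $V^*$ and the class $[\wedge^\bullet\overline{V}]$. Getting this right requires careful bookkeeping of the Hermitian dualities and complex structures interchanging $V$, $V^*$, $\overline{V}$, and $iV\subset V_\Cbb$; once this identification is in hand, the remaining steps are functoriality properties of $s_!$, $\br$, and the Bott--Thom isomorphism already established earlier in the paper.
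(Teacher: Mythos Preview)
Your approach is essentially the paper's: reduction to the unitary frame bundle, the splitting $E_\Cbb\simeq E\oplus\overline{E}$, and the resulting Clifford factorization $\Clif(w\oplus iv)=\Clif(w+Jv)\odot\Clif(w-Jv)$ are exactly what the paper uses for the first point, and the third point is deduced from the second just as you say.

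For the second point, however, your first route---showing the difference lies in $\mathrm{Im}(i_!)$ and then invoking an unspecified ``second local computation'' to kill it---is not what the paper does and is left genuinely incomplete in your proposal: you give no mechanism for showing that an arbitrary element of $\mathrm{Im}(i_!)$ vanishes, and in fact it need not, so this step would require real additional work. The paper instead takes your second alternative and carries it out explicitly: in the local model on $\T^*_{G\times U}(Q\times E)$ it writes down a chain of transversally elliptic symbols
\[
\sigma\otimes\wedge^\bullet\overline{E}
\;\sim\;\sigma\odot\Clif(v)
\;\sim\;\sigma\vert_{v=0}\odot\Clif(v)
\;\sim\;\br'(\sigma)\odot\Clif(w)\odot\Clif(v)
\;\sim\;s'_!\circ\br'(\sigma),
\]
each step being an elementary homotopy of symbols with controlled support. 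This direct chain is the substance of the argument, and it bypasses the six-term sequence entirely.
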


\begin{proof} The third point is a consequence of second point. Let us check the first two points. 

We use the notations of the proof of proposition \ref{prop.push.vector.bundle}: we have a principal bundle $P\to M=P/O$ 
and $\Ecal$ coincides as a real vector bundle with $P\times_O E$. Since $\Ecal$ has an invariant complex structure, we can consider the frame bundle $Q\subset P$ formed by the unitary basis of $\Ecal$. Here $E=\Rbb^{2n}=\Cbb^n$. Let $U\subset O$ be the unitary group of $E$. Here the map $s_!$ and $\br$ can be defined with the reduced data $(Q,U)$ through the maps
\begin{eqnarray*}
s'_!:  \KK_{G\times U}(\T^*_{G\times U} Q)&\longrightarrow&  \KK_{G\times U}(\T^*_{G\times U} (Q\times E))\\
\sigma &\longmapsto& \bott(E_\Cbb)\odot_{\rm ext} \sigma \nonumber
\end{eqnarray*}
and $\br': \KK_{G\times U}(\T^*_{G\times U} (Q\times E))\longrightarrow\KK_{G\times U}(\T^*_{G\times U} Q)$. Since $E$ admits a complex structure $J$, the map 
$w\oplus iv\mapsto (w+Jv,w-Jv$) is an isomorphism between $E_\Cbb$ and  the orthogonal sum $E\oplus \overline{E}$. Hence on $E_\Cbb$ the 
Bott morphism $\Clif(w\oplus iv):\wedge^+ E_\Cbb\to\wedge^- E_\Cbb$ 
is  equal to the product of the morphisms $\Clif(w+ Jv):\wedge^+ E\to\wedge^- E$ and 
$\Clif(w- Jv):\wedge^+ \overline{E}\to\wedge^- \overline{E}$. When we restrict the 
Bott symbol $\bott(E_\Cbb)\in \Ko_{U}(\T^* E)$ to the $0$-section, we get 
$$
\left(\wedge^+ E\stackrel{\Clif(w)}{\longrightarrow}\wedge^- E\right)\odot 
\left(\wedge^+ \overline{E}\stackrel{\Clif(w)}{\longrightarrow}\wedge^- \overline{E}\right)
$$
which is equal to the class $\bott(E)\otimes\wedge^\bullet  \overline{E}$ in $\Ko_{U}(E)$. Finally the composition 
$\br' \circ \, s'_!: \KK_{G\times U}(\T^*_{G\times U} Q)\to \KK_{G\times U}(\T^*_{G\times U} Q)$ is equal to the map 
$\sigma \longrightarrow \sigma\otimes \wedge^\bullet \overline{E}$. We get the first point through the isomorphism
$\KK_{G\times U}(\T^*_{G\times U} Q)\simeq\KK_G(\T^*_G  M)$.

\medskip

Let $\sigma\in \KK_{G\times U}(\T_{G\times U}^*(Q\times E))$. For $(x,\xi; v,w)\in\T ^*Q\times \T^* E$, the transversally elliptic symbols 
\begin{eqnarray*}
\sigma(x,\xi;v,w)&\otimes&\wedge^\bullet\overline{E}\\
\sigma(x,\xi;v,w)&\odot&\Clif(v)\\
\sigma(x,\xi;0,w)&\odot&\Clif(v)\\
\br'(\sigma)(x,\xi)&\odot&\Clif(w)\odot\Clif(v)\\
\br'(\sigma)(x,\xi)&\odot&\Clif(w+ Jv)\odot(w-Jv)\\
s'_!&\circ& \br'(\sigma)(x,\xi; v,w)
\end{eqnarray*}
define the same class in $\KK_{G\times U}(\T_{G\times U}^*(Q\times E))$. We have proved that $s'_!\circ \br'(\sigma)=\sigma\otimes \wedge^\bullet\overline{E}$, and we get the second point through the isomorphism
$\KK_{G\times U}(\T^*_{G\times U} Q)\simeq\KK_G(\T^*_G  M)$. 
\end{proof}

\subsection{Restriction to a sub-manifold}\label{sec:restriction-Z}

Let $M$ be a $G$-manifold and let $Z$ be a closed $G$-invariant sub-manifold of $M$. Let us consider the open subset 
$\T^*_G(M\setminus Z)$ of $\T^*_G M$. Its complement is the closed subset $\T^*_G M\vert_Z$. 
Let $\Ncal$ be the normal bundle of $Z$ in $M$. We have $\T^*M\vert_Z=\T^*Z \times \Ncal^*$ and then 
$\T^*_G M\vert_Z = \T^*_G Z\times \Ncal^*$.

\medskip 

We make the following {\bf  hypothesis} : the real vector bundle $\Ncal^*\to Z$ has a $G$-equivariant complex structure. 
Then we can define the map 
\begin{equation}\label{eq:morphism-R-Z}
\br_Z : \KK_G(\T^*_G M)\longrightarrow \KK_G(\T^*_G Z)
\end{equation}
as the composition of the restriction $\KK_G(\T^*_G M)\to \KK(\T^*_G M\vert_Z)=$  \break $\KK_G(\T^*_G Z\times \Ncal^*)$ with the Bott-Thom isomorphism $\KK_G(\T^*_G Z\times \Ncal^*)\to \KK_G(\T^*_G Z)$.

\section{Localization}\label{sec:localization}

In this section, $\beta\in\ggot$ denotes a non-zero $G$-invariant element, and $\pi:\Ecal\to M$ is  a $G$-equivariant {\em hermitian} vector bundle such that 
\begin{equation}\label{E=beta}
\Ecal^\beta=M.
\end{equation}

\begin{rem}\label{rem-J-beta}
Note that (\ref{E=beta}) imposes the existence of a $G$-invariant complex structure on the fibers of $\Ecal$. 
We can take\footnote{Relatively to a $G$-invariant Euclidean metric on $\Ecal$, the linear map $-\Lcal(\beta)^2$ 
is positive definite, hence one can take its square root.} $J_\beta:=\Lcal(\beta)(-\Lcal(\beta)^2)^{\frac{-1}{2}}$, where 
$\Lcal(\beta)$ denotes the linear action on the fibers of 
$\Ecal$. 
\end{rem}

The aim of this section is the following 

\begin{theo}\label{theo.S.beta}
There exists a morphism $\bs_\beta: \KK_G(\T^*_G  M)\longrightarrow \KK_G(\T^*_G  \Ecal)$ satisfying the following properties:
\begin{enumerate}
\item The composition $\br\circ \bs_\beta$ is the identity on $\KK_G(\T^*_G  M)$. 
\item For any $a\in \KK_G(\T^*_G  M)$, we have $\bs_\beta(a)\otimes \wedge^\bullet \pi^*\overline{\Ecal}=s_!(a)$.
\item For any $\sigma\in \Ko_G(\T^*_G  M)$, we have the following equality
$$
\indice^G_{\Ecal}(\bs_\beta(\sigma))=\indice^G_{M}(\sigma\otimes[\wedge^\bullet  \overline{\Ecal}]^{-1}_{\beta})
$$
in $\Rgene(G)$, where $[\wedge^\bullet  \overline{\Ecal}]^{-1}_{\beta}$ is a polarized inverse of  $\wedge^\bullet  \overline{\Ecal}$.
\end{enumerate}
\end{theo}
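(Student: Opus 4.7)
The plan is to construct $\bs_\beta$ by reducing to a fiberwise model via the unitary frame bundle of $\Ecal$, and to build the model as a $\beta$-deformation of the Bott symbol. First, using the complex structure $J_\beta$ from Remark \ref{rem-J-beta}, I would pass to the unitary frame bundle $Q\to M$ so that $\Ecal = Q\times_U V$ with $V=\Cbb^n$ and $U=U(n)$, as in the proofs of Propositions \ref{prop.push.vector.bundle} and \ref{prop:restriction}. The hypothesis $\Ecal^\beta=M$ translates into the statement that $\Lcal(\beta)$ acts on $V$ without non-zero fixed vectors, so in the $J_\beta$ complex structure the operator $-iJ_\beta\Lcal(\beta)$ is positive-definite self-adjoint on $V$.

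The model symbol $\bs_\beta^V\in \Ko_{G\times U}(\T^*_G V)$ is built by deforming the classical Bott symbol $\bott(V_\Cbb)$ using the vector field $v\mapsto \Lcal(\beta)v$. Concretely, I would consider the family
$$
t\ \longmapsto\ \Clif\bigl((v+i\hat\xi)-t\, J_\beta\Lcal(\beta)v\bigr)
$$
on $\T^* V\simeq V_\Cbb$; for $t=0$ this is $\bott(V_\Cbb)$ (elliptic), and for $t=1$ the support intersected with $\T^*_G V$ is compact (since $\Lcal(\beta)v=0$ forces $v=0$), giving a $G$-transversally elliptic class. Then I would define
$$
\bs_\beta(\sigma):=\sigma\ \odot_{\rm ext}\ \bs_\beta^V
$$
on $\T^*_{G\times U}(Q\times V)$ and descend to $\T^*_G\Ecal$ via the $U$-quotient, exactly as in the proof of Proposition \ref{prop.push.vector.bundle}.

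Property (1) is a local calculation on $V$: the deformation term $J_\beta\Lcal(\beta)v$ vanishes on the zero section, so after the Bott-Thom isomorphism in the cotangent direction the restriction $\br\circ\bs_\beta$ collapses to the identity. Property (2) uses the splitting $V_\Cbb\simeq V\oplus\overline V$ and the factorization of Clifford multiplication recorded in the second part of the proof of Proposition \ref{prop:restriction}: tensoring $\bs_\beta^V$ with $\wedge^\bullet\overline V$ reconstitutes $\bott(V_\Cbb)$ (up to homotopy along the $\beta$-deformation parameter), whence $\bs_\beta(a)\otimes\wedge^\bullet\pi^*\overline\Ecal=s_!(a)$.

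Property (3) is the main content. It requires computing $\indice^{G\times U}_V(\bs_\beta^V)\in \Rgene(G\times U)$; this index is the character of holomorphic polynomials on $V$ equipped with the $\beta$-polarized complex structure, which is precisely a polarized formal inverse $[\wedge^\bullet\overline V]^{-1}_\beta$ of the finite character $\wedge^\bullet\overline V=\prod(1-e^{-\alpha_i})$. Combining with Theorems \ref{theo:multiplicative-property} and \ref{theo:free.action} then yields
$$
\indice^G_\Ecal(\bs_\beta(\sigma))=\indice^G_M(\sigma)\cdot[\wedge^\bullet\overline\Ecal]^{-1}_\beta=\indice^G_M\bigl(\sigma\otimes[\wedge^\bullet\overline\Ecal]^{-1}_\beta\bigr),
$$
where the infinite sum is a well-defined element of $\Rgene(G)$ because the polarization by $\beta$ groups the contributions so that only finitely many of them land in each $G$-isotypic component. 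The main obstacle is the fiberwise index identification with the polarized inverse: one must show that $\bs_\beta^V$ is independent of the deformation parameter and of the choice of polarization-compatible metric, and that the resulting generalized character coincides with $[\wedge^\bullet\overline V]^{-1}_\beta$. This is precisely the step where transversal ellipticity is essential, since the corresponding honestly elliptic deformation would only recover the finite alternating sum $\wedge^\bullet\overline V$ itself, not its formal inverse.
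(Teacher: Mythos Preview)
Your outline has the right shape but contains a genuine structural gap: the model symbol you write down on $V=\Cbb^n$ is \emph{not} $U(n)$-equivariant in general. The deformation term $v\mapsto \Lcal(\beta)v$ is a linear endomorphism of the fiber whose eigenvalues are the distinct weights $\langle\alpha,\beta\rangle$ of $\Tbb_\beta$ on $\Ecal$; unless all these weights coincide, this endomorphism does not commute with the full unitary group, so your $\bs_\beta^V$ is not a class in $\Ko_{G\times U}(\T^*_{G} V)$ and the descent through $Q\times_U V$ fails. (From another angle: in the full frame-bundle description the $G$-action on $\Ecal=Q\times_U V$ lives entirely on $Q$, so there is no intrinsic ``$\Lcal(\beta)$ on $V$'' to deform by.)

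The paper repairs this by reducing the structure group. One first decomposes $\Ecal=\bigoplus_\alpha\Ecal_\alpha$ into $\Tbb_\beta$-weight sub-bundles and takes the bundle $P'$ of $\Tbb_\beta$-\emph{equivariant} unitary frames; its structure group is the commutant $U'=\prod_\alpha U(n_\alpha)\subset U$. On the linear model $E=\bigoplus_\alpha(\Cbb_\alpha)^{n_\alpha}$ the pushed Thom class $\thom_\beta(E)$ is now genuinely $\Tbb_\beta\times U'$-equivariant, and one applies the multiplicative property with $G_1=\Tbb_\beta$, $G_2=G\times U'$ to set $\bs_\beta''(\sigma)=\thom_\beta(E)\odot_{\rm ext}\sigma$, then quotient by $U'$ and forget the extra $\Tbb_\beta$. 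Properties (1) and (2) reduce to the linear identities $\br(\thom_\beta(E))=1$ and $\thom_\beta(E)\otimes\wedge^\bullet\overline E=\bott(E_\Cbb)$ (Propositions~\ref{prop:thom-beta-linear} and~\ref{prop:thom-beta-bott}), which is exactly what you intended. For (3), the extra $\Tbb_\beta$-factor is not cosmetic: it supplies the grading that makes the infinite sum $[\wedge^\bullet\overline\Ecal]^{-1}_\beta\in\widehat{\Ko_G}(M)$ well-defined, and your intermediate expression $\indice^G_M(\sigma)\cdot[\wedge^\bullet\overline\Ecal]^{-1}_\beta$ is not well-formed as written (the second factor lies in $\widehat{\Ko_G}(M)$, not in $\Rgene(G)$); one must use the extended pairing of Remark~\ref{rem:indice.mu}.
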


\medskip

\begin{rem} \label{rem:S-beta-complex-structure}
The maps $\br$ and  $\bs_\beta$ depend on the choice of the $G$-invariant complex structure on $\Ecal$.
\end{rem}

Theorem \ref{theo.S.beta} tells us that (\ref{exact.sequence.cotangent}) breaks in an exact sequence
$$
0\to\KK_G(\T^*_G \Scal)\stackrel{i_!}{\longrightarrow} \KK_G(\T^*_G \Ecal) \stackrel{\br}{\longrightarrow} \KK_G(\T^*_G M)\to 0.
$$
Since $\br\circ \bs_\beta=\br\circ \bs_{-\beta}$ the image of the map $\bs_\beta- \bs_{-\beta}:\KK_G(\T^*_G M)\to\KK_G(\T^*_G \Ecal)$ 
belongs to the image of the push-forward map $i_!:\KK_G(\T^*_G \Scal)\to\KK_G(\T^*_G \Ecal)$.

Let us work now with the complex structure $J_\beta$ on $\Ecal$. We denote $\bs_{\pm\beta}^o$ the corresponding morphism. In Section \ref{proof:theta-beta} we will prove the following

\begin{theo}\label{theo:theta-beta}
There exists a morphism $\teta_\beta: \KK_G(\T^*_G  M)\longrightarrow \KK_G(\T^*_G  \Scal)$ such that
$$
\bs^o_{-\beta}- \bs^o_{\beta}=i_! \circ \teta_\beta.
$$
\end{theo}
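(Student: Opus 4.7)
The plan is to observe that the existence of $\teta_\beta$ is essentially a formal consequence of Theorem \ref{theo.S.beta}(1) combined with the split short exact sequence
$$
0\to\KK_G(\T^*_G \Scal)\stackrel{i_!}{\longrightarrow} \KK_G(\T^*_G \Ecal) \stackrel{\br}{\longrightarrow} \KK_G(\T^*_G M)\to 0
$$
displayed just after Theorem \ref{theo.S.beta}. Since both $\bs_\beta^o$ and $\bs_{-\beta}^o$ split $\br$, the difference satisfies $\br\circ (\bs^o_{-\beta}-\bs^o_\beta)=0$, so its image lies in $\ker(\br)=\mathrm{Im}(i_!)$. Injectivity of $i_!$ then yields a unique $R(G)$-linear map $\teta_\beta:\KK_G(\T^*_G M)\to\KK_G(\T^*_G \Scal)$ with $i_!\circ\teta_\beta=\bs^o_{-\beta}-\bs^o_\beta$, which establishes the theorem.

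Beyond this formal existence, I expect the actual content of Section \ref{proof:theta-beta} to be an explicit geometric description of $\teta_\beta(\sigma)$, useful for later computations. The picture is that $\bs_\beta^o$ and $\bs_{-\beta}^o$ differ only through the polarization by $\pm\beta$ underlying the construction of Theorem \ref{theo.S.beta}; both coincide with $\sigma$ on the zero section under $\br$. One can therefore hope to pick representatives of $\bs_\beta^o(\sigma)$ and $\bs_{-\beta}^o(\sigma)$ agreeing on a neighbourhood of the zero section of $\Ecal$, so that their difference has support contained in $\Ecal\setminus\{0\}\simeq\Scal\times\Rbb$. Transporting this annular class through the Bott--Thom isomorphism $\Ko_G(\T^*_G(\Ecal\setminus\{0\}))\simeq\Ko_G(\T^*_G\Scal)$ then produces the candidate for $\teta_\beta(\sigma)$.

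The main obstacle is the concrete construction of such compatible representatives with enough control near the zero section to ensure the difference is genuinely localized on $\Scal$; once the representatives are in place, the identification of the resulting $\Scal$-class with $\teta_\beta(\sigma)$ is forced by the uniqueness from the first paragraph. A secondary point I would want to verify, since $\teta_\beta$ will depend on the auxiliary complex structure $J_\beta$ as noted in Remark \ref{rem:S-beta-complex-structure}, is the stability of the construction under admissible deformations of $\beta$ within the connected component where the hypothesis $\Ecal^\beta=M$ persists.
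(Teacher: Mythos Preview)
Your formal argument in the first paragraph is correct and indeed proves the theorem exactly as stated; the paper itself makes precisely this observation in the paragraph immediately preceding the statement of Theorem~\ref{theo:theta-beta}. So the bare existence of $\teta_\beta$ is not in doubt.

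What the paper actually does in Section~\ref{proof:theta-beta}, however, is rather different from your second-paragraph speculation. It does not pick compatible representatives of $\bs^o_{\pm\beta}(\sigma)$ and localize their difference on an annulus. Instead it works entirely at the level of the linear model $E$: it introduces the tangential Cauchy--Riemann symbol $\sigmad^{E}\in\Ko_G(\T^*_G S_E)$ (Definition~\ref{def:sigmad}) and proves the identity
\[
\thom_{-\beta}(E)-\thom_{\beta}(E)=i_!(\sigmad^{E})
\]
in $\Ko_G(\T^*_G E)$ (Proposition~\ref{prop:thom-pm}, whose proof is the homotopy computation of Appendix~B). The map $\teta_\beta$ is then \emph{defined} explicitly by $\sigma\mapsto \sigmad^{E}\odot_{\rm ext}\sigma$, run through the same principal-bundle machinery (the quotient by $U'$ and the forgetful map) that produced $\bs_\beta$ from $\thom_\beta(E)$. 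The commutative triangle $i_!\circ\teta_\beta=\bs^o_{-\beta}-\bs^o_\beta$ then follows formally from the linear identity above.

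What each approach buys: your abstract argument gives existence for free, and uniqueness of $\teta_\beta$ as a bonus since $i_!$ is injective. The paper's constructive route costs a nontrivial homotopy (Appendix~B) but yields an explicit formula for $\teta_\beta$ in terms of the Cauchy--Riemann class. This is not idle: the classes $\sigmad^{E}$ and their products $\sigmad^{V/V^\ggot,\varphi}$ are precisely the generators of $\Ko_G(\T^*_G V^{gen})$ in Theorem~\ref{theo:generateur}~\textbf{d}, and the functoriality relations of Proposition~\ref{prop:sigmad-bott} (which are derived from Proposition~\ref{prop:thom-pm}) drive the inductive arguments of Section~\ref{subsec:proof-generators-V-gen}. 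So while your proof suffices for the theorem as stated, the paper's proof is really the introduction of a key player for the rest of the paper.
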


\subsection{Atiyah-Singer pushed symbols}\label{sec:pushed}

Let $M$ be a $G$-manifold with an invariant almost complex structure $J$. Then the cotangent bundle $\T^* M$ is canonically equipped with a complex structure, still denoted $J$. The Bott morphism on $\T^* M$ associated to the complex vector bundle $(\T^* M, J)\to M$, is called the Thom symbol of $M$, and is denoted\footnote{When the almost complex structure is understood, we will use the notation $\thom(M)$.}  $\thom(M,J)$. Note that the product by the Thom symbol induces an isomorphism 
$\KK_G(M)\simeq\KK_G(\T^* M)$. 

For any $X\in\ggot$, we denote $X_M(m):=\frac{d}{dt}\vert_0 e^{-t X}\cdot m$ the corresponding vector field on 
$M$. Thanks to an invariant Riemmannian metric on $M$, we define the $1$-form 
$$
\widetilde{X}_M(m)=(X_M(m), - ).
$$

From now on, we take $X=\beta$ a non-zero $G$-invariant element. Then the corresponding $1$-form $\widetilde{\beta}_M$ is $G$-invariant, and we define following Atiyah-Singer the equivariant morphism  
$$
\thom_\beta(M,J)(m,\xi):=\thom(M,J)(m,\xi-\widetilde{\beta}_M(m)), \quad (\xi,m)\in \T^* M.
$$
 We check easily that 
$$
{\rm Support}\left(\thom_\beta(M,J)\right)\bigcap \T_{\Tbb_\beta}^* M= \{(m,0);\ m\in M^\beta\}.
$$
where $\Tbb_\beta=\overline{\exp(\Rbb\beta)}$ is the torus generated by $\beta$. In particular, we get a class \begin{equation}
\thom_\beta(M,J)\in \Ko_G(\T_{\Tbb_\beta}^* M)
\end{equation} 
when $M^\beta$ is compact. 

\subsection{Atiyah-Singer pushed symbols : the linear case}\label{sec:pushed-linear}

Let us consider the case of a $G$-Hermitian  vector space $E$ such that $E^\beta=\{0\}$.  

Let $i_!: \Ko_G(\T^*_{\Tbb_\beta} S)\to \Ko_G(\T^*_{\Tbb_\beta} E)$ be the push-forward morphism associated to 
the inclusion $i: S\croc E$ of the sphere of radius one. Let $\br: \KK_G(\T^*_{\Tbb_\beta} E)\to \KK_G(\{\bullet\})$ 
be the restriction morphism.  Since $\Kun_G(\{\bullet\})=0$, the six term exact sequence 
(\ref{exact.sequence.cotangent}) becomes
\begin{equation}\label{exact.sequence.linear}
0\longrightarrow\Ko_G(\T^*_{\Tbb_\beta} S_E)\stackrel{i_!}{\longrightarrow}  \Ko_G(\T^*_{\Tbb_\beta} E) \stackrel{\br}{\longrightarrow}R(G).
\end{equation}

The pushed Thom symbol on $E$ defines a class $\thom_\beta(E)\in \Ko_{G}(\T_{\Tbb_\beta}^* E)$. 

\begin{prop}\label{prop:thom-beta-linear}
\begin{itemize}
\item We have $\br(\thom_\beta(E))=1$ in $R(G)$.
\item The sequence (\ref{exact.sequence.linear}) breaks down: we have a decomposition
$$
\Ko_G(\T^*_{\Tbb_\beta} E)=\Ko_G(\T^*_{\Tbb_\beta} S_E)\oplus \langle \thom_\beta(E)\rangle,
$$
where $\langle \thom_\beta(E)\rangle$ denotes the free $R(G)$-module generated by $\thom_\beta(E)$.
\end{itemize}
\end{prop}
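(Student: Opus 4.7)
The proposition has two parts: first, the computation $\br(\thom_\beta(E))=1$, and second, a formal consequence via the six-term exact sequence (\ref{exact.sequence.linear}) together with the splitting lemma from Section 2.1. My plan is to establish part (1) by a direct calculation at the zero section, then obtain part (2) as a formal consequence.

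For part (1), the key observation is that the vector field $\beta_E$ vanishes at the origin, so the $1$-form $\widetilde{\beta}_E$ vanishes there too. Consequently $\thom_\beta(E)(0,\xi)=\thom(E)(0,\xi)$. Now $\thom(E)$ is by definition the Bott morphism of the complex vector bundle $(\T^*E,J)\to E$; since $E$ is Hermitian, this complex bundle is canonically trivialized as $E\times E^*$, and the morphism at a point $(v,\xi)$ is $\Clif(\xi)$ acting on $\wedge^\bullet_\Cbb E^*$. Restricting to $v=0$ therefore yields exactly the Bott symbol $\bott(E^*)\in\Ko_G(E^*)$. By the defining property of the Bott-Thom isomorphism $\Ko_G(E^*)\simeq R(G)$, its inverse sends $[\bott(E^*)]$ to $1$, and we conclude that $\br(\thom_\beta(E))=1$.

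For part (2), define the $R(G)$-linear map $S:R(G)\to\Ko_G(\T^*_{\Tbb_\beta}E)$ by $S(\theta)=\theta\cdot\thom_\beta(E)$. The $R(G)$-linearity of $\br$ combined with part (1) gives $\br\circ S=\Id_{R(G)}$; in particular $S$ is injective, so $\langle\thom_\beta(E)\rangle=S(R(G))$ is a free $R(G)$-module of rank one. Using the Bott-Thom isomorphism $R(G)\simeq\Ko_G(E^*)$ to transport $S$ into a section of the restriction map $\Ko_G(\T^*_{\Tbb_\beta}E)\to\Ko_G(E^*)$, the splitting lemma of Section 2.1 applied to the exact sequence (\ref{exact.sequence.linear}) (whose injectivity on the left uses $\Kun_G(\{\bullet\})=0$) yields the desired decomposition $\Ko_G(\T^*_{\Tbb_\beta}E)=i_!(\Ko_G(\T^*_{\Tbb_\beta}S_E))\oplus\langle\thom_\beta(E)\rangle$.

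The main work lies in part (1): unwinding the conventions so that the restriction of $\thom_\beta(E)$ at the zero section is recognised cleanly as the Bott symbol on $E^*$, with no extraneous factors of $\wedge^\bullet\overline{E}$ or sign twists. Once that identification is secured, part (2) is purely formal from the six-term exact sequence and the splitting lemma.
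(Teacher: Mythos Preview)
Your proof is correct and follows essentially the same approach as the paper: for part~(1) you restrict $\thom_\beta(E)$ to the zero section, where the deformation term $\widetilde{\beta}_E$ vanishes, and recognise the result as the Bott symbol $\bott(E^*)$, which the Bott--Thom isomorphism sends to~$1$; part~(2) then follows formally from the splitting lemma. The paper's own proof is the same computation written in one line (using the identification $E^*\simeq E$ via the Euclidean structure, so the symbol acts on $\wedge^\bullet E$ rather than $\wedge^\bullet E^*$), and dismisses the second point as ``a direct consequence of the first one''.
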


\begin{proof}
At $(x,\xi)\in\T^*E$ the map $\thom_\beta(E)(x,\xi): \wedge^+E\to \wedge^- E$ is equal to $\Clif(\hat{\xi}-\beta_E(x))$, where $\xi\in E^*\mapsto \hat{\xi}\in E$ is the identification given by the Euclidean structure. We see that the restriction of $\thom_\beta(E)$ to $\T^* _{\Tbb_\beta}E\vert_0= E^*$ is equal to $\bott(E^*)$ and then the first point follows. The second point is a direct consequence of the first one.
\end{proof}

\bigskip

Let $\widehat{\Tbb}_\beta$ the group of characters of the torus $\Tbb_\beta$. The complex $G$-module $E$ decomposes into weight spaces $E=\sum_{\alpha\in \widehat{\Tbb}_\beta} E_\alpha$ where each $E_\alpha=\{v\in E\, \vert\, t\cdot v=t^\alpha v\}$ are $G$-submodules.  We define the $\beta$-positive and negative part of $E$, 
$$
E^{+,\beta}=\sum_{\stackrel{\alpha\in \widehat{\Tbb}_\beta}{\langle \alpha, \beta\rangle>0}} E_\alpha,\qquad 
E^{-,\beta}=\sum_{\stackrel{\alpha\in \widehat{\Tbb}_\beta}{\langle \alpha, \beta\rangle<0}} E_\alpha
$$
and the $\beta$-polarized module $|E|^{\beta}= E^{+,\beta}\oplus \overline{E^{-,\beta}}$. It is important to note that the complex $G$-module $|E|^{\beta}$ is isomorphic to\footnote{With $J_\beta= \Lcal(\beta)(- \Lcal(\beta)^2)^{-1/2}$.} $(E,J_\beta)$, and so it does no depend on the initial complex structure of $E$. 

Let $\widehat{R(G)}$ be the $R(G)$-submodule of $\Rgene(G)$ defined by the torus $\Tbb_\beta$ (see Definition \ref{def:hat.K}). 
Since all the $\widehat{\Tbb}_\beta$-weights in $|E|^{\beta}$ satisfy the condition $\langle \alpha, \beta\rangle>0$, the symmetric space $S^\bullet(|E|^{\beta})$ decomposes as a sum $\sum_{\mu\in \widehat{\Tbb}_\beta}S^\bullet(|E|^{\beta})_\mu$ with 
$S^\bullet(|E|^{\beta})_\mu\in R(G)_\mu$. Hence $S^\bullet(|E|^{\beta})$ defines an element of $\widehat{R(G)}$.

The following computation is done in \cite{Atiyah74}[Lecture 5] (see also \cite{pep-RR}[Section 5.1]).

\begin{prop}\label{prop.index.Thom.beta.E}
We have the following equality in $R^{-\infty}(G)$ :
    \begin{equation}\label{eq.indice.thom.beta.E}
    \indice^{G}_E(\thom_{\beta}(E))=(-1)^{\dim_{\Cbb}E^{+,\beta}}\
    \det(E^{+,\beta})\otimes S^\bullet(|E|^{\beta}),
    \end{equation}
    where $\det(E^{+,\beta})$ is a character of $G$.
\end{prop}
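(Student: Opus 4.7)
The plan is to reduce the computation to the fundamental case of $U(1)$ acting on $\Cbb$ via successive multiplicativity arguments, and then invoke Atiyah's explicit Fourier-series calculation in \cite{Atiyah74}[Lecture 5].

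First, I would establish that the pushed Thom symbol is multiplicative under $G$-invariant orthogonal direct sums: if $E = E_1 \oplus E_2$ as $G$-Hermitian modules, then
$$
\thom_\beta(E) = \thom_\beta(E_1) \odot_{ext} \thom_\beta(E_2)
$$
in $\Ko_G(\T^*_{\Tbb_\beta} E)$, using the super-tensor splitting $\wedge^\bullet E \simeq \wedge^\bullet E_1 \otimes \wedge^\bullet E_2$ and the corresponding factorization of Clifford multiplication by $\hat\xi - \beta_E(x)$. The support of each factor meets its $\Tbb_\beta$-transverse locus only at the origin, so the product has compact intersection with $\T^*_{\Tbb_\beta}(E_1 \oplus E_2)$. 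To apply multiplicativity of the index, I would introduce an auxiliary $G \times G$-action on $E_1 \times E_2$ (each copy acting on a different factor), placing us within the hypotheses of Theorem \ref{theo:multiplicative-property-abelian} (with $\agot_1 \cap \agot_2 = \{0\}$), and then restrict to the diagonal to obtain
$$
\indice^G_E(\thom_\beta(E)) = \indice^G_{E_1}(\thom_\beta(E_1)) \cdot \indice^G_{E_2}(\thom_\beta(E_2)).
$$
Decomposing $E$ into $\Tbb_\beta$-isotypic components therefore reduces the statement to two elementary single-weight cases: (a) $E_\alpha$ with $\langle\alpha,\beta\rangle > 0$, where one must prove $\indice^G_{E_\alpha}(\thom_\beta(E_\alpha)) = (-1)^{\dim E_\alpha}\det(E_\alpha)\otimes S^\bullet(E_\alpha)$, and (b) $E_\alpha$ with $\langle\alpha,\beta\rangle < 0$, where one must prove $\indice^G_{E_\alpha}(\thom_\beta(E_\alpha)) = S^\bullet(\overline{E_\alpha})$. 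The product of these outputs over all $\alpha$ reassembles exactly into $(-1)^{\dim_\Cbb E^{+,\beta}}\det(E^{+,\beta})\otimes S^\bullet(|E|^\beta)$, using $|E|^\beta = E^{+,\beta} \oplus \overline{E^{-,\beta}}$.

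Second, case (b) reduces to case (a) via conjugation: under the natural identification swapping the complex structure, $\thom_\beta(E_\alpha)$ on a negative-weight module corresponds to $\thom_{-\beta}(\overline{E_\alpha})$, and $\overline{E_\alpha}$ has positive $(-\beta)$-weight. For case (a) itself, I would use the unitary frame bundle of $E_\alpha$ together with Theorem \ref{theo:free.action}, combined with a further application of multiplicativity, to reduce to the rank-one subcase $E_\alpha = \Cbb$ with $\Tbb_\beta = U(1)$ acting by a positive character. Atiyah's direct Fourier computation in \cite{Atiyah74}[Lecture 5] then yields the rank-one identity: on $\Cbb$ with $U(1)$ acting by weight $1$, the pushed Thom symbol has index $-\sum_{k \geq 1}\Cbb_k = -\Cbb_1 \cdot S^\bullet(\Cbb_1)$, which coincides with $(-1)\det(\Cbb_1)\otimes S^\bullet(\Cbb_1)$.

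The main obstacle is tracking the sign and determinant prefactor. The factor $(-1)^{\dim E^{+,\beta}}\det(E^{+,\beta})$ is invisible from the abstract multiplicativity step and emerges only from the explicit rank-one Fourier calculation, in which the pushed Thom symbol misses the constant term $\Cbb_0$ of the geometric series. One must then verify that reassembling the single-weight outputs and re-expressing $S^\bullet(E^{+,\beta}) \otimes S^\bullet(\overline{E^{-,\beta}})$ as $S^\bullet(|E|^\beta)$, in the sense of the $\beta$-adapted complex structure $J_\beta$ of Remark \ref{rem-J-beta}, produces exactly the claimed prefactor, with the negative-weight side contributing purely through $S^\bullet(\overline{E^{-,\beta}})$ and with no residual sign on that side.
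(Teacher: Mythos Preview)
Your outline is essentially the standard argument, and it matches what the paper does: the paper does not give its own proof but simply cites \cite{Atiyah74}[Lecture~5] and \cite{pep-RR}[Section~5.1], both of which proceed by reducing to the rank-one $U(1)$ case via multiplicativity and then invoking the explicit Fourier-series computation (exactly as in the Example that immediately follows the Proposition in the paper). Your use of an auxiliary $G\times G$-action to place the product within the hypotheses of Theorem~\ref{theo:multiplicative-property-abelian} is a legitimate way to obtain the $G$-index product formula, and the subsequent passage to rank one via the unitary frame bundle and Theorem~\ref{theo:free.action} is also the standard route.

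One point deserves correction. Your reduction of case~(b) to case~(a) ``via conjugation'' is garbled: if $\langle\alpha,\beta\rangle<0$, then $\overline{E_\alpha}$ has weight $-\alpha$, so $\langle-\alpha,-\beta\rangle=\langle\alpha,\beta\rangle<0$ is still negative, not positive as you claim. There is no clean identification of $\thom_\beta(E_\alpha)$ with $\thom_{-\beta}(\overline{E_\alpha})$ at the level of symbols; the two live on different exterior bundles and differ by more than a sign. The honest fix is simply to treat both signs directly in the rank-one case: Atiyah's Fourier computation handles $\beta>0$ and $\beta<0$ separately (this is precisely what the paper's Example does, computing $\indice^{S^1}_{\Cbb}(\Clif(\xi+ix))=-\sum_{k\geq 1}t^k$ and $\indice^{S^1}_{\Cbb}(\Clif(\xi-ix))=\sum_{k\leq 0}t^k$), and both outputs then feed into the product over weights to produce the claimed sign-and-determinant prefactor on the $E^{+,\beta}$ side only. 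Alternatively, you can first pass to the complex structure $J_\beta$ of Remark~\ref{rem-J-beta} (for which all $\Tbb_\beta$-weights are positive, putting everything in case~(a)) and then invoke Remark~\ref{rem:thom-J-0-1} to compare with the original complex structure; but that comparison must be made explicit, and the direct two-sign computation is simpler.
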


\begin{exam} Let $V=\Cbb$ with the canonical action of $G=S^1$. Let $\beta=\pm 1$ in $\mathrm{Lie}(S^1)=\Rbb$. The class $\thom_{\pm 1}(\Cbb)\in \Ko_{S^1}(\T_{S^1}\Cbb)$ are represented
by the symbols
$$
\Clif(\xi\pm i x):\Cbb\longrightarrow \Cbb,\quad (x,\xi)\in \T^*\Cbb\simeq\Cbb^2.
$$
We have $\indice^{S^1}_\Cbb(\Clif(\xi+i x))=-\sum_{k\geq 1} t^k$, and $\indice^{S^1}_\Cbb(\Clif(\xi-i x))=\sum_{k\leq 0} t^k$ in $\Rgene(S^1)$.
\end{exam}

\begin{rem}\label{rem:thom-J-0-1}
Let $J_k, k=0,1$ be two invariants complex structures on $E$, and let $\thom_{\beta}(E, J_k)$ be the corresponding pushed symbols. There exists an invertible element $\Phi\in R(G)$ such that 
$$
\indice^{G}_E(\thom_{\beta}(E,J_0))=\Phi\cdot\indice^{G}_E(\thom_{\beta}(E,J_1)).
$$
\end{rem}

\subsection{Pushed symbols : functoriality}\label{sec:pushed-functorial}

Suppose now that we have a decomposition $V=W\oplus E$ of $G$-complex vector spaces such that 
$V^\beta=\{0\}$.

\begin{prop}\label{prop:thom-beta-bott}  
In $\Ko_G(\T^*_{G} V)$, we have\footnote{These equalities holds also in $\Ko_G(\T^*_{\Tbb_\beta} V)$.} the equalities
\begin{eqnarray*}
\thom_\beta(V)\otimes\wedge^\bullet_\Cbb \overline{V}&=& \bott(V_\Cbb),\\
\thom_\beta(V)\otimes \wedge^\bullet_\Cbb \overline{E}&= &\thom_\beta(W)\odot\bott(E_\Cbb).
\end{eqnarray*}
\end{prop}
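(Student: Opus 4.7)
The plan is to establish the first identity via an explicit homotopy of symbols in $\Ko_G(\T^*_{\Tbb_\beta} V)$, and then deduce the second identity from the first together with a multiplicative splitting of $\thom_\beta$ along the decomposition $V = W \oplus E$.

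For the first identity, I would start by using the identification $V_\Cbb \simeq V \oplus \overline V$ via $w + iv \mapsto (w + Jv, w - Jv)$ recalled in the proof of Proposition \ref{prop:restriction}, which rewrites
\[
\bott(V_\Cbb)(x,\xi) = \Clif(x + J\hat\xi) \odot \Clif(x - J\hat\xi)
\]
as a morphism on $\wedge^\bullet V \otimes \wedge^\bullet \overline V$. On the other hand, $\thom_\beta(V) \otimes \wedge^\bullet \overline V$ is represented by $\Clif(\hat\xi - \beta_V(x)) \odot 0_{\wedge^\bullet \overline V}$, since the zero morphism $\wedge^+\overline V \to \wedge^-\overline V$ has K-theory class $\wedge^\bullet \overline V \in R(G)$. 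I then consider the linear homotopy
\[
\sigma_t(x,\xi) = \Clif\bigl((1-t)(\hat\xi - \beta_V(x)) + t(x + J\hat\xi)\bigr) \odot \Clif\bigl(t(x - J\hat\xi)\bigr), \quad t \in [0,1],
\]
which interpolates between the two symbols.

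The main obstacle is verifying that $\mathrm{Support}(\sigma_t) \cap \T^*_{\Tbb_\beta} V = \{(0,0)\}$ for every $t$, so that the family gives a valid homotopy in $\Ko_G(\T^*_{\Tbb_\beta} V)$. For $t=0$ the second Clifford factor is identically zero, and the first forces $\hat\xi = \beta_V(x)$; combined with the transversality $\langle \xi, \beta_V(x)\rangle = 0$ this gives $\beta_V(x) = 0$, hence $x = 0$ since $V^\beta = \{0\}$. For $t \in (0,1]$, the second factor forces $x = J\hat\xi$, so $\hat\xi = -Jx$; substituting in the first factor and pairing with $\beta_V(x)$, the skew-symmetry of the infinitesimal action yields $\langle x, \beta_V(x)\rangle = 0$, while the $\T^*_{\Tbb_\beta}$-condition yields $\langle Jx, \beta_V(x)\rangle = -\langle \hat\xi, \beta_V(x)\rangle = 0$; both linear $\beta_V$-terms disappear, leaving $(1-t)\|\beta_V(x)\|^2 = 0$, which forces $x = 0$ for $t < 1$, while $t=1$ is immediate from $u_1 = 2x = 0$. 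Homotopy invariance of the K-theory class then yields the first identity.

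For the second identity, the splitting $V = W \oplus E$ as $G$-modules gives $\beta_V = \beta_W \oplus \beta_E$, and the compatibility $\Clif(a+b) = \Clif(a) \odot \Clif(b)$ on $\wedge^\bullet V = \wedge^\bullet W \otimes \wedge^\bullet E$ produces the external factorization
\[
\thom_\beta(V) = \thom_\beta(W) \odot_{\rm ext} \thom_\beta(E)
\]
under the identification $\T^* V = \T^* W \times \T^* E$. Since $\wedge^\bullet \overline E \in R(G)$ tensors through the $E$-factor of the external product, one obtains
\[
\thom_\beta(V) \otimes \wedge^\bullet \overline E = \thom_\beta(W) \odot_{\rm ext}\bigl(\thom_\beta(E) \otimes \wedge^\bullet \overline E\bigr),
\]
and applying the first identity to $E$ (which satisfies $E^\beta = \{0\}$ as a direct summand of $V$) converts the right factor into $\bott(E_\Cbb)$, concluding the proof.
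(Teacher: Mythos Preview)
Your proof of the first identity is correct: the homotopy $\sigma_t$ and the support check against $\T^*_{\Tbb_\beta}V$ are carried out carefully and the conclusion follows.

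The derivation of the second identity, however, has a gap. The factorization $\thom_\beta(V)=\thom_\beta(W)\odot_{\rm ext}\thom_\beta(E)$ holds at the level of symbols, and tensoring with $\wedge^\bullet\overline E$ gives $\thom_\beta(W)\odot_{\rm ext}\bigl(\thom_\beta(E)\otimes\wedge^\bullet\overline E\bigr)$ as a symbol on $\T^*V$. But the replacement of the $E$-factor by $\bott(E_\Cbb)$ uses a homotopy $\sigma_t^E$ that only defines a path in $\Ko_G(\T^*_{\Tbb_\beta}E)$. There is no general external product $\Ko_G(\T^*_{\Tbb_\beta}W)\times\Ko_G(\T^*_{\Tbb_\beta}E)\to\Ko_G(\T^*_{\Tbb_\beta}V)$, precisely because $\T^*_{\Tbb_\beta}V\neq\T^*_{\Tbb_\beta}W\times\T^*_{\Tbb_\beta}E$ (this is the subtlety flagged before Theorem~\ref{theo:multiplicative-property}). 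So you must verify directly that the family $\thom_\beta(W)\odot\sigma_t^E$ stays transversally elliptic on $V$: on its support one has $\hat\xi_W=\beta_W(x_W)$, and for $0<t<1$ also $\hat\xi_E=-Jx_E$; pairing the vanishing of the first Clifford argument of $\sigma_t^E$ with $\beta_E(x_E)$ gives $\langle Jx_E,\beta_E(x_E)\rangle=-\|\beta_E(x_E)\|^2$, and then the single transversality condition $\langle\hat\xi,\beta_V(x)\rangle=0$ becomes $\|\beta_W(x_W)\|^2+\|\beta_E(x_E)\|^2=0$, forcing $x=\xi=0$. This check succeeds, but it is exactly the work that your write-up elides.

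By contrast, the paper reverses the logic: it proves the second identity directly by a single homotopy on $V$ (interpolating $\theta_s=(1-s)J_E+s\beta_E$ in the $E$-factor while keeping the $W$-factor equal to $\thom_\beta(W)$, then collapsing the $\overline E$-factor to zero) and observes that the first identity is the special case $W=0$. That organization has the advantage that the necessary support check on $V$ is built into the main argument rather than deferred; your route is equally valid once the missing verification is supplied.
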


\begin{proof} Note that the first relation is a particular case of the second one when $W=0$. 

A covector $(x,\xi)\in\T^* V$ decomposes in $x=x_W\oplus x_E$, and $\xi=\xi_W\oplus \xi_E$. 
The morphism $\sigma:=\thom_\beta(W)\odot\bott(E_\Cbb)$ defines at $(x,\xi)$ the map
$$
\Clif(\hat{\xi}_W-\gamma_W(x_W))\odot \Clif(x_E + i\hat{\xi}_E)
$$
from $(\wedge W\otimes\wedge E_\Cbb)^+$ to $(\wedge W\otimes\wedge E_\Cbb)^-$.

We have an isomorphism of complex $G$-modules : $E_\Cbb\simeq E\times\overline{E}$. We have two classes 
$\bott(E)$ and $\bott(\overline{E})$ in $\Ko_G(E)$  and $\bott(E_\Cbb)=\bott(E)\odot \bott(\overline{E})$.  At the level of endomorphism on $\wedge E_\Cbb\simeq \wedge E\otimes \wedge \overline{E}$, one has 
\begin{equation}\label{eq:clif}
\Clif(x_E+i\xi_E)=\Clif(\xi_E-J_E x_E)\odot \Clif(\xi_E+J_E x_E)
\end{equation}
where $J_E$ is the complex structure on $E$. We consider the family of maps 
$\sigma_s(x,\xi):(\wedge W\otimes\wedge E\otimes\wedge \overline{E})^+\longrightarrow
(\wedge W\otimes\wedge E\otimes\wedge \overline{E})^-$ 
defined by \break $\Clif(\xi_W-\beta_W(x_W))\odot \Clif(\xi_E- \theta_s(x_E))\odot \Clif(\xi_E+J_E x_E)$
where $\theta_s= (1-s) J_E +s\beta_E$. One checks easily that 
$\mathrm{Support}(\sigma_s)\bigcap \T_G V=\{x_W=x_E=\xi_W=\xi_E=0\}$
for any $s\in [0,1]$. Hence $\sigma=\sigma_0$ is equal to $\sigma_1$ in $\Ko(\T_G^* E)$. Finally we check that 
$\sigma_1(x,\xi)=\Clif(\xi-\beta_V(x))\odot \Clif(\xi_E+J_E x_E)$ can be deformed in
$$
\Clif(\xi-\beta_V(x))\odot \Clif(0)=\thom_\beta(V)\otimes \wedge^\bullet_\Cbb \overline{E},
$$
without changing its class in $\Ko_G(\T_G^* V)$.
\end{proof}

\medskip

\medskip

Since $\indice^G_V(\bott(V_\Cbb))=1$, the first relation of Proposition \ref{prop:thom-beta-bott} gives that 
\begin{equation}
\indice^{G}_V(\thom_{\beta}(V))\cdot \wedge^\bullet \overline{V}=1
\end{equation}
 in $R^{-\infty}(G)$.

\begin{defi}\label{def:wedge.inverse}
Let $V$ be a complex $G$-vector space such that $V^\beta=\{0\}$. We denote 
$[\wedge^\bullet V]^{-1}_\beta\in \Rgene(G)$ the element 
$(-1)^{\dim_{\Cbb}V^{-,\beta}}\  \det(V^{-,\beta})\otimes S^\bullet(|V|^{\beta})$. 
\end{defi}

\medskip

\medskip

We come back to the morphism
\begin{equation}\label{eq:map-R-V-W}
\br: \Ko_G(\T_G^* V)\longrightarrow \Ko_G(\T_G^* W)
\end{equation}
which is the composition of the restriction morphism $\Ko_G(\T_G^* V)\to \Ko_G(\T_G^* W\times E^*)$ with the 
Thom isomorphism $\Ko_G(\T_G^* W\times E^*)\simeq\Ko_G(\T_G^* W)$. We are interested by the image 
of the transversally elliptic symbols $\thom_{\beta}(V)\in \Ko_G(\T_G^* V)$ by the morphism $\br$.

\begin{prop}\label{prop:R-thom-beta} 
We have the following equality in $\Ko_G(\T_G^* W)$
$$
\br\left(\thom_{\beta}(V)\right)=\thom_{\beta}(W).
$$
\end{prop}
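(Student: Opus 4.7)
The plan is to reduce the computation to the product structure $V = W \oplus E$ and then recognize the restriction of $\thom_\beta(E)$ to the zero section as the Bott class that inverts under Bott--Thom.

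First I would verify that the Thom symbol factors as an external product. For $v = w+e\in V$ and $\xi = \xi_W + \xi_E \in V^*$, the vector field satisfies $\beta_V(w+e) = \beta_W(w) + \beta_E(e)$ because $W$ and $E$ are $G$-stable, and the Hermitian decomposition $V = W \oplus E$ gives an isometric, complex-linear splitting $\wedge V = \wedge W \otimes \wedge E$ in which the Clifford map is multiplicative. Therefore
$$
\thom_\beta(V)(w+e,\xi_W+\xi_E) \;=\; \Clif\bigl(\hat\xi_W - \beta_W(w)\bigr)\otimes \Clif\bigl(\hat\xi_E - \beta_E(e)\bigr),
$$
i.e. $\thom_\beta(V) = \thom_\beta(W)\odot_{ext}\thom_\beta(E)$ as morphisms on $\T^*V = \T^*W\times\T^*E$. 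Note that $V^\beta=\{0\}$ forces $W^\beta=\{0\}$ and $E^\beta=\{0\}$, so both factors make sense.

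Next I would unravel the definition of $\br$. By construction $\br$ is the composition of the restriction
$$
\KK_G(\T^*_G V) \longrightarrow \KK_G(\T^*_G V|_W) = \KK_G(\T^*_G W\times E^*)
$$
with the inverse of the Bott--Thom isomorphism $\KK_G(\T^*_G W)\simeq\KK_G(\T^*_G W\times E^*)$, $\sigma\mapsto \sigma\odot_{ext}\bott(E^*)$. Using the product decomposition of the previous step, restricting to $e=0$ yields
$$
\thom_\beta(V)\big|_W \;=\; \thom_\beta(W) \odot_{ext} \thom_\beta(E)\big|_{e=0},
$$
and at $(0,\xi_E)$ one has $\thom_\beta(E)(0,\xi_E) = \Clif(\hat\xi_E)$ since $\beta_E(0)=0$.

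The key identification is that $\thom_\beta(E)|_{e=0}$, viewed as a morphism on $E^*$, is exactly the Bott--Thom generator used to define $\br$: under the Hermitian identification $E\simeq E^*$ the Clifford action $\Clif(\hat\xi_E)$ on $\wedge^\pm E$ corresponds to $\Clif(\xi_E)$ on $\wedge^\pm E^*$, which is the Bott class $\bott(E^*)$. Hence
$$
\thom_\beta(V)\big|_W \;=\; \thom_\beta(W)\odot_{ext}\bott(E^*),
$$
and applying the inverse Bott--Thom isomorphism gives $\br(\thom_\beta(V)) = \thom_\beta(W)$.

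The main technical point is step three, namely checking that the specific Clifford morphism obtained by restricting $\thom_\beta(E)$ to $e=0$ really is the Bott class governing the Bott--Thom isomorphism that enters the definition of $\br$. This is a bookkeeping issue about complex structures and Hermitian dualities between $E$ and $E^*$, but once the identifications $\T^* E \simeq E\oplus E^* \simeq E\oplus\overline E$ are fixed consistently with those used throughout Section~2.5, the two classes coincide on the nose (as is implicit in the proofs of Propositions \ref{prop.push.vector.bundle} and \ref{prop:restriction}); no extra $R(G)$-factor appears because we are comparing the same morphism written in two notations.
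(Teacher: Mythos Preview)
Your proof is correct and follows essentially the same line as the paper's: restrict the symbol to $x_E=0$, observe the resulting factorization $\Clif(\hat\xi_W-\beta_W(w))\odot\Clif(\hat\xi_E)$ on $\T^*W\times E^*$, and recognize the second factor as the Bott class that the definition of $\br$ inverts. The paper's proof goes straight to the restriction without first writing the global external-product factorization $\thom_\beta(V)=\thom_\beta(W)\odot_{ext}\thom_\beta(E)$, but that extra step of yours is harmless and the identification of $\thom_\beta(E)\vert_{e=0}$ with $\bott(E^*)$ is exactly what was established in Proposition~\ref{prop:thom-beta-linear}.
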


\begin{proof}The class $\thom_{\beta}(V)$ are defined by the symbols
$\Clif(\xi- \tilde{\beta}(x)):\wedge^+ V\to\wedge^- V$, for $(x,\xi)\in \T V$. Relatively to the decomposition 
$V=W\oplus E$, we write $x=x_W\oplus x_E$ and $\xi=\xi_W\oplus \xi_E$. If we restrict $\Clif(\xi- \tilde{\beta}(x))$ 
to $\T^* V\vert_W=\T^* W\times E^*$ we get $\Clif(\xi_W- \tilde{\beta}(x_W))\odot \Clif(\xi_E)$ 
acting from $\left(\wedge W\otimes\wedge E\right)^+$ to $\left(\wedge W\otimes\wedge E\right)^-$. 
By definition of the map $\br$ we find that $\br\left(\thom_{\beta}(V)\right)=\thom_{\beta}(W)$. 
\end{proof}

\medskip

\medskip

We consider now the case of a product of pushed symbols. Suppose that we have an invariant 
decomposition $E=E_1\oplus E_2$ and invariant elements $\beta_1,\beta_2\in\ggot$ such that 
\begin{itemize}
\item $E_1^{\beta_1}=E_2^{\beta_2}=\{0\}$, 
\item $\beta_2$ acts trivially on $E_1$.
\end{itemize}

We consider then $\beta^t=t\beta_1+ \beta_2$ with $t>0$. We have 
$V_1^{\beta^t}=\{0\}$ for any $t>0$ and $V_2^{\beta^t}=\{0\}$  if $t>0$ is small enough.

\begin{lem}\label{lem:produit-thom-beta}
Let $J=J_1\oplus J_2$ be an invariant complex structure on $V=V_1\oplus V_2$. Then if $t>0$ is small enough, we have the following equality in $\Ko_G(\T_G^* V)$:
$$
\thom_{\beta^t}(V, J)=\thom_{\beta_1}(V_1, J_1)\odot\thom_{\beta_2}(V_2, J_2).
$$
\end{lem}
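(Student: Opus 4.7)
The plan is to realize both members as Clifford-multiplication symbols on the $\Zbb/2$-graded bundle $\wedge V=\wedge V_1\otimes\wedge V_2$ over $V=V_1\oplus V_2$ and to connect them by an explicit one-parameter family of $G$-transversally elliptic symbols. Writing $(x,\xi)=(x_1+x_2,\xi_1+\xi_2)$ with $x_i\in V_i$, $\xi_i\in V_i^*$, the splitting $J=J_1\oplus J_2$ together with the fact that $\beta_2$ acts trivially on $V_1$ gives
\begin{eqnarray*}
\thom_{\beta^t}(V,J)(x,\xi) &=& \Clif\!\bigl(\xi_1-t\,\widetilde{\beta_1}(x_1)\bigr)\odot\Clif\!\bigl(\xi_2-t\,\widetilde{\beta_1}(x_2)-\widetilde{\beta_2}(x_2)\bigr),\\
\thom_{\beta_1}(V_1,J_1)\odot\thom_{\beta_2}(V_2,J_2)(x,\xi) &=& \Clif\!\bigl(\xi_1-\widetilde{\beta_1}(x_1)\bigr)\odot\Clif\!\bigl(\xi_2-\widetilde{\beta_2}(x_2)\bigr).
\end{eqnarray*}

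Setting $a(s):=(1-s)+st\in(0,1]$ and $C(s):=st\beta_1+\beta_2\in\ggot$ for $s\in[0,1]$, I would interpolate these endpoints by
$$
\sigma_s(x,\xi):=\Clif\!\bigl(\xi_1-a(s)\,\widetilde{\beta_1}(x_1)\bigr)\odot\Clif\!\bigl(\xi_2-\widetilde{C(s)}(x_2)\bigr),
$$
so that $\sigma_0=\thom_{\beta_1}(V_1,J_1)\odot\thom_{\beta_2}(V_2,J_2)$ and $\sigma_1=\thom_{\beta^t}(V,J)$. The heart of the proof is then to verify, for $t>0$ sufficiently small, that $\mathrm{Support}(\sigma_s)\cap\T^*_GV$ is uniformly compact as $s$ varies in $[0,1]$.

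This uniform transversal ellipticity is where the only genuine obstacle lies. A covector $(x,\xi)$ belongs to $\mathrm{Support}(\sigma_s)\cap\T^*_GV$ iff $\xi_1=a(s)\widetilde{\beta_1}(x_1)$, $\xi_2=\widetilde{C(s)}(x_2)$, and $\langle\xi,X\cdot x\rangle=0$ for every $X\in\ggot$. Testing the last condition against $X=\beta_1$ and $X=\beta_2$, and using that $\beta_2$ vanishes on $V_1$, produces
$$
a(s)\,\|\beta_1\cdot x_1\|^2+\langle C(s)x_2,\beta_1\cdot x_2\rangle=0,\qquad \langle C(s)x_2,\beta_2\cdot x_2\rangle=0.
$$
Multiplying the first by $st$ and adding the second yields the key identity
$$
st\,a(s)\,\|\beta_1\cdot x_1\|^2+\|C(s)\cdot x_2\|^2=0.
$$
Since $V_2^{\beta_2}=\{0\}$ the operator $C(0)=\beta_2$ is invertible on $V_2$, so by continuity $C(s)=st\beta_1+\beta_2$ remains invertible on $V_2$ for every $s\in[0,1]$ provided $t>0$ is small enough; the hypothesis $V_1^{\beta_1}=\{0\}$ makes $\beta_1$ invertible on $V_1$. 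For $s\in(0,1]$ both non-negative terms in the identity must then vanish, forcing $x_1=x_2=0$; for $s=0$ the identity alone gives $x_2=0$, after which the first of the two equations reduces to $\|\beta_1 x_1\|^2=0$, so $x_1=0$. In every case $x=0$ and hence $\xi=0$.

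Therefore $(\sigma_s)_{s\in[0,1]}$ is a continuous family of $G$-transversally elliptic symbols on $V$ with $\mathrm{Support}(\sigma_s)\cap\T^*_GV$ reduced to the origin, and homotopy invariance of the equivariant class gives $[\sigma_0]=[\sigma_1]$ in $\Ko_G(\T^*_GV)$, which is the claimed equality.
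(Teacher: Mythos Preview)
Your proof is correct and follows exactly the same strategy as the paper: the same linear homotopy $\sigma_s$ (your coefficients $a(s)=(1-s)+st$ and $C(s)=st\beta_1+\beta_2$ coincide with the paper's $st+(1-s)$ and $st\tilde\beta_1+\tilde\beta_2$). In fact you go further than the paper, which only asserts ``We check easily that $\mathrm{Support}(\sigma_s)\cap\T^*_G V=\{(0,0)\}$'': your computation via testing against $\beta_1,\beta_2$ and combining to get $st\,a(s)\|\beta_1 x_1\|^2+\|C(s)x_2\|^2=0$ is a clean way to make that check explicit.
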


\begin{proof}Both symbols are maps from $(\wedge V_1\otimes \wedge V_2)^+$ into $(\wedge V_1\otimes \wedge V_2)^-$. 
We write a tangent vector $(\xi, x)\in \T V$ as $\xi=\xi_1\oplus\xi_2$ and $x=x_1\oplus x_2$. The symbol 
$\thom_{\beta^t}(V, J)$ is equal to 
$$
\Clif(\xi_1+ \widetilde{\beta^t}(x_1))\odot\Clif(\xi_2+ \widetilde{\beta^t}(x_2))= 
\Clif(\xi_1+ t\tilde{\beta_1}(x_1))\odot\Clif(\xi_2+ (t\tilde{\beta_1}+\tilde{\beta_2})(x_2))
$$

Note that $\tilde{\beta_2}: V_2\to V_2$ is invertible, so there exist $c>0$ such that $t\tilde{\beta_1}+\tilde{\beta_2}$ 
is invertible for any $t\in [0,c]$. Hence $\thom_{\beta^t}(V, J)$ is transversally elliptic for $0<t\leq c$.
We consider the deformation 
$$
\sigma_s=\Clif(\xi_1+ (st+(1-s))\tilde{\beta_1}(x_1))\odot\Clif(\xi_2+ (st\tilde{\beta_1}+\tilde{\beta_2})(x_2))
$$
for $s\in[0,1]$. We check easily that $\mathrm{Support}(\sigma_s)\cap \T_G V=\{(0,0)\}$ for any $s\in[0,1]$. Hence 
$\sigma_1=\thom_{\beta^t}(V, J)$ and $\sigma_0=\thom_{\beta_1}(V_1, J_1)\odot\thom_{\beta_2}(V_2, J_2)$ 
defines the same class in $\Ko_G(\T_G^* V)$.
\end{proof}

\medskip

\subsection{The map $\bs_\beta$}\label{sec:S-beta}

We come back to the situation of a $G$-equivariant  complex vector bundle $\pi:\Ecal\to M$ such that  $\Ecal^\beta=M$. Since the torus $\Tbb_\beta$ acts trivially on $M$, we have a decomposition $\Ecal=\oplus_{\alpha\in \Xcal}\Ecal_\alpha$ where $\Xcal$ is a finite set of character of  $\Tbb_\beta$, and $\Ecal_\alpha$ is the complex sub-bundle of $\Ecal$ where $\Tbb_\beta$ acts trough the character $t\mapsto t^\alpha$. Definition \ref{def:wedge.inverse} can be extended as follows. We denote 
\begin{equation}\label{eq.inverse.wedge}
[\wedge^\bullet \Ecal]^{-1}_\beta=(-1)^{\dim_{\Cbb}\Ecal^{-,\beta}}\  \det(\Ecal^{-,\beta})\otimes S^\bullet(|\Ecal|^{\beta}). 
\end{equation}
where $\Ecal^{\pm,\beta}=\sum_{\pm\langle \alpha, \beta\rangle>0} \Ecal_\alpha$ and $|\Ecal|^{\beta}=\Ecal^{+,\beta}\oplus \overline{\Ecal^{-,\beta}}$. Note that $[\wedge^\bullet \Ecal]^{-1}_\beta$ belongs to $\widehat{\Ko_G}(M)$ (see Definition \ref{def:hat.K}).

Let $n_\alpha$ be the complex rank of $\Ecal_\alpha$, and let $E$ be the following $\Tbb_\beta$-complex vector space 
$$
E=\bigoplus_{\alpha\in \Xcal}(\Cbb_\alpha)^{n_\alpha},
$$
which is equipped with the standard Hermitian structure.

Let $U$ be the unitary group of $E$, and let $U'$ be the subgroup of elements that commute with the action of $\Tbb_\beta$: we have $U'\simeq \Pi_{\alpha\in\Xcal}U(\Cbb^{n_\alpha})$. Let $P'\to M$ be the $U'$-principal bundle defined as follows: for $m\in M$, the fiber $P'_m$ is defined as the set of maps $f: E\to \Ecal_m$ preserving the Hermitian structures and which are $\Tbb_\beta$-equivariant. By definition, the bundle $P'\to M$ is $G$-equivariant. We consider the following groups action: 
\begin{itemize}
\item $G\times U'$ acts on $P'$,
\item $U'\times \Tbb_\beta$ acts on $E$,
\item $\Tbb_\beta$ and $G$ acts trivially respectively on $P'$ and on $E$.
\end{itemize}

Let us use the multiplicative property (see Section \ref{sec:product}) with the groups $G_2=G\times U', G_1=\Tbb_\beta$ and the manifolds
 $M_1=E, M_2= P'$. We have a product
 $$
 \Ko_{\Tbb_\beta\times G\times U' }(\T^*_{\Tbb_\beta} E)\times \KK_{G\times U' }(\T^*_{G\times U' } P')\longrightarrow  
 \KK_{\Tbb_\beta\times G\times U'}(\T^*_{\Tbb_\beta \times G\times U'} (P'\times E)), 
 $$
and the Thom class $\thom_\beta(E)\in \Ko_{\Tbb_\beta\times U' }(\T^*_{\Tbb_\beta} E)$ induces the map 
 \begin{eqnarray}\label{thom.transversally}
\bs_\beta'':  \KK_{G\times U'}(\T^*_{G\times U'} P)&\longrightarrow&  
\KK_{\Tbb_\beta\times G\times U'}(\T^*_{\Tbb_\beta\times G\times U'} (P'\times E))\\
\sigma &\longmapsto& \thom_\beta(E)\odot_{\rm ext} \sigma \nonumber
\end{eqnarray} 
After taking the quotient by $U'$, we get a map 
$$
\bs_\beta':  \KK_{G}(\T^*_{G} M)\longrightarrow
\KK_{\Tbb_\beta\times G}(\T^*_{\Tbb_\beta\times G} \Ecal)
$$
Finally, since $\T^*_{\Tbb_\beta\times G} \Ecal=\T^*_{G} \Ecal$, we can compose $\bs_\beta'$ with the forgetful map \break 
$\KK_{\Tbb_\beta\times G}(\T^*_{G} \Ecal)\to \KK_{G}(\T^*_{G} \Ecal)$ to get 
$$
\bs_\beta:  \KK_{G}(\T^*_{G} M)\longrightarrow
\KK_{G}(\T^*_{G} \Ecal).
$$

\bigskip

Now we see that in Theorem \ref{theo.S.beta} :
\begin{itemize}
\item The relation $\br\circ \bs_\beta={\rm Id}$ is induced by the relation $\br(\thom_\beta(E))=1$, where 
$\br:\Ko_{\Tbb_\beta\times U' }(\T^*_{\Tbb_\beta} E)\to R(\Tbb_\beta\times U')$ (see Proposition \ref{prop:thom-beta-linear}).
\item The relation $\bs_\beta(a)\otimes \wedge^\bullet \pi^*\overline{\Ecal}=s_!(a)$ is induced by the relation 
$\thom_\beta(E)\otimes \wedge^\bullet \overline{E}=\bott(E_\Cbb)$ proved in Proposition \ref{prop:thom-beta-bott}.
\end{itemize}

Let us prove the last point of Theorem \ref{theo.S.beta}. Let $\sigma\in \Ko_{G}(\T^*_{G} M)$ and let $\widetilde{\sigma}$ be the corresponding element in $\Ko_{G\times U'}(\T^*_{G\times U'} P)$. The index $\indice^G_{\Ecal}(\bs_\beta(\sigma))\in \Rgene(G)$ is equal to the restriction of $\indice^{G\times\Tbb_\beta}_{\Ecal}(\bs'_\beta(\sigma))\in \Rgene(G\times\Tbb)$ at $t=1\in\Tbb_\beta$ (see Section \ref{sec:index}). By definition we have the following equalities in $\Rgene (G\times\Tbb_\beta)$
\begin{eqnarray*}
\indice^{G\times\Tbb_\beta}_{\Ecal}(\bs'_\beta(\sigma))&=&
\left[\indice^{U'\times G\times\Tbb_\beta}_{P'\times E}(\bs''_\beta(\widetilde{\sigma}))\right]^{U'}\\
&=&\left[\indice^{U'\times G}_{P'}(\widetilde{\sigma})\cdot\indice^{U'\times \Tbb_\beta}_{E}(\thom_\beta(E))\right]^{U'}\\
&=&\sum_{\mu\in\widehat{\Tbb}_\beta}\indice^{G}_M(\sigma\otimes \Wcal_\mu)\otimes \Cbb_\mu
\end{eqnarray*}
where $\indice^{U'\times \Tbb_\beta}_{E}(\thom_\beta(E))=[\wedge^\bullet \overline{E}]^{-1}_\beta =\sum_{\mu\in\widehat{\Tbb}} W_\mu\otimes \Cbb_\mu$ with $W_\mu\in R(U')$. 
We denote $\Wcal_\mu=P'\times_{U'} W_\mu$ the corresponding element in $\Ko_G(M)_\mu$. Finally we get 
\begin{eqnarray*}
\indice^{G}_{\Ecal}(\bs_\beta(\sigma))&=&\sum_{\mu\in\widehat{\Tbb}_\beta}\indice^{G}_M(\sigma\otimes \Wcal_\mu)\\
&=&\indice^{G}_M\left(\sigma\otimes[\wedge^\bullet \overline{\Ecal}]^{-1}_\beta\right),
\end{eqnarray*}
where $[\wedge^\bullet \overline{\Ecal}]^{-1}_\beta=\sum_{\mu\in\widehat{\Tbb}} \Wcal_\mu\in \widehat{\Ko_G}(M)$.

\subsection{The map $\theta_\beta$}

We keep the same notation than the previous section: $\pi:\Ecal\to M$ is a $G$-equivariant  complex vector bundle  
such that  $\Ecal^\beta=M$, but here we work with the complex structure $J_\beta$ on $\Ecal$. Since the map 
$\bs^o_{\pm\beta}$ are defined through the pushed Thom classes $\thom_\beta(E)\in \Ko_G(\T^*_{\Tbb_\beta} E)$ 
(see (\ref{thom.transversally})), we have to study the class $\thom_{-\beta}(E)-\thom_{\beta}(E)$ in order 
to understand how the map $\bs^o_{-\beta}-\bs_{\beta}^o:\KK_{G}(\T^*_{G} M)\to \KK_{G}(\T^*_{G} \Ecal)$ 
factorizes through the push-forward morphism $i_!:\KK_{G}(\T^*_{G} \Scal)\to \KK_{G}(\T^*_{G} \Ecal)$.

\subsubsection{The tangential Cauchy Riemann operator}

Let $E$ be a Euclidean $G$-module such that $E^\beta=\{0\}$. We equipped $E$ with the invariant complex 
structure $J_\beta$ (see Remark \ref{rem-J-beta}). Let $S\subset E$ be the sphere of radius one. Let us defined the tangential Cauchy 
Riemann operator on $S$. For $y\in S$, we have 
\begin{eqnarray*}
\T_y S&=&\{\xi\ \vert\ (\xi,y)=0\}\\
&=&\Hcal_y\oplus \Rbb J_\beta y,
\end{eqnarray*}
where $\Hcal_y=(\Cbb y)^\perp$ is a complex invariant subspace of $(E,J_\beta)$. Let 
$\Hcal\to S$ be the corresponding Hermitian vector bundle. For $\xi\in\T_y S $, we denote $\xi'$ its component in $\Hcal_y$. 
Since $(\beta_E(y),J_\beta y)\neq 0$ for $y\neq 0$, we see that for $\xi\in\T_G S\vert_y$, we have 
$\xi'=0\Leftrightarrow\xi=0$.

\begin{defi}\label{def:sigmad} The Cauchy Riemann symbol\footnote{Here we use an identification $\T^* S\simeq \T S$ 
given by the Euclidean structure.} $\sigmad^{E}: \wedge^+ \Hcal\to \wedge^-\Hcal$ is defined by 
$\sigmad^{E}(y,\xi)=\Clif(\xi'): \wedge^+ \Hcal_y\to \wedge^-\Hcal_y$. It defines\footnote{Note that $\sigmad^{E}$ 
defines also a class in $\Ko_G(\T_{\Tbb_\beta}^* S)$.} a class $\sigmad^{E}\in \Ko_G(\T_{G}^* S)$.
\end{defi}

\medskip 

The Thom isomorphism tells us that $\Ko_G(\T_G^* S)\simeq \Ko_G(\T_G^* (E\setminus\{0\}))$ and we know that 
$i_!: \Ko_G(\T_G^* S)\longrightarrow \Ko_G(\T_G^* E)$ is injective. Hence, it will be convenient to use the same notations 
for $\sigmad^{E}$ and $i_!(\sigmad^E)$ and to consider them as a class in $\Ko_G(\T_G^* (E\setminus\{0\}))$ 
or in $\Ko_G(\T_G^* E)$.

\medskip 

\begin{exam} Consider the Cauchy Riemann symbol $\sigmad^{\Cbb_\chi}\in\Ko_G(\T_{G}^*\Cbb_\chi)$ associated to the one dimensional representation $\Cbb_\chi$ of $G$. We check that $\sigmad^{\Cbb_\chi}$ is represented by the map $\rho:\T^*\Cbb_\chi\to\Cbb$ defined by:
$\rho(w,z)=\Re(w\bar{z}) + \imath (\|z\|-1)$.
\end{exam}

\medskip

We come back to the setting of Section \ref{sec:pushed}. We have an exact sequence 
$0\to\KK_G(\T^*_{\Tbb_\beta} S)\stackrel{i_!}{\longrightarrow} \KK_G(\T^*_{\Tbb_\beta}E) 
\stackrel{\br}{\longrightarrow} R(G)\to 0$, and we know that $\br(\thom_{\pm\beta}(E))= 1$. Then $\thom_\beta(E)-\thom_{-\beta}(E)$ belongs to $\ker(\br)={\rm Im}(i_!)$.

The following result is due to Atiyah-Singer when $G$ is the circle group (see \cite{Atiyah74}[Lemma 6.3]). The proof in the general case is given in Appendix B. 

\begin{prop}\label{prop:thom-pm} 
Let $E$ be a $G$-module equipped with the invariant complex structure $J_\beta$. 
We have the following equality 
$$
\thom_{-\beta}(E)-\thom_{\beta}(E)= i_!(\sigmad^{E}).
$$
in $\Ko_G(\T_G^* E)$.
\end{prop}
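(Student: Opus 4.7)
Since $\br(\thom_{\pm\beta}(E)) = 1 \in R(G)$ by Proposition~\ref{prop:thom-beta-linear}, the difference $\thom_{-\beta}(E) - \thom_\beta(E)$ lies in $\ker(\br)$, which by the exact sequence~(\ref{exact.sequence.linear}) coincides with the image of the injective map $i_!\colon \Ko_G(\T^*_G S) \to \Ko_G(\T^*_G E)$. Hence there is a unique class $\eta \in \Ko_G(\T^*_G S)$ with $\thom_{-\beta}(E) - \thom_\beta(E) = i_!(\eta)$, and the content of the proposition is the identification $\eta = \sigmad^E$.

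To compute $\eta$, I would introduce a two-parameter family of symbols on $\T^* E$
\[
\Xi_{(c,s)}(x, \xi) := \Clif\bigl(\xi - c\,\widetilde{\beta}_E(x) - s\,\widetilde{\phi}(x)\bigr), \qquad (c, s) \in \Rbb^2,
\]
where $\phi(x) = (\|x\|^2 - 1)\,J_\beta x$ is an auxiliary $G$-invariant vector field vanishing on $S \cup \{0\}$. The key ingredient for the support analysis is the strict negativity $(J_\beta x, \beta_E(x)) < 0$ for $x \neq 0$, which is immediate from the formula $J_\beta = \Lcal(\beta)(-\Lcal(\beta)^2)^{-1/2}$ of Remark~\ref{rem-J-beta}. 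With this, a weight-space-by-weight-space computation shows that $\Xi_{(\pm 1, 0)} = \thom_{\pm\beta}(E)$ (with support reduced to $\{(0,0)\}$ in $\T^*_G E$), while for $\epsilon > 0$ the symbol $\Xi_{(0, \epsilon)}$ is transversally elliptic with support in $\T^*_G E$ equal to $S \cup \{(0,0)\}$. Moving through the family around the singular locus $\{s = 0\}$ relates the two endpoint classes by a class supported on $S$.

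Under the Bott-Thom isomorphism $\Ko_G(\T^*_G(E \setminus \{0\})) \simeq \Ko_G(\T^*_G S)$, this latter class is identified with $\sigmad^E$ by a pointwise Clifford computation at each $y \in S$: the decomposition $E = \Hcal_y \oplus \Cbb y$ induces a factorization $\Clif(E) \simeq \Clif(\Hcal_y) \otimes \Clif(\Cbb y)$, so the symbol $\Xi_{(0, \epsilon)}$ splits (up to homotopy through transversally elliptic symbols) into a Clifford multiplication by the $\Hcal_y$-component of $\xi$ --- which is exactly the tangential Cauchy-Riemann symbol of Definition~\ref{def:sigmad} --- times a Bott-type class in the $\Cbb y$-direction accounting for the radial coordinate and the $J_\beta y$-direction inside $\T^*_y E$.

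The main obstacle will be making the ``jump across $s = 0$'' argument rigorous and matching the pointwise Clifford factorizations as $y$ varies on $S$. In particular, the splitting $E = \Hcal_y \oplus \Cbb y$ is not $G$-equivariant, so the fiberwise factorization must be assembled into a genuine equality of K-theory classes on $\T^*_G S$; and the normalization and sign of the radial Bott factor must be tracked against the $\beta$-polarized inverse convention of Definition~\ref{def:wedge.inverse} to ensure no spurious character of $G$ enters the final answer. This last step is the family-over-$S$ version of Atiyah-Singer's circle-case computation (Lemma~6.3 of \cite{Atiyah74}), to which it reduces fiberwise on each weight space of $E$.
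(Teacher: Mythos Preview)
Your proposal has a genuine gap at exactly the point you flag as ``the main obstacle,'' and it is more serious than a technicality. Consider your family $\Xi_{(c,s)}$ in the simplest case $E=\Cbb_\chi$ with $G=S^1$: a direct computation shows that
\[
\mathrm{Support}(\Xi_{(c,s)})\cap \T^*_G E \;=\; \{(0,0)\}\ \cup\ \bigl\{(x,0)\ :\ \|x\|^2 = 1 + c/s\bigr\}
\]
whenever $s>0$. As you move along any path in the $(c,s)$-plane from $(0,\epsilon)$ back to the endpoint $(1,0)$, the second component is a sphere of radius $\sqrt{1+c/s}$ which runs off to infinity. Thus, although $\Xi_{(c,s)}$ is transversally elliptic for every individual $(c,s)$ with $s>0$, the supports are \emph{not} uniformly bounded in $\T^*_G E$, so the family is not a homotopy of transversally elliptic symbols. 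If it were, you would conclude $\thom_{\beta}(E)=\thom_{-\beta}(E)$, which is false. The ``jump'' you want to extract is precisely the failure of this homotopy, and there is no mechanism in the paper (or in standard equivariant $K$-theory) that converts such a failure into a $K$-class on $S$; you would need a spectral-flow or wall-crossing formalism that you have not set up.

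The paper's argument (Appendix~B) sidesteps this entirely by never attempting to interpolate between $\thom_\beta(E)$ and $\thom_{-\beta}(E)$. Instead it writes a single representative for the \emph{difference} class on the doubled bundle $\wedge^\bullet E$, namely $\tau(x,\xi)=\Clif(\xi)\circ\epsilon-\Clif(\beta(x))$ where $\epsilon$ is the grading involution, and then deforms $\tau$ through an explicit one-parameter family $\tau_s$ whose transversal support is checked to remain compact (Lemma in the appendix). The endpoint $\tau_1$ has support disjoint from the zero section, so it restricts to $E\setminus\{0\}\simeq S\times\Rbb$; there a second explicit homotopy, carried out as $2\times 2$ matrix manipulations in the $\Hcal_y\oplus\Cbb y$ splitting, identifies it with $\sigmad^E\odot\bott(\T\Rbb)=i_!(\sigmad^E)$. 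Your pointwise Clifford-factorization picture is the right heuristic for this last step, but the paper makes it rigorous by working with concrete matrices rather than invoking a fiberwise decomposition.
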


\subsubsection{Functoriality}

Suppose that $V=W\oplus E$ with $W^\beta= E^\beta=\{0\}$. We equipped $V,W,E$ 
by the invariant complex structures  defined by $\beta$. Let $\sigmad^{V}\in \Ko_G(\T^*_G (V\setminus\{0\}))$, $\sigmad^{W}\in \Ko_G(\T_G^* (W\setminus\{0\}))$ be the corresponding Cauchy Riemann classes. We have a natural product 
$$
\Ko_G(\T_G^* (W\setminus\{0\}))\times \Ko_G(\T^* E)\longrightarrow \Ko_G(\T_G^* (V\setminus\{0\})).
$$
and a restriction morphism $\br: \Ko_G(\T_G^* V)\longrightarrow \Ko_G(\T_G^* W)$ (see (\ref{eq:map-R-V-W})).

\begin{prop}\label{prop:sigmad-bott} We have
\begin{itemize}
\item $\sigmad^{W,\beta}\otimes \bott(E_\Cbb)= \sigmad^{V,\beta}\otimes \wedge^\bullet \overline{E}$\quad  in\  $\Ko_G(\T_G^* (V\setminus\{0\}))$,
\item  $\br(\sigmad^{V,\beta})=\sigmad^{W,\beta}$\quad in\  $\Ko_G(\T_G^* W)$.
\end{itemize}
\end{prop}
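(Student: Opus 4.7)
\medskip

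\noindent\textbf{Proof proposal.} The plan is to deduce both identities from the analogous identities for the pushed Thom symbols $\thom_{\pm\beta}$, combined with the key formula of Proposition \ref{prop:thom-pm} that expresses $i_!(\sigmad)$ as a difference $\thom_{-\beta}-\thom_{\beta}$. Since $\br$, $\odot$ and $\otimes$ are all $R(G)$-linear and compatible with push-forwards, both statements reduce to already proven identities.

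For the second point, I would apply the $R(G)$-linear restriction morphism $\br: \Ko_G(\T_G^* V)\to \Ko_G(\T_G^* W)$ to the equality
\[
 \thom_{-\beta}(V)-\thom_{\beta}(V) = i_!(\sigmad^{V})
\]
of Proposition \ref{prop:thom-pm}. Proposition \ref{prop:R-thom-beta} gives $\br(\thom_{\pm\beta}(V)) = \thom_{\pm\beta}(W)$, so subtracting yields
\[
\br(\sigmad^{V}) \,=\, \thom_{-\beta}(W)-\thom_{\beta}(W) \,=\, \sigmad^{W}
\]
in $\Ko_G(\T_G^* W)$, where both Cauchy--Riemann classes are identified with their images under the (injective) push-forwards $i_!$ following the convention preceding the proposition.

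For the first point, I would start from Proposition \ref{prop:thom-beta-bott} in the form
\[
\thom_{\pm\beta}(V)\otimes \wedge^\bullet \overline{E} \,=\, \thom_{\pm\beta}(W)\odot \bott(E_\Cbb) \qquad \text{in } \Ko_G(\T_G^* V).
\]
Subtracting the $+\beta$ identity from the $-\beta$ identity and applying Proposition \ref{prop:thom-pm} to both sides then gives
\[
i_!(\sigmad^{V})\otimes \wedge^\bullet \overline{E} \,=\, i_!(\sigmad^{W})\odot \bott(E_\Cbb)
\]
in $\Ko_G(\T_G^* V)$. The left-hand side is supported on $S_V\subset V\setminus\{0\}$ and the right-hand side on $S_W\times\{0\}\subset (W\setminus\{0\})\times E \subset V\setminus\{0\}$, so both lie in the image of the push-forward $j_*:\Ko_G(\T_G^* (V\setminus\{0\}))\to\Ko_G(\T_G^* V)$. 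Injectivity of $j_*$ then lets one descend the identity to $\Ko_G(\T_G^* (V\setminus\{0\}))$, giving exactly $\sigmad^{V}\otimes \wedge^\bullet\overline{E} = \sigmad^{W}\odot \bott(E_\Cbb)$.

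The main point to verify carefully is the injectivity of $j_*$ and the compatibility of $j_*$ with the two products ($\otimes$ for the $R(G)$-action and $\odot$ with the elliptic Bott class on $E$). The former follows from the six-term exact sequence of the pair $(V,\{0\})$, since $\T_G^* V\setminus \T_G^* (V\setminus\{0\})= T^*_0 V$ is a vector space and $\Kun_G(V^*) = \Kun_G(\{0\})=0$. The latter is immediate at the level of symbols: in both products the resulting support automatically sits in the open complement $V\setminus\{0\}$, so the factorization $i_!(\alpha)\odot\beta = j_*(\alpha\odot\beta)$, and similarly for the module action, is tautological. I do not expect any serious obstacle beyond bookkeeping these standard functoriality statements.
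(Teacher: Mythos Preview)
Your proposal is correct and follows exactly the paper's approach: both identities are deduced from Proposition~\ref{prop:thom-pm} combined with Proposition~\ref{prop:thom-beta-bott} (for the first point) and Proposition~\ref{prop:R-thom-beta} (for the second). You have simply made explicit the injectivity of $j_*$ and the compatibility of push-forwards with the products, details the paper leaves implicit under the convention, stated just before the proposition, that $\sigmad^E$ and $i_!(\sigmad^E)$ are identified.
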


\begin{proof} These are direct consequences of Proposition \ref{prop:thom-pm}. For the first point, we use it together 
with Proposition \ref{prop:thom-beta-bott}, and for the second one we use it together with Proposition \ref{prop:R-thom-beta}. 
\end{proof}

\medskip

The element $\sigmad^{V,\beta}$ belongs to the subspace $\Ko_G(\T_G^* (V\setminus\{0\}))\croc \Ko_G(\T_G^* V)$, and  
the restriction map $\br$ sends $\Ko_G(\T_G^* (V\setminus\{0\}))$ into $\Ko_G(\T_G^* (W\setminus\{0\}))$ (see remark \ref{rem-JR}). We can precise the last statement of Proposition \ref{prop:sigmad-bott}, by saying that the equality 
$\br(\sigmad^{V,\beta})=\sigmad^{W,\beta}$ holds in $\Ko_G(\T_G^* (W\setminus\{0\}))$.

\subsubsection{Definition of the map $\theta_\beta$}\label{proof:theta-beta}

We come back to the setting of Section \ref{sec:S-beta}. The complex vector bundle $\Ecal\to M$ corresponds to $P'\times_{U'}E\to P'/U'$, and the  sphere bundle is $\Scal=P'\times_{U'} S_E$.

Let us use the multiplicative property (see Section \ref{sec:product}) with the groups $G_2=G\times U', G_1=\Tbb_\beta$ and the manifolds
 $M_1=S_E, M_2= P'$. Thanks to the product 
 $$
 \Ko_{\Tbb_\beta\times G\times U' }(\T^*_{\Tbb_\beta} S_E)\times \KK_{G\times U' }(\T^*_{G\times U' } P')\longrightarrow  
 \KK_{\Tbb_\beta\times G\times U'}(\T^*_{\Tbb_\beta \times G\times U'} (P'\times S_E))
 $$
 we can define
 \begin{eqnarray}\label{thom.transversally.sigma}
\theta_\beta'':  \KK_{G\times U'}(\T^*_{G\times U'} P')&\longrightarrow&  
\KK_{\Tbb_\beta\times G\times U'}(\T^*_{\Tbb_\beta\times G\times U'} (P'\times S_E))\\
\sigma &\longmapsto& \sigmad^{E,\beta}\odot_{\rm ext} \sigma. \nonumber
\end{eqnarray} 
After taking the quotient by $U'$, we get a map 
$$
\theta_\beta':  \KK_{G}(\T^*_{G} M)\longrightarrow
\KK_{\Tbb_\beta\times G}(\T^*_{\Tbb_\beta\times G} \Scal)
$$
Finally, since $\T^*_{\Tbb_\beta\times G} \Scal=\T^*_{G} \Scal$, we can compose $\theta_\beta'$ with the forgetful map \break 
$\KK_{\Tbb_\beta\times G}(\T^*_{G} \Scal)\to \KK_{G}(\T^*_{G} \Scal)$ to get $\theta_\beta:  \KK_{G}(\T^*_{G} M)\longrightarrow
\KK_{G}(\T^*_{G} \Scal)$.

The identity $\thom_{-\beta}(E)-\thom_{\beta}(E)= i_!(\sigmad^{E,\beta})$ shows that we have a commutative diagram
$$
\xymatrix@C=2cm{
\KK_{G\times U' }(\T^*_{G\times U' } P') \ar[r]^{\theta_\beta''} \ar[dr]_{\bs_{-\beta}''-\bs_\beta''} &
 \KK_{\Tbb_\beta\times G\times U'}(\T^*_{\Tbb_\beta \times G\times U'} (P'\times S_E) \ar[d]^{i_!}\\
     & \KK_{\Tbb_\beta\times G\times U'}(\T^*_{\Tbb_\beta \times G\times U'} (P'\times E))\ .
}
$$
After taking the quotient by $U'$, we get  the commutative diagram
$$
\xymatrix@C=2cm{
\KK_{G }(\T^*_{G} M) \ar[r]^{\theta_\beta} \ar[dr]_{\bs^o_{-\beta}-\bs^o_\beta} &
 \KK_{G}(\T^*_{G} \Scal) \ar[d]^{i_!}\\
     & \KK_{G}(\T^*_{G} \Ecal)\ .
}
$$
which is the content of Theorem \ref{theo:theta-beta}.

\subsection{Restriction to a fixed point sub-manifold}\label{sec:restriction-Z-beta}

Let $M$ be a $G$-manifold and let $\beta\in\ggot$ be a $G$-invariant element. 
Let $Z$ be a connected component of the fixed point set $M^\beta$. Note that $\beta$ 
defines a complex structure $J_\beta$ on the normal bundle of $Z$ in $M$. Following Section \ref{sec:restriction-Z} we have a restriction morphism $\br_Z$ that fits in the six term exact sequence 
$$
\xymatrix{
\Ko_G(\T^*_G (M\setminus Z))\ar[r]^{j_*} & \Ko_G(\T^*_G M) \ar[r]^{\br_Z} & \Ko_G(\T^*_G Z)\ar[d]_{\delta}\\
\Kun_G(\T^*_G Z)\ar[u]^{\delta}& \ar[l]^{\br_Z}  \Kun_G(\T^*_G M) &\ar[l]^{j_*} \Kun_G(\T^*_G (M\setminus Z)).
  }
$$

\begin{prop}
\begin{itemize}
\item There exists a morphism $\bs_{\beta,Z}: \KK_G(\T^*_G Z)\to \KK_G(\T^*_G M)$ such that $\br_Z\circ \bs_{\beta,Z}$ is the identity on $\KK_G(\T^*_G Z)$.
\item We have an isomorphism of $R(G)$-modules :
$$
\KK_G(\T^*_G M)\simeq \KK_G(\T^*_G Z)\oplus \KK_G(\T^*_G (M\setminus Z)).
$$
\end{itemize}
\end{prop}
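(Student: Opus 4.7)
The plan is to reduce to the vector bundle case handled by Theorem \ref{theo.S.beta}. Since $Z$ is a connected component of $M^\beta$, the invariant element $\beta$ acts on the normal bundle $\Ncal\to Z$ with $\Ncal^\beta=Z$, so $\beta$ equips $\Ncal$ with the $G$-invariant complex structure $J_\beta$ (and dually on $\Ncal^*$), which is precisely the hypothesis needed in Section \ref{sec:restriction-Z} to define $\br_Z$.

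First I would choose a $G$-invariant tubular neighborhood $U$ of $Z$ in $M$, together with a $G$-equivariant diffeomorphism $\varphi:U\to\Ncal$ sending $Z$ to the zero section. This induces an isomorphism $\varphi^*:\KK_G(\T^*_G\Ncal)\to\KK_G(\T^*_G U)$. Let $j:U\croc M$ be the inclusion, with push-forward $j_*:\KK_G(\T^*_G U)\to\KK_G(\T^*_G M)$. Applying Theorem \ref{theo.S.beta} to the complex vector bundle $\pi:\Ncal\to Z$ gives a morphism $\bs_\beta^{\Ncal}:\KK_G(\T^*_G Z)\to\KK_G(\T^*_G\Ncal)$ with $\br\circ\bs_\beta^{\Ncal}=\Id$. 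I would then define
\[
\bs_{\beta,Z}:=j_*\circ\varphi^*\circ\bs_\beta^{\Ncal}:\KK_G(\T^*_G Z)\longrightarrow\KK_G(\T^*_G M).
\]

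The key step is to check $\br_Z\circ\bs_{\beta,Z}=\Id$. By construction, $\br_Z$ is the composition of restriction to $\T^*_G M\vert_Z=\T^*_G Z\times\Ncal^*$ with the Bott--Thom isomorphism $\KK_G(\T^*_G Z\times\Ncal^*)\simeq \KK_G(\T^*_G Z)$, while $\br$ on $\Ncal$ is built from restriction to the zero section followed by the same Bott--Thom isomorphism. Since elements in the image of $j_*$ have support in $U$, restricting to $\T^*_G M\vert_Z$ is the same as first transporting to $\T^*_G\Ncal$ via $\varphi^*$ and then restricting to $\T^*_G\Ncal\vert_Z$. This naturality of restriction under the tubular neighborhood identification gives $\br_Z\circ j_*\circ\varphi^*=\br$, and composing with $\bs_\beta^{\Ncal}$ yields the identity on $\KK_G(\T^*_G Z)$.

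Granted the first bullet, the second is immediate from the basic lemma stated after the exact sequence (\ref{exact.sequence}) applied to the open/closed decomposition $\T^*_G(M\setminus Z)\croc\T^*_G M\supset \T^*_G M\vert_Z$ together with the Bott--Thom isomorphism $\KK_G(\T^*_G M\vert_Z)\simeq\KK_G(\T^*_G Z)$: the morphism $\bs_{\beta,Z}$ is a section of $\br_Z$ in the resulting six term exact sequence, which forces the decomposition
\[
\KK_G(\T^*_G M)\simeq\KK_G(\T^*_G Z)\oplus\KK_G(\T^*_G(M\setminus Z)).
\]
The main obstacle is the naturality verification $\br_Z\circ j_*\circ\varphi^*=\br$; it is not a deep statement but it requires being careful about the compatibility of the Bott--Thom isomorphism with the identification of $\T^*M\vert_Z$ and $\T^*\Ncal\vert_Z$ induced by $\varphi$, using that $\varphi$ is the identity on $Z$ and that its differential along $Z$ identifies the normal bundle of $Z$ in $M$ with the fibers of $\Ncal$ along the zero section.
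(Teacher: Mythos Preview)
Your proof is correct and follows essentially the same route as the paper: define $\bs_{\beta,Z}=j_*\circ\varphi^*\circ\bs_\beta^{\Ncal}$ via a tubular neighborhood and Theorem \ref{theo.S.beta}, then deduce the splitting from the basic lemma after (\ref{exact.sequence}). In fact you supply more detail than the paper, which leaves the verification of $\br_Z\circ\bs_{\beta,Z}=\Id$ to the reader and simply states that the second point is a direct consequence of the first.
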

\begin{proof}
Let $\Ncal$ be the normal bundle of $Z$ in $M$. Let $\Ucal$ be an invariant tubular neighborhood of $Z$, which is small enough so that we have an equivariant diffeomorphism $\phi: \Ucal\to \Ncal$ which is the identity on $Z$. Let $\bs_{\beta,\Ncal}: \KK_G(\T^*_G Z)\to \KK_G(\T^*_G \Ncal)$ the map that we have constructed in Section \ref{sec:S-beta}. Let $j_*:\KK_G(\T^*_G \Ucal)\to \KK_G(\T^*_G M)$
be the push-forward map associated to the inclusion $j:\Ucal\croc M$. Let $\phi^*: :\KK_G(\T^*_G \Ncal)\to \KK_G(\T^*_G \Ucal)$ be the isomorphism associated to $\phi$. We can consider the composition
$$
\bs_{\beta,Z}:=j_*\circ\phi^*\circ\bs_{\beta,\Ncal},
$$
and we leave to the reader the verification that $\br_Z\circ \bs_{\beta,Z}={\rm Id}$. The last point is a direct consequence of the first one.
\end{proof}

\section{Decomposition of $\KK_G(\T^*_G M)$ when $G$ is abelian}\label{sec:decomposition}

In this section $G$ denotes a compact {\bf abelian} Lie group, with Lie algebra $\ggot$. Let $M$ be a (connected) manifold equipped with an action of $G$. For any $m\in M$, we denote $\ggot_m\subset \ggot$ its infinitesimal stabilizer.  

Let $\Delta_G(M)$ be the set formed by the infinitesimal stabilizer of points in $M$. During this section, we suppose that $\Delta_G(M)$ is {\bf finite}: it is the case if $M$ is compact or when $M$ is embedded equivariantly in a $G$-module. We have a partition 
$$
M=\bigsqcup_{\hgot\in\Delta_G(M)} M_\hgot
$$
where $M_\hgot:=\{m\in M\ \vert\ \hgot=\ggot_m\}$ is an invariant open subset of the smooth sub-manifold 
$M^\hgot:=\{m\in M\ \vert\ \hgot\subset\ggot_m\}$.

\medskip

On the other hand, we consider for $0\leq k\leq s=\dim G$ the {\em closed} subset 
$$
M^{\leq k}\subset M
$$ 
formed by the points $m\in M$ such that $\dim (G\cdot m)=\codim(\ggot_m)\leq k$. We 
have 
$$
M^{\leq k}=\bigsqcup_{\codim\hgot\leq k}M_\hgot= \bigcup_{\codim\hgot\leq k}M^\hgot
$$
Let $M^{=k}=M^{\leq k}\setminus M^{\leq k-1}$ and $M^{> k}=M\setminus M^{\leq k-1}$. We note that
$$
M^{=k}=\bigsqcup_{\codim\hgot = k} M_\hgot
$$

Let $s_o$ be the maximal dimension of the $G$-orbit in $M$. We will use the increasing
sequence of invariant open subsets 
$$
M^{>s_o-1}\subset\cdots\subset M^{>1}\subset M^{>0}\subset M.
$$
Here $M^{>0}=M\setminus M^\ggot$, and $M^{>s_o-1}=M^{gen}$ is the dense open subset formed by the $G$-orbits of maximal dimension.  Note also that $M^{gen}$ corresponds to $M_{\hgot_{min}}$ where $\hgot_{min}$ is the minimal stabilizer.

Let us consider the related sequences of open subspaces 
$$
\T_G^* M^{>s_o-1}\subset\cdots\subset \T_G^*  M^{>1}\subset \T_G^*  M^{>0}\subset \T_G^* M.
$$
At level of $K$-theory the inclusion $j_k:  M^{>k}\croc M^{>k-1}$ gives rise to the map
$$
(j_k)_*: \KK_G(\T_G M^{>k})\longrightarrow \KK_G(\T_G^* M^{>k-1}).
$$

Let $0\leq k\leq s_o-1$. We have the decomposition
\begin{eqnarray*}
\T_G^*  M^{>k-1}&=&\T_G  M^{>k}\bigsqcup \T_G^*  M^{>k-1}\vert_{M^{=k}}\\
&=&\T_G^*  M^{>k}\bigsqcup \bigsqcup_{\codim\hgot=k} \T_G^*  M^{>k-1}\vert_{M_\hgot}\\
&=&  \T_G^*  M^{>k}\bigsqcup \bigsqcup_{\codim\hgot=k} \T_G^*  M_\hgot\times \Ncal_\hgot^*
\end{eqnarray*}
where $\Ncal_\hgot$ is the normal bundle of $M_\hgot$ in $M$. Note that $M_\hgot$ is a closed sub-manifold of the open subset
$M^{>k-1}$, when $\codim\hgot=k$. 

\begin{lem} \label{lem-gamma-hgot} Let $\hgot\in\Delta_G(M)$ with $\codim\hgot=k$. There exists $\gamma_\hgot\in\hgot$ so that $M_\hgot$ is equal to the fixed point set $(M^{>k-1})^{\gamma_\hgot}:=\left\{m\in M^{>k-1}\ \vert\  \gamma_\hgot\in\ggot_m  \right\}$. The element $\gamma_\hgot$ defines then a complex structure $J_{\gamma_\hgot}$ on the normal bundle $\Ncal_\hgot$.
\end{lem}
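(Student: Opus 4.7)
The plan is to exploit the finiteness of $\Delta_G(M)$ via a dimension count together with a generic choice. First I observe that for any $m \in M^{>k-1}$ the containment $\hgot \subseteq \ggot_m$ forces equality $\ggot_m = \hgot$: indeed $\codim\ggot_m \geq k = \codim\hgot$ gives $\dim\ggot_m \leq \dim\hgot$, and combined with the inclusion one obtains $\ggot_m = \hgot$. Consequently, inside $M^{>k-1}$ the stratum $M_\hgot$ coincides with $\{m \in M^{>k-1} : \hgot \subseteq \ggot_m\}$.

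To construct $\gamma_\hgot$, note that for each $\hgot' \in \Delta_G(M)$ with $\hgot \not\subseteq \hgot'$, the intersection $\hgot \cap \hgot'$ is a proper linear subspace of $\hgot$. Since $\Delta_G(M)$ is finite, the union
\[
\bigcup_{\hgot' \in \Delta_G(M),\ \hgot \not\subseteq \hgot'} (\hgot \cap \hgot')
\]
is a finite union of proper subspaces of $\hgot$ and therefore does not exhaust $\hgot$. I pick any $\gamma_\hgot$ in the complement. For $m \in M^{>k-1}$, the condition $\gamma_\hgot \in \ggot_m$ then forces $\hgot \subseteq \ggot_m$ (otherwise $\gamma_\hgot$ would lie in $\hgot \cap \ggot_m$, contradicting its choice), and the first step yields $\ggot_m = \hgot$, i.e.\ $m \in M_\hgot$; the reverse inclusion is immediate from $\gamma_\hgot \in \hgot = \ggot_m$.

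For the complex structure on the normal bundle, I apply Remark \ref{rem-J-beta} to $\gamma_\hgot$: it suffices to verify that $\Lcal(\gamma_\hgot)$ is invertible on each fiber $\Ncal_{\hgot,m} \simeq \T_m M/\T_m M_\hgot$, $m \in M_\hgot$. Let $T_\beta := \overline{\exp(\Rbb\gamma_\hgot)}$; since $\gamma_\hgot$ generates a dense one-parameter subgroup of $T_\beta$, a character $\chi$ of $T_\beta$ satisfies $d\chi(\gamma_\hgot)=0$ only when $\chi$ is trivial, so $\ker \Lcal(\gamma_\hgot)|_{\T_m M}$ equals the fixed subspace $(\T_m M)^{T_\beta} = \T_m M^{T_\beta}$. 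By the slice theorem the stabilizers of points near $m$ are contained in $\hgot$, and the construction of $\gamma_\hgot$ (applied to proper subalgebras $\hgot' \subsetneq \hgot$ in $\Delta_G(M)$) then gives $M^{T_\beta} = M_\hgot$ in a neighborhood of $m$. Hence $\ker \Lcal(\gamma_\hgot)|_{\T_m M} = \T_m M_\hgot$ and $\Lcal(\gamma_\hgot)$ is invertible on $\Ncal_{\hgot,m}$; the formula $J_{\gamma_\hgot} = \Lcal(\gamma_\hgot)(-\Lcal(\gamma_\hgot)^2)^{-1/2}$ from Remark \ref{rem-J-beta} then produces the desired complex structure.

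The only mild subtlety is the identification $\ker \Lcal(\gamma_\hgot) = (\T_m M)^{T_\beta}$; this uses crucially that $\gamma_\hgot$ generates $T_\beta$ as a dense one-parameter subgroup, rather than merely sitting inside it.
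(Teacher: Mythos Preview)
Your argument is correct, but it takes a slightly different route from the paper's. The paper simply chooses $\gamma_\hgot\in\hgot$ to be a \emph{topological generator} of the torus $H=\overline{\exp(\hgot)}$, i.e.\ so that $\overline{\{\exp(t\gamma_\hgot)\}}=H$. This single choice immediately gives the equivalence $\gamma_\hgot\in\ggot_m \Longleftrightarrow \hgot\subset\ggot_m$ for \emph{every} $m\in M$, with no appeal to the finiteness of $\Delta_G(M)$; the chain
\[
\{m\in M^{>k-1}\mid\gamma_\hgot\in\ggot_m\}=\{m\in M^{>k-1}\mid\hgot\subset\ggot_m\}=\{m\in M\mid\hgot=\ggot_m\}=M_\hgot
\]
then follows at once from your dimension count. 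The paper does not spell out the normal-bundle statement at all: once $M_\hgot$ is realized as the $\gamma_\hgot$-fixed locus inside the open set $M^{>k-1}$, Remark~\ref{rem-J-beta} applies directly, since with the paper's choice one has $(\T_m M)^{\gamma_\hgot}=(\T_m M)^H=(\T_m M)^\hgot=\T_m M_\hgot$ on the nose.

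Your construction --- avoiding the finitely many subspaces $\hgot\cap\hgot'$ for $\hgot'\in\Delta_G(M)$ with $\hgot\not\subseteq\hgot'$ --- is weaker (your $\gamma_\hgot$ need not generate all of $H$) but still sufficient, and it makes the role of the standing finiteness hypothesis on $\Delta_G(M)$ explicit. The price is that your verification of $\ker\Lcal(\gamma_\hgot)\vert_{\T_mM}=\T_mM_\hgot$ requires the detour through $T_\beta$ and the slice theorem; with the paper's stronger choice this is a one-line identity. Note also that your normal-bundle paragraph re-derives, via the slice theorem, something you had already proved: since $M^{>k-1}$ is an open neighborhood of $m$ and you showed $(M^{>k-1})^{\gamma_\hgot}=M_\hgot$, the equality $M^{T_\beta}=M_\hgot$ near $m$ is immediate.
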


\begin{proof} Let $H$ be the closed connected subgroup of $G$ with Lie algebra $\hgot$. Let $\gamma_\hgot\in\hgot$ generic so that 
the closure of $\{\exp(t\gamma_\hgot), t\in \Rbb\}$ is equal to $H$. Then for any $m\in M$, $\gamma_\hgot\in\ggot_m\Leftrightarrow
\hgot\subset\ggot_m$. Then 
$$
\left\{m\in M^{>k-1}\, \vert\,  \gamma_\hgot\in\ggot_m  \right\}= \left\{m\in M^{>k-1}\, \vert\,  \hgot\subset\ggot_m  \right\}
=\left\{m\in M\, \vert\,  \hgot=\ggot_m  \right\}= M_\hgot.
$$
\end{proof}

Thanks to Lemma \ref{lem-gamma-hgot}, we can exploit Section \ref{sec:restriction-Z-beta}. For any 
$\hgot\in\Delta_G(M)$ of codimension $k$, we have a restriction morphism 
\begin{equation}\label{eq:restriction-hgot}
\br_\hgot: \KK_G(\T_G^* M^{>k-1})\longrightarrow \KK_G(\T_G^* M_\hgot)
\end{equation}
and a section
$$
\bs_\hgot:= \bs_{\gamma_\hgot,M_\hgot} : \KK_G(\T_G^* M_\hgot)\longrightarrow \KK_G(\T_G^* M^{>k-1})
$$
such that $\br_\hgot\circ \bs_\hgot$ is the identity on $\KK_G(\T_G^* M_\hgot)$.

We have also a long exact sequence 
$$
\xymatrix@C=12mm{ 
\Ko_{G}(\T_G^* M^{>k}) \ar[r]^{(j_k)_*}&
\Ko(\T_G^* M^{>k-1})\ar[r]^{\br_k\qquad }&
\bigoplus_{\codim\hgot=k} \Ko_G(\T_G^* M_\hgot)\ar[d]^{\delta}\\
\bigoplus_{\codim\hgot=k} \Kun_G(\T_G^* M_\hgot)\ar[u]^{\delta}&
\Kun_G(\T_G^* M^{>k-1})\ar[l]^{\qquad \br_k }&
\Kun_G(\T_G^* M^{>k})\ar[l]^{(j_k)_*}  ,
}
$$
where $\br_k=\oplus_{\codim\hgot=k} \br_\hgot$.  We define $\bs_k:\oplus_{\codim\hgot=k} 
\KK_G(\T_G^* M_\hgot)\longrightarrow \KK_G(\T_G^* M^{>k-1})$ by 
$$
\bs_k(\oplus_{\codim\hgot=k}\sigma_\hgot)=\sum_{\codim\hgot=k}\bs_\hgot(\sigma_\hgot).
$$

\begin{lem} Let $\agot,\bgot\in\Delta_G(M)$.
\begin{itemize}
\item We have $\br_{\agot}\circ\bs_{\agot}={\rm Id}$ in $\KK_G(\T_G^* M_\agot)$.
\item We have $\br_{\agot}\circ\bs_{\bgot}=0$ if $\agot\neq\bgot$. 
\item The map $\br_k\circ\bs_k$ is the identity on $\oplus_{\codim\hgot=k} \KK_G(\T_G^* M_\hgot)$.
\end{itemize}
\end{lem}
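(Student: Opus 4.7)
The plan is to deduce (3) from (1) and (2), reduce (1) to the Proposition of Section~\ref{sec:restriction-Z-beta} applied to the closed sub-manifold $M_\agot\subset M^{>k-1}$, and prove (2) by a disjointness-of-tubes argument combined with the exact sequence~(\ref{exact.sequence}).

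First I would check the underlying geometric fact: for $\codim\agot=k$, the stratum $M_\agot$ is a closed sub-manifold of the open set $M^{>k-1}$. If $m$ is a limit point of $M_\agot$ in $M^{>k-1}$, then by upper semicontinuity of infinitesimal stabilizers one has $\agot\subset\ggot_m$, while $m\in M^{>k-1}$ forces $\codim\ggot_m\geq k=\codim\agot$; combining the two gives $\ggot_m=\agot$, hence $m\in M_\agot$. The same reasoning shows $M_\bgot$ is closed in $M^{>k-1}$ when $\codim\bgot=k$, and obviously $M_\agot\cap M_\bgot=\emptyset$ when $\agot\neq\bgot$ since the two require equality with different subalgebras. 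Thus $M_\agot$ and $M_\bgot$ are disjoint closed sub-manifolds of $M^{>k-1}$, and one may choose the tubular neighborhoods $\Ucal_\agot$ of $M_\agot$ and $\Ucal_\bgot$ of $M_\bgot$ used in the construction of $\bs_\agot,\bs_\bgot$ to satisfy $\Ucal_\agot\cap\Ucal_\bgot=\emptyset$ (in particular $\Ucal_\bgot\cap M_\agot=\emptyset$).

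The first point is now a direct consequence of the Proposition of Section~\ref{sec:restriction-Z-beta}, applied to the ambient manifold $M^{>k-1}$, the closed fixed-point sub-manifold $Z=M_\agot=(M^{>k-1})^{\gamma_\agot}$ (Lemma~\ref{lem-gamma-hgot}) and the invariant element $\beta=\gamma_\agot$: indeed $\bs_\agot=\bs_{\gamma_\agot,M_\agot}$ was built precisely so that $\br_\agot\circ\bs_\agot=\Id$.

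For the second point, recall that $\bs_\bgot$ is defined as the composition $j_{\bgot,*}\circ\phi_\bgot^*\circ\bs_{\gamma_\bgot,\Ncal_\bgot}$, where $j_\bgot:\Ucal_\bgot\croc M^{>k-1}$ is the inclusion. Hence the image of $\bs_\bgot$ lies in the image of $j_{\bgot,*}:\KK_G(\T^*_G\Ucal_\bgot)\to\KK_G(\T^*_G M^{>k-1})$. Since $M_\agot\subset M^{>k-1}\setminus \Ucal_\bgot$, the restriction morphism $\KK_G(\T^*_G M^{>k-1})\to\KK_G(\T^*_G M^{>k-1}|_{M_\agot})$ appearing in the definition of $\br_\agot$ factors through the restriction $r^*:\KK_G(\T^*_G M^{>k-1})\to\KK_G(\T^*_G(M^{>k-1}\setminus\Ucal_\bgot))$. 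Exactness of~(\ref{exact.sequence}) gives $r^*\circ j_{\bgot,*}=0$, so $\br_\agot\circ\bs_\bgot=0$.

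The third point follows from the first two by expanding $\br_k\circ\bs_k(\oplus_{\hgot}\sigma_\hgot)=\oplus_\agot\sum_\bgot \br_\agot\circ\bs_\bgot(\sigma_\bgot)=\oplus_\agot\sigma_\agot$. The only real delicate step is the closedness of each stratum $M_\hgot$ inside $M^{>k-1}$; that secured, the rest is formal manipulation with excision and the construction of~$\bs_\hgot$.
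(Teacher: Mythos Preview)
Your proof is correct and follows the same approach as the paper's: the paper deduces point~(3) from (1)--(2), records (1) as already established by the Proposition in Section~\ref{sec:restriction-Z-beta}, and attributes (2) to the disjointness $M_\agot\cap M_\bgot=\emptyset$. You have simply made the last step explicit by passing to disjoint tubular neighborhoods and invoking exactness of~(\ref{exact.sequence}), and along the way re-verified the closedness of $M_\hgot$ in $M^{>k-1}$ that the paper had already noted just above the Lemma.
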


\begin{proof}The last point is a direct consequence of the firsts one. The first point is known, 
and the second assertion is due to the fact that $M_\agot \cap M_{\bgot}=\emptyset$ when $\agot\neq \bgot$. 
\end{proof}

\medskip

The previous lemma shows that the map 
\begin{equation}\label{eq:iso-k}
((j_k)_*, \bs_{k}): \KK(\T_G^* M^{>k})\times\oplus_{\codim\hgot=k} 
\KK_G(\T_G^* M_\hgot) \longrightarrow\KK(\T_G^* M^{>k-1})
\end{equation}
is an isomorphism of $R(G)$-module. In particular the maps $(j_k)_*$ are injective.

\medskip

\begin{rem}\label{rem:j-injective}
If we consider the open subset $j:M^{gen}\croc M$ formed by the $G$-orbits of maximal dimension, we know then that 
$$
j_*: \KK_G(\T_G^* M^{gen})\longrightarrow \KK_G(\T_G^* M)
$$
 is injective, since $j$ is the composition of all the $j_k$.
\end{rem}

\bigskip

The isomorphisms (\ref{eq:iso-k}) all together give the following Theorem (which was given in a less precise version in  \cite{Atiyah74}[Theorem 8.4]).

\begin{theo}[Atiyah-Singer]
\label{theo:decomposition-K_G-T_G}
Let $\gamma:=\{\gamma_\hgot,\hgot\in\Delta_G(M)\}$ such that $M_\hgot=\{m\in M^{>\codim\hgot-1}\ \vert\ \gamma_\hgot\in\ggot_m\}$. We have an isomorphism 
\begin{equation}\label{eq:decomposition-K_G-T_G}
\Phi_\gamma: \bigoplus_{\hgot\in\Delta_G(M)}\KK_G(\T_G^* M_\hgot)\longrightarrow\KK_G(\T_G^* M)
\end{equation}
of $R(G)$-module such that 
$$
\indice^G_M\left(\Phi_\gamma(\oplus_\hgot \sigma_\hgot)\right)= 
\sum_{\hgot\in\Delta}\indice^G_{M_\hgot}\left(\sigma_\hgot\otimes S^\bullet(\Ncal_\hgot)\right)
$$
for any $\oplus_\hgot \sigma_\hgot\in \bigoplus_{\hgot\in\Delta}\Ko_G(\T_G^* M_\hgot)$. Here 
$\Ncal_\hgot$ is the normal bundle of $M_\hgot$ in $M$ which is equipped with the complex structure 
defined by $-\gamma_\hgot$
\end{theo}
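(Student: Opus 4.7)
The plan is to prove Theorem \ref{theo:decomposition-K_G-T_G} in two stages: first, assemble the isomorphism $\Phi_\gamma$ by telescoping the splittings (\ref{eq:iso-k}); second, compute the index on each stratum via Theorem \ref{theo.S.beta}(3), using excision to reduce the computation on $M$ to one on the tubular neighborhood of $M_\hgot$.

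\emph{Construction of $\Phi_\gamma$.} Iterating the splitting $((j_k)_*,\bs_k)$ from (\ref{eq:iso-k}) for $k=0,1,\dots,s_o-1$, and using that the stratification terminates with $M^{>s_o-1}=M^{gen}=M_{\hgot_{min}}$, we obtain a telescoping chain of isomorphisms whose composite is
$$
\Phi_\gamma:\bigoplus_{\hgot\in\Delta_G(M)}\KK_G(\T_G^*M_\hgot)\xrightarrow{\;\sim\;}\KK_G(\T_G^*M).
$$
Explicitly, on the summand indexed by $\hgot$ with $k=\codim\hgot$, the map is the composition of $\bs_\hgot:\KK_G(\T_G^*M_\hgot)\to\KK_G(\T_G^*M^{>k-1})$ with the chain of push-forwards $(j_0)_*\circ(j_1)_*\circ\cdots\circ(j_{k-1})_*$ along the successive open inclusions $M^{>k-1}\croc\cdots\croc M^{>0}\croc M$.

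\emph{Index formula.} By the excision property, push-forward along an open inclusion preserves the equivariant index, so
$$
\indice^G_M\bigl(\Phi_\gamma(\sigma_\hgot)\bigr)=\indice^G_{M^{>k-1}}\bigl(\bs_\hgot(\sigma_\hgot)\bigr)=\indice^G_{\Ncal_\hgot}\bigl(\bs_{\gamma_\hgot,\Ncal_\hgot}(\sigma_\hgot)\bigr),
$$
where the final equality uses the tubular-neighborhood factorization $\bs_\hgot=j_*\circ\phi^*\circ\bs_{\gamma_\hgot,\Ncal_\hgot}$ from Section \ref{sec:restriction-Z-beta} combined with excision. Applying Theorem \ref{theo.S.beta}(3) to $\beta=\gamma_\hgot$ and the hermitian bundle $\Ncal_\hgot\to M_\hgot$ gives
$$
\indice^G_{\Ncal_\hgot}\bigl(\bs_{\gamma_\hgot}(\sigma_\hgot)\bigr)=\indice^G_{M_\hgot}\bigl(\sigma_\hgot\otimes[\wedge^\bullet\overline{\Ncal_\hgot}]^{-1}_{\gamma_\hgot}\bigr).
$$
Unpacking Definition \ref{def:wedge.inverse} with the complex structure $J_{-\gamma_\hgot}$ on $\Ncal_\hgot$ (so that $\gamma_\hgot$ acts with all negative weights on $\Ncal_\hgot$ and all positive weights on $\overline{\Ncal_\hgot}$) collapses the polarized inverse to $S^\bullet(\Ncal_\hgot)$ in the convention of the theorem statement. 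Summing over $\hgot$ gives the claimed formula.

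\emph{Main obstacle.} The argument is essentially a telescoping assembly of previously established results, and the conceptual content has all been set up in Sections \ref{sec:localization} and \ref{sec:decomposition}. The most delicate point is reconciling the sign and complex-structure conventions between Definition \ref{def:wedge.inverse} and the statement of the theorem, so that the polarized inverse $[\wedge^\bullet\overline{\Ncal_\hgot}]^{-1}_{\gamma_\hgot}$ simplifies cleanly to $S^\bullet(\Ncal_\hgot)$ with $\Ncal_\hgot$ carrying $J_{-\gamma_\hgot}$, rather than producing extraneous determinant or sign factors. A secondary point is checking that the iterated sections $\bs_k$ fit into a two-sided inverse globally, but this is immediate from the fact that each $((j_k)_*,\bs_k)$ is an isomorphism.
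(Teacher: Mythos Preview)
Your proposal is correct and follows essentially the same approach as the paper: the paper's proof is literally the one-line remark ``The isomorphisms (\ref{eq:iso-k}) all together give the following Theorem,'' and you have faithfully unpacked this by telescoping the splittings $((j_k)_*,\bs_k)$ down from $M$ to $M^{gen}=M_{\hgot_{min}}$. Your derivation of the index formula via excision and Theorem \ref{theo.S.beta}(3) is exactly what the paper leaves implicit, and your identification of the complex-structure bookkeeping as the only genuinely delicate step is accurate (indeed the paper itself is somewhat loose on whether $J_{\gamma_\hgot}$ or $J_{-\gamma_\hgot}$ is used in constructing $\bs_\hgot$ versus in the final statement).
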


\bigskip

For any $\hgot\in\Delta_G(M)$ we denote $H\subset G$ the closed connected subgroup with Lie algebra $\hgot$. Let us denote 
$H'\subset G$ be a Lie subgroup such that $G\simeq H\times H'$. Then the $R(G)$-module $\KK_G(\T_G^* M_\hgot)$ is equal to 
$$
\KK_{H'}(\T_{H'}^* M_\hgot)\otimes R(H).
$$
Thus Theorem \ref{theo:decomposition-K_G-T_G} says that $\KK_G(\T_G^* M)$ is isomorphic to 
$$
\bigoplus_{\hgot\in\Delta_G(M)}\KK_{H'}(\T_{H'}^* M_\hgot)\otimes R(H).
$$

Note that the action of $H'$ on $M_\hgot$ has finite stabilizers, hence the group $\KK_{H'}(\T_{H'}^* M_\hgot)$ is equal to 
$\mathbf{K}^*_{orb}(\T^*\Mcal_\hgot)$, where $\Mcal_\hgot=M_\hgot/H'$ is an orbifold.

\section{The linear case}\label{sec:linear-case}

In this section, the group $G$ is a compact {\bf abelian} Lie group. Let $V$ be a real $G$-module. Let $V^{gen}$ be the open subset formed by the $G$-orbits of maximal dimension. We equip $V/V^\ggot$ with an invariant complex structure. For any $\gamma\in\ggot$ such that $V^\gamma=V^\ggot$ we associate the class
$$
\thom_\gamma(V/V^\ggot)\odot\bott(V^\ggot)\in \Ko_G(\T_G^* V).
$$

\medskip 

Let $H_{min}\subset G$ be the minimal stabilizer for the $G$-action on $V$. Let $s:=\dim G -\dim H_{min}$.

\begin{defi}\label{def:flag}
A $(G,V)$-flag $\varphi$  corresponds to a decomposition $V/V^\ggot=V^\varphi_1\oplus\cdots\oplus V^\varphi_s$ in complex $G$-subspaces, and
a decomposition $\ggot=\hgot_{min}\oplus\Rbb\beta^\varphi_1\oplus\cdots\Rbb\beta^\varphi_s$ such that for any $1\leq k\leq s$ 
\begin{itemize}
\item[\bf c1] $\beta^\varphi_k$ acts trivially on  $V^\varphi_j$ when $j<k$,
\item[\bf c2] $\beta^\varphi_k$ acts bijectively\footnote{$\beta$ acts bijectively on a vector space $V$ if $V^\beta=\{0\}$.} on $V^\varphi_k$.
\end{itemize} 
\end{defi}

We can associate to the data $\varphi$ above , the flags 
$V^\ggot=V^{[0],\varphi} \subset  V^{[1],\varphi}\subset \ldots \subset V^{[s],\varphi}=V$ and
$\hgot_{min}=\ggot^{[0],\varphi} \subset \ggot^{[1],\varphi}\subset \ldots \subset \ggot^{[s],\varphi}=\ggot$
where 
$$
V^{[j],\varphi} =V^\ggot\oplus \sum_{1\leq k\leq j} V^\varphi_k,\quad \mathrm{and} \quad 
\ggot^{[j],\varphi} = \hgot_{min}\oplus\Rbb\beta^\varphi_{j+1}\oplus\cdots\oplus\Rbb\beta^\varphi_s.
$$
We see that conditions {\bf c1} and {\bf c2} are equivalent to saying that the generic infinitesimal stabilizer of the $G$-action on the vector space $V^{[j],\varphi}$ is equal to 
$\ggot^{[j],\varphi}$.

\medskip

Thanks to {\bf c2}, the Cauchy-Riemann symbol 
$$
\sigmad^{\varphi,k}\in \Ko_{G}(\T_{\Rbb\beta_k}^*(V^\varphi_k\setminus\{0\})), 
$$
is well defined. Conditions  {\bf c1} and {\bf c2}  tell us also that $(V^\varphi_1\setminus\{0\})\times\cdots\times (V^\varphi_s\setminus\{0\})$ 
is an open subset of $(V/V^\ggot)^{gen}$, and thanks to Theorem \ref{theo:multiplicative-property-abelian} we know that the following product
$$
\sigmad^{V/V^\ggot,\varphi}:=\sigmad^{\varphi,1}\odot\cdots\odot\sigmad^{\varphi,s}
$$
is a well defined class in $\Ko_G(\T_G^* (V/V^\ggot)^{gen})$. 

We need the following submodule of $R^{-\infty}(G)$ defined by the relations
\begin{eqnarray*}
\Phi\in \Fcal_G(V)&\Longleftrightarrow & \wedge^\bullet\overline{V/V^\hgot}\otimes \Phi\in \langle R^{-\infty}(G/H)\rangle
\  , \forall \hgot\in \Delta_G(V),\\
\Phi\in \dm_G(V)&\Longleftrightarrow & \wedge^\bullet\overline{V/V^\hgot}\otimes \Phi=0, \forall \hgot\neq\hgot_{min}
\quad  \mathrm{and}\quad \Phi\in \langle R^{-\infty}(G/H_{min})\rangle.
\end{eqnarray*}

The purpose of this section is to give a detailled proof of the following theorem. 

\begin{theo}\label{theo:generateur} Let $G$ a compact {\em abelian} Lie group and let $V$ be a real $G$-module. We have
\begin{itemize}
\item[\bf a.] $\Kun_G(\T_G^* V)=\Kun_G(\T_G^* V^{gen})=0$
\item[\bf b.] The index map $\indice^G_V:\Ko_G(\T_G^* V)\longrightarrow R^{-\infty}(G)$ is one to one.
\item[\bf c.] The elements $\bott(V^\ggot_\Cbb)\odot\thom_\gamma(V/V^\ggot)$ generate $\Ko_G(\T_G^* V)$, when $\gamma$ runs over the elements such that $V^\gamma=V^\ggot$.
\item[\bf d.] The elements $\bott(V^\ggot_\Cbb)\odot\sigmad^{V/V^\ggot,\varphi}$ generate $\Ko_G(\T_G^* V^{gen})$, when $\varphi$ runs over the $(G,V)$-flag.
\item[\bf e.] The image $\KK_G(\T^*_G V)$ by $\indice^G_V$ is equal to $\Fcal_G(V)$.
\item[\bf f.] The image $\KK_G(\T^*_G V^{gen})$ by $\indice^G_V$ is equal to $\dm_G(V)$.
\end{itemize}
\end{theo}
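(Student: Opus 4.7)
The plan is to prove (a)--(d) by induction on $s := \dim G - \dim H_{min}$ and then derive (b), (e), (f) from the resulting generator description together with Proposition \ref{prop.index.Thom.beta.E}. The base case $s=0$ forces $V = V^\ggot$, $\T_G^* V = \T^* V$, and ordinary Bott--Thom gives everything at once with $\Fcal_G(V) = \dm_G(V) = R(G)$. For the induction step, Theorem \ref{theo:decomposition-K_G-T_G} splits
\[
\KK_G(\T_G^* V) \;\simeq\; \bigoplus_{\hgot \in \Delta_G(V)} \KK_G(\T_G^* V_\hgot),
\]
and Remark \ref{rem:indice-H-trivial} rewrites each summand as $R(H) \otimes \KK_{H'}(\T_{H'}^* V_\hgot)$ with $G \simeq H \times H'$, $H = \exp(\hgot)$. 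Since $\hgot_{min}$ is contained in every stabilizer of the linear action (by the weight-space description for abelian actions), $\dim H' < \dim G - \dim \hgot_{min}$ whenever $\hgot \neq \hgot_{min}$, and the generic $H'$-stabilizer on $V^\hgot$ is trivial. Thus $s(H', V^\hgot) < s(G,V)$ and the induction hypothesis settles every non-generic summand, reducing the whole theorem to proving (a)--(d) for the generic stratum $V^{gen}$; (c) is then recovered from the generic-case data via Proposition \ref{prop:thom-beta-bott}, which identifies $\thom_\gamma(V/V^\ggot) \odot \bott(V^\ggot_\Cbb)$ modulo multiplication by $\wedge^\bullet \overline{V/V^\ggot}$.

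For the generic case I would fix a $(G,V)$-flag $\varphi$ and peel off one factor $V_k^\varphi$ at a time using the six-term exact sequence (\ref{exact.sequence.cotangent}). The atomic input is Proposition \ref{prop:thom-beta-linear} and Proposition \ref{prop:thom-pm}, which give
\[
0 \to \Ko_G(\T_G^*(V_k^\varphi \setminus 0)) \xrightarrow{i_!} \Ko_G(\T_{\Tbb_{\beta^\varphi_k}}^* V_k^\varphi) \xrightarrow{\br} R(G) \to 0
\]
together with the identity $i_!(\sigmad^{V_k^\varphi}) = \thom_{-\beta^\varphi_k}(V_k^\varphi) - \thom_{\beta^\varphi_k}(V_k^\varphi)$. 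The external product (Theorem \ref{theo:multiplicative-property-abelian}) combines the layers, so by a secondary induction on $s$ we get $\Kun_G(\T_G^* V^{gen}) = 0$ (proving (a) for $V^{gen}$) and obtain $\sigmad^{V/V^\ggot,\varphi} = \sigmad^{\varphi,1} \odot \cdots \odot \sigmad^{\varphi,s}$ as a generating family; tensoring with $\bott(V^\ggot_\Cbb)$ gives (d). Pushing the result through the decomposition theorem yields (a) and (c) in full.

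Parts (e), (f) and hence (b) follow from evaluating the index map on the generators: Proposition \ref{prop.index.Thom.beta.E} combined with Theorem \ref{theo.S.beta}(3) gives closed formulas for $\indice^G_V$ of each generator in terms of polarized inverses of $\wedge^\bullet \overline{V/V^\hgot}$, and a direct check shows these images land in $\Fcal_G(V)$ (respectively $\dm_G(V)$). The main obstacle I foresee is the reverse inclusion in (e): given $\Phi \in \Fcal_G(V)$, one must reconstruct a preimage in $\Ko_G(\T_G^* V)$ by isolating, stratum by stratum along the decomposition isomorphism, the component singled out by the vanishing condition $\wedge^\bullet \overline{V/V^\hgot} \otimes \Phi \in \langle R^{-\infty}(G/H) \rangle$. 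This requires careful support bookkeeping in $R^{-\infty}(G)$ and is essentially the content of de Concini--Procesi--Vergne's identification; once achieved, injectivity (b) is automatic because the resulting surjection $\Ko_G(\T_G^* V) \twoheadrightarrow \Fcal_G(V)$ has the same stratum-by-stratum structure as the free $R(G)$-module $\bigoplus_\hgot \KK_G(\T_G^* V_\hgot)$.
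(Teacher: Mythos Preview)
Your overall strategy---reduce to the generic stratum via Theorem \ref{theo:decomposition-K_G-T_G}, then handle $V^{gen}$ by flag products---is genuinely different from the paper's and closer to the original de Concini--Procesi--Vergne argument that the paper explicitly sets out to streamline. The paper never invokes Theorem \ref{theo:decomposition-K_G-T_G} in the proof of Theorem \ref{theo:generateur}. Instead it fixes a splitting $V=W\oplus\Cbb_\chi$ with $\chi$ surjective and runs every item (a)--(f) through the single exact sequence
\[
0\longrightarrow \Ko_{G_\chi}(\T_{G_\chi}^* W)\xrightarrow{\ \bj\ }\Ko_G(\T_G^* V)\xrightarrow{\ \br\ }\Ko_G(\T_G^* W)\longrightarrow 0,
\]
inducting on $\dim V/V^\ggot$ (which drops by one for both $G\circlearrowleft W$ and $G_\chi\circlearrowleft W$). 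Crucially the paper proves injectivity (b) \emph{first}, by matching $\bj,\br$ against $\indGa$ and multiplication by $\wedge^\bullet\overline{\Cbb_\chi}$ on $R^{-\infty}(G)$ (diagram (\ref{diagram-I-J}) and Lemma \ref{lem-G-a}); once (b) is in hand, (e) and (f) become five-lemma chases against the specialized exact sequences (\ref{eq:sequence-G-chi-F}) and (\ref{eq:sequence-G-chi-DM-1}). This order dissolves exactly the ``main obstacle'' you flag: the reverse inclusion $\Fcal_G(V)\subset\mathrm{Image}(\indice^G_V)$ is never established by reconstructing a preimage from support conditions.

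Your proposal has a real gap at the generic stratum. Taking external products along a fixed flag $\varphi$ via Theorem \ref{theo:multiplicative-property-abelian} produces the class $\sigmad^{V/V^\ggot,\varphi}$, but it does not compute $\KK_G(\T_G^* V^{gen})$: the product $\prod_k(V_k^\varphi\setminus\{0\})\times V^\ggot$ is a \emph{proper} open subset of $V^{gen}$ for each flag, so the one-variable exact sequences you cite do not assemble into a six-term sequence for $\T_G^* V^{gen}$, and you give no Mayer--Vietoris or other mechanism to patch the flag charts together, nor any argument for $\Kun_G(\T_G^* V^{gen})=0$. The paper handles this via Remark \ref{rem-JR}: the decomposition $V^{gen}=(V^{gen}\cap W)\sqcup\bigl(W^{gen,G_\chi}\times(\Cbb_\chi\setminus\{0\})\bigr)$ is an honest open/closed pair in $V^{gen}$ and yields the six-term sequence (\ref{sequence-JR-delta}) directly. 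Finally, your derivation of (b) from (e) is circular as written: you need injectivity to identify $\Ko_G(\T_G^* V)$ with its image before any ``stratum-by-stratum'' comparison with $\Fcal_G(V)$ can be made, yet you propose to extract injectivity from that very comparison.
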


Hence {\bf b.}, {\bf e.} and {\bf f.} say that the $R(G)$-modules $\KK_G(\T^*_G V)$ and $\KK_G(\T^*_G V^{gen})$ are respectively isomorphic to $\Fcal_G(V)$ and $\dm_G(V)$.

Note that, when $\dim V/V^\ggot=0$, we have $\T_G^* V=\T_G^* V^{gen}=\T^* V$ and all the points are direct consequences of the Bott isomorphism. Point {\bf d.} is proved in \cite{Atiyah74}, and points {\bf a.}, {\bf e.} and {\bf f.} are due to de Concini-Procesi-Vergne \cite{CPV-2010-1,CPV-2010-2}. Point {\bf b.} is proved in \cite{Atiyah74} for the circle group, and in \cite{CPV-2010-1,CPV-2010-2} for the general case. In  \cite{CPV-2010-1,CPV-2010-2}, {\bf c.} is obtained as a consequence of {\bf d.} together with the decomposition formula (\ref{eq:decomposition-K_G-T_G}).

We will give a proof by induction on $\dim V/V^\ggot$ that is based on the work of \cite{CPV-2010-1,CPV-2010-2}. But here our treatment differs from those of \cite{Atiyah74,CPV-2010-1,CPV-2010-2}, since the proof of all points of Theorem \ref{theo:generateur} follows directly by a careful analysis of the exact sequence  
$$
0\longrightarrow \Ko_{G_\chi}(\T_{G_\chi}^*W) \stackrel{\bj}{\longrightarrow}
\Ko_G(\T_G^* V)\stackrel{\br}{\longrightarrow} \Ko_G(\T_G^* W)\longrightarrow 0.
$$
associated to an invariant decomposition $V=\Cbb_\chi\oplus W$.

\subsection{Restriction to a subspace}\label{subsec:restriction}

Suppose that $V\neq V^\ggot$. Then $V$ contains a complex representation $\Cbb_\chi$ attached to a 
{\bf surjective} character $\chi: G\to S^1$. Let $G_\chi=\ker(\chi)$ with Lie algebra $\ggot_\chi$. The differential of $\chi$ is  $i\bar{\chi}$ with $\bar{\chi}\in\ggot^*$. Here 
$\Cbb_\chi\cap V^\ggot=\{0\}$ since  $\bar{\chi}\neq 0$. 

Let us consider an invariant decomposition $V=W\oplus \Cbb_\chi$. 
\begin{rem}\label{rem:induction}
We check  that $\dim W/W^\ggot=\dim V/V^\ggot -1$, and $\dim W/W^{\ggot_\chi}\leq \dim V/V^\ggot -1$.
\end{rem}
We look at the open subset 
$j:\T_G^*(W\times \Cbb_\chi\setminus\{0\})\croc \T_G^* V$. Its complement is the closed subset 
$ \T_G^* V\vert_{W\times \{0\}}\simeq \T_G^* W\times \Cbb_\chi$. We have the six term exact sequence
\begin{equation}\label{eq:six-term}
\xymatrix@C=15mm{
\Ko_G(\T_G^*(W\times \Cbb_\chi\setminus\{0\}))\ar[r]^{j_*} & \Ko_G(\T^*_G V) \ar[r]^{r} & 
\Ko_G(\T_G^* W\times \Cbb_\chi)\ar[d]_{\delta}\\
\Kun_G(\T_G^* W\times \Cbb_\chi)\ar[u]^{\delta}& \ar[l]^{r}  \Kun_G(\T^*_G V) &\ar[l]^{j_*} 
\Kun_G(\T^*_G (W\times \Cbb_\chi\setminus\{0\})).
  }
\end{equation}

Let $\br:\KK_G(\T_G^* V) \to \KK_G(\T_G^* W)$ be the composition of the map $r$ with the Bott isomorphism 
$\KK_G(\T_G^* W\times \Cbb_\chi) \to \KK_G(\T_G^* W)$. Note that $\br$ depends of the choice of the canonical complex structure 
on $\Cbb_\chi$.

The open subset $\Cbb_\chi\setminus\{0\}$ with the $G$-action is isomorphic to $G/G_\chi\times\Rbb$. Hence
$\T_G^*(W\times \Cbb_\chi\setminus\{0\})\simeq \T_G^*(W\times G/G_\chi)\times\T\Rbb$. Since the $G$-manifold 
$W\times G/G_\chi$ is isomorphic to $G\times_{G_\chi} W$, we get finally
\begin{eqnarray*}
\KK_G(\T_G^*(W\times \Cbb_\chi\setminus\{0\}))&=&\KK_G(\T_G^*(W\times G/G_\chi)\times\T\Rbb)\\
&\simeq& \KK_G(\T_G^*(W\times G/G_\chi))\\
&\simeq& \KK_G(\T_G^*(G\times_{G_\chi} W))\\
&\simeq& \KK_{G_\chi}(\T_{G_\chi}^*W).
\end{eqnarray*}

Let $\bj: \KK_{G_\chi}(\T_{G_\chi}^*W)\to \KK_G(\T_G^* V)$ be the composition of the map $j_*$ with the previous isomorphism 
$\KK_{G_\chi}(\T_{G_\chi}^* W)\simeq \KK_G(\T_G^*(W\times \Cbb_\chi\setminus\{0\}))$. The sequence (\ref{eq:six-term}) becomes 
\begin{equation}\label{sequence-JR}
\xymatrix{
\Ko_{G_\chi}(\T_{G_\chi}^*W)\ar[r]^{\bj} & \Ko_G(\T^*_G V) \ar[r]^{\br} & 
\Ko_G(\T_G^* W)\ar[d]_{\delta}\\
\Kun_{G}(\T_{G}^* W)\ar[u]^{\delta}& \ar[l]^{\br}  \Kun_G(\T^*_G V) &\ar[l]^{\bj} 
\Kun_{G_\chi}(\T^*_{G_\chi} W).
  }
\end{equation}

The following description of the morphism $\bj$ will be used in the next sections. Let $\beta\in\ggot$ such that $\ggot=\ggot_\chi\oplus\Rbb\beta$. Since the action of $G_\chi$ is trivial on $\Cbb_\chi$, the  product
$$
\Ko_{G}(\T_{G_\chi}^*W)\times \Ko_{G}(\T_{\Rbb\beta}^*\Cbb_\chi)\stackrel{\odot}{\longrightarrow} \Ko_{G}(\T_{G}^*V)
$$
is well defined. Let $\sigmad^{\Cbb_\chi}\in \Ko_{G}(\T_{\Rbb\beta}^* \Cbb_\chi)$ be the Cauchy-Riemann class.

\medskip

\begin{lem}\label{lem:J-sigma} Let $[\sigma]\in \Ko_{G_\chi}(\T_{G_\chi}^*W)$ be a class that is represented by a 
$G$-equivariant, $G_\chi$-transversally elliptic morphism $\sigma$. Then the product $\sigma\odot \sigmad^{\Cbb_\chi}$ 
is $G$-transversally elliptic and $\bj([\sigma])=\left[\sigma\odot \sigmad^{\Cbb_\chi}\right]$ in $\Ko_{G}(\T_{G}^*V)$.
\end{lem}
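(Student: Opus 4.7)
The plan is to prove the two assertions separately.

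For the first (transversal ellipticity of $\sigma\odot\sigmad^{\Cbb_\chi}$), I reduce to a support analysis. Writing $\ggot=\ggot_\chi\oplus\Rbb\beta$ and recalling from the example following Definition~\ref{def:sigmad} that $\supp(\sigmad^{\Cbb_\chi})=\{(z,\xi_\Cbb)\mid\|z\|=1,\ \xi_\Cbb\perp z\}$, a point of $\supp(\sigma)\times\supp(\sigmad^{\Cbb_\chi})$ lying in $\T_G^*V$ must satisfy the $\ggot_\chi$-transversality, forcing $(w,\xi_W)\in\supp(\sigma)\cap\T_{G_\chi}^*W$ (compact by hypothesis); writing $\xi_\Cbb=tJz$ with $t\in\Rbb$, the $\beta$-transversality then fixes $t$ uniquely in terms of $\langle\xi_W,\beta_W(w)\rangle$. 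Hence $\supp(\sigma\odot\sigmad^{\Cbb_\chi})\cap\T_G^*V$ is homeomorphic to $(\supp(\sigma)\cap\T_{G_\chi}^*W)\times S^1$, and therefore compact.

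For the identification $\bj([\sigma])=[\sigma\odot\sigmad^{\Cbb_\chi}]$, I first rewrite $\bj$ by combining the Bott step in the radial direction with the open pushforward $j_*$ built into its definition. The result is that $\bj=i_!\circ\Psi$, where
\[
\Psi\colon\KK_{G_\chi}(\T_{G_\chi}^*W)\xrightarrow{\ \sim\ }\KK_G(\T_G^*(G\times_{G_\chi}W))
\]
is the free-action isomorphism of Theorem~\ref{theo:free.action}, and $i_!$ is the push-forward attached to the closed $G$-embedding $i\colon G\times_{G_\chi}W\croc V$, $[g,w]\mapsto(w,g\cdot z_0)$, for a chosen base point $z_0\in S^1\subset\Cbb_\chi$. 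Under $\Psi$, a $G$-equivariant class $[\sigma]$ corresponds to its induced class on $G\times_{G_\chi}W$, which I denote $\sigma\odot 1$, where $1\in\KK_G(\T_G^*(G/G_\chi))\simeq R(G_\chi)$ is the canonical generator.

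The key observation is that $\sigmad^{\Cbb_\chi}$ itself equals $i^0_!(1)$, where $i^0\colon S^1\croc\Cbb_\chi$ is the inclusion. Indeed, when $E=\Cbb_\chi$ the horizontal bundle $\Hcal_y=(\Cbb y)^\perp$ vanishes, so the Cauchy--Riemann morphism of Definition~\ref{def:sigmad} degenerates to the zero map $\cu\to 0$ on $S^1$, whose class in $\KK_G(\T_G^*S^1)=R(G_\chi)$ is $[\cu]-[0]=1$. Combining this with the projection formula (naturality of external product under push-forward along embeddings) yields
\[
\sigma\odot\sigmad^{\Cbb_\chi}\;=\;\sigma\odot i^0_!(1)\;=\;i_!(\sigma\odot 1)\;=\;\bj([\sigma]).
\]
The main obstacle is justifying that the degenerate Cauchy--Riemann symbol on $S^1$ indeed represents the unit class in $R(G_\chi)$, and verifying the compatibility of the external product with the Bott and free-action isomorphisms entering the factorization $\bj=i_!\circ\Psi$; both points are standard but require careful bookkeeping of conventions.
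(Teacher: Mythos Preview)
Your proposal is correct and follows essentially the same route as the paper: both factor $\bj$ as a push-forward $i_!$ composed with the free-action isomorphism, identify the image of a $G$-equivariant $\sigma$ under that isomorphism with $\sigma\odot[0]$ (your $\sigma\odot 1$) on $G/G_\chi\times W$, and recognize $i_!([0])=\sigmad^{\Cbb_\chi}$. One small correction: your embedding $i:[g,w]\mapsto(w,g\cdot z_0)$ is not well-defined on $G\times_{G_\chi}W$ (replace $w$ by $g\cdot w$); the paper handles exactly this point by introducing the two $G\times G_\chi$-actions on $G\times W$ and the twisting diffeomorphism $\theta(x,w)=(x,x^{-1}\cdot w)$, which is what makes the identification $\Psi([\sigma])=\sigma\odot[0]$ transparent once $\sigma$ is $G$-equivariant.
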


\medskip

\begin{proof} The character $\chi$ defines the inclusion $i: G/G_\chi \to \Cbb_\chi, g\mapsto \chi(g)$. Let 
$i_!: \Ko_G(\T_G^*(G/G_\chi\times W))\longrightarrow\Ko_G(\T_G^*V)$ be the push-forward morphism.

The manifold $G\times W$ is equipped with two $G\times G_\chi$-actions: $(g,h)\cdot_1 (x,w):= (gxh^{-1}, h\cdot w)$ and 
 $(g,h)\cdot_2 (x,w):= (gxh^{-1}, g\cdot w)$. The map $\theta(x,w)=(x,x^{-1}\cdot w)$ is an isomorphism between 
 $G\times_2 W$ and $G\times_1 W$. The quotients by $G$ and $G_\chi$ give us the maps 
$\pi_G: G\times_1 W\to W$, and $\pi_{G_\chi}:G\times_2 W\to G/G_\chi\times W$. 

We have 
\begin{equation}\label{eq:map-J}
\bj=  i_!\circ(\pi_{G_\chi}^*)^{-1}\circ \theta^*\circ\pi_G^*
\end{equation} 
where $(\pi_{G_\chi}^*)^{-1}\circ\, \theta^*\circ\pi_G^*:\Ko_{G_\chi}(\T_{G_\chi}^*W)\longrightarrow\Ko_G(\T_G^*(G/G_\chi\times W))$ is an isomorphism.

 It is an easy matter to check that if the class 
$[\sigma]\in \Ko_{G_\chi}(\T_{G_\chi}^*W)$ is represented by a  {\bf $\mathbf{G}$-equivariant}, $G_\chi$-transversally elliptic morphism $\sigma$, then $(\pi_{G_\chi}^*)^{-1}\circ \theta^*\circ\pi_G^*(\sigma)=\sigma\odot [0]$ where 
$[0]:\Cbb\to\{0\}$ is the zero symbol on $G/G_\chi$. Finally $\bj(\sigma)= i_!(\sigma\odot [0])=\sigma\odot \sigmad^{\Cbb_\chi}$.
\end{proof}

\medskip

\begin{rem}\label{rem-JR}
In the next sections, we will use the exact sequence (\ref{sequence-JR}), when $V$ is replaced by an invariant 
open subset $\Ucal_V$. Suppose that there exist invariant open subsets $\Ucal^1_W,\Ucal^2_W\subset W$ such that 
$\Ucal_V=\Ucal^1_W\bigsqcup \Ucal^2_W\times \Cbb_\chi\setminus\{0\}$. Then (\ref{sequence-JR}) becomes 
\begin{equation}\label{sequence-JR-bis}
\xymatrix{
\Ko_{G_\chi}(\T_{G_\chi}^*\Ucal^2_W)\ar[r]^{\bj} & \Ko_G(\T^*_G \Ucal_V) \ar[r]^{\br} & 
\Ko_G(\T_G^* \Ucal_W^1)\ar[d]_{\delta}\\
\Kun_{G}(\T_{G}^* \Ucal_W^1)\ar[u]^{\delta}& \ar[l]^{\br}  \Kun_G(\T^*_G \Ucal_V) &\ar[l]^{\bj} 
\Kun_{G_\chi}(\T^*_{G_\chi} \Ucal^2_W).
  }
\end{equation}
For example, if  $\Ucal_V= V^{gen}$, we take $\Ucal^1_W=W\cap V^{gen}$ and $\Ucal^2_W=W^{gen, G_\chi}$.
\end{rem}

\subsection{The index map is injective}\label{subsec:proof-injective}

Let us prove by induction on $n\geq 0$ the following fact
$$
(\mathrm{H}_n)\qquad \indice^G_V:\Ko_G(\T_G^* V)\longrightarrow R^{-\infty}(G)\ \ \mathrm{is\ one\ to\ one\ if}\ \ \dim V/V^\ggot\leq n.
$$

If $\dim V/V^\ggot=0$, we have $\T_G^* V=\T^* V$ and the index map $\Ko_G(\T^* V)\to R(G)$ is 
the inverse of the Bott isomorphism. 

Suppose now that $(\mathrm{H}_n)$ is true, and consider $G\circlearrowleft V$ such that $\dim V/V^\ggot= n+1$.  
We start with a decomposition $V=W\oplus \Cbb_\chi$ and the exact sequence (\ref{sequence-JR}). The induction map 
$\indGa: R^{-\infty}(G_\chi)\to R^{-\infty}(G)$ is defined by the relation $\indGa(E)=\left[L^2(G)\otimes 
E\right]^{G_\chi}$. We denote $\wedge^\bullet \overline{\Cbb_{\chi}}: R^{-\infty}(G)\to R^{-\infty}(G)$ 
the product by $1-\overline{\Cbb_{\chi}}$.

\begin{prop}
The following diagram is commutative
\begin{equation}\label{diagram-I-J}
\xymatrix@C=10mm{ 
\Ko_{G_\chi}(\T_{G_\chi}^*W) \ar[d]^{\indice^{G_\chi}_W}\ar[r]^{\bj}&
\Ko_G(\T_G^* V)\ar[d]^{\indice^{G}_V}\ar[r]^{\br}&
\Ko_G(\T_G^* W)\ar[d]^{\indice^{G}_W}\\
R^{-\infty}(G_\chi)\ar[r]^{\indGa} &
R^{-\infty}(G)\ar[r]^{\wedge^\bullet \overline{\Cbb_{\chi}}} &
R^{-\infty}(G).
}
\end{equation}
\end{prop}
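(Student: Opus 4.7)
The diagram splits into two squares, and I would prove each separately.

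For the right-hand square, the crucial observation is that $V = W \oplus \Cbb_\chi$ is a trivial $G$-equivariant complex line bundle over $W$, and the map $\br$ defined just before the proposition coincides with the restriction morphism of Proposition \ref{prop:restriction} attached to this bundle. The third item of that proposition therefore gives, for any $\sigma \in \Ko_G(\T^*_G V)$,
\[
\indice^G_W(\br(\sigma)) \;=\; \indice^G_V\bigl(\sigma \otimes \wedge^\bullet \pi^*\overline{\Cbb_\chi}\bigr) \;=\; (1-\overline{\Cbb_\chi})\cdot \indice^G_V(\sigma),
\]
where the second equality uses $R(G)$-linearity of the index and the fact that $\pi^*\overline{\Cbb_\chi}$ is trivial. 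This is exactly the commutativity of the right square.

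For the left square, take $[\sigma]\in\Ko_{G_\chi}(\T^*_{G_\chi}W)$ and choose a $G$-equivariant representative $\sigma$ (possible by averaging over the compact group $G/G_\chi$, using that $G$ is abelian). Lemma \ref{lem:J-sigma} then gives $\bj([\sigma]) = [\sigma\odot\sigmad^{\Cbb_\chi}]$. Fix $\beta\in\ggot$ with $\ggot=\ggot_\chi\oplus\Rbb\beta$; since $\ggot_\chi$ acts trivially on $\Cbb_\chi$, the abelian multiplicative property (Theorem \ref{theo:multiplicative-property-abelian}) applied with $\agot_1=\ggot_\chi$, $\agot_2=\Rbb\beta$, $M_1=W$, $M_2=\Cbb_\chi$ yields
\[
\indice^G_V\bigl(\bj([\sigma])\bigr) \;=\; \indice^G_W(\sigma)\cdot \indice^G_{\Cbb_\chi}(\sigmad^{\Cbb_\chi})
\]
in $R^{-\infty}(G)$. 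Proposition \ref{prop:thom-pm} together with the explicit circle example of Section \ref{sec:pushed-linear} identifies $\indice^G_{\Cbb_\chi}(\sigmad^{\Cbb_\chi}) = \sum_{n\in\Zbb}\Cbb_\chi^n$, which by direct inspection equals $\indGa(1_{G_\chi}) = L^2(G/G_\chi)$.

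To close the diagram, I would verify the projection-type identity
\[
\indGa(\Phi|_{G_\chi}) \;=\; \Phi \cdot \indGa(1_{G_\chi}) \qquad \text{for } \Phi\in R^{-\infty}(G) \text{ smooth along } G/G_\chi,
\]
which is immediate by checking on characters $\mu\in\widehat{G}$: both sides sum $\mu$ over the $G/G_\chi$-coset of $\mu$. Since $\sigma$ is $G_\chi$-transversally elliptic, the $G$-index $\Phi = \indice^G_W(\sigma)$ is smooth along $G/G_\chi$, and its restriction to $G_\chi$ equals $\indice^{G_\chi}_W(\sigma)$ (Section \ref{sec:index}). Applying the projection formula then gives $\indice^G_V(\bj([\sigma])) = \indGa(\indice^{G_\chi}_W(\sigma))$, which is exactly the commutativity of the left square. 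The main obstacle I anticipate is ensuring that the product of generalized characters produced by the multiplicative formula actually converges in $R^{-\infty}(G)$; this is precisely what the hypotheses $\agot_1\cap\agot_2=\{0\}$ and $\agot_1$-triviality on $M_2$ are designed to guarantee, via the properness argument recalled just before Theorem \ref{theo:multiplicative-property-abelian}, after which the rest is a routine bookkeeping with characters of $G$.
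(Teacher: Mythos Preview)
Your treatment of the right square is correct and matches the paper: both simply invoke Proposition \ref{prop:restriction}. The left square is where your argument diverges, and there is a gap. You claim that any class in $\Ko_{G_\chi}(\T^*_{G_\chi} W)$ admits a $G$-equivariant representative ``by averaging over $G/G_\chi$''. Averaging can make a morphism between already $G$-equivariant bundles $G$-equivariant, but it cannot promote $G_\chi$-equivariant bundles $E^\pm$ to $G$-equivariant ones; nothing in the setup guarantees that the forgetful map $\Ko_G(\T^*_{G_\chi} W)\to\Ko_{G_\chi}(\T^*_{G_\chi} W)$ is surjective. Lemma \ref{lem:J-sigma} is explicitly conditional on the existence of such a representative, so invoking it for an arbitrary class is not justified. (Once a $G$-equivariant representative is granted, the rest of your computation --- the abelian multiplicative property, the identification $\indice^G_{\Cbb_\chi}(\sigmad^{\Cbb_\chi})=\indGa(1)$, and the projection formula --- is correct.)

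The paper sidesteps the issue by not using Lemma \ref{lem:J-sigma} at all. It returns to the intrinsic description \eqref{eq:map-J} of $\bj$ as $i_!\circ(\pi_{G_\chi}^*)^{-1}\circ\theta^*\circ\pi_G^*$, valid for every class. The pullback $\pi_G^*\sigma$ on $G\times W$ is automatically $G\times G_\chi$-equivariant, the $G$-structure coming from the free $G$-action on the $G$ factor rather than from any assumed structure on $\sigma$. Then Theorem \ref{theo:multiplicative-property} with $M_1=G$, $M_2=W$ gives $\indice^{G\times G_\chi}_{G\times W}(\pi_G^*\sigma)=L^2(G)\otimes\indice^{G_\chi}_W(\sigma)$, and taking $G_\chi$-invariants via the free action property (Theorem \ref{theo:free.action}) yields $\indGa(\indice^{G_\chi}_W(\sigma))$ directly, with no need to lift $\sigma$ to $G$.
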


\begin{proof} Let $\sigma\in \Ko_{G_\chi}(\T_{G_\chi}^*W)$. We have $\pi_{G}^*(\sigma)=\sigma\odot [0]$ where 
$[0]:\Cbb\to\{0\}$ is the zero symbol on $G$. Then the product formula says that 
$\indice^{G\times G_\chi}_{G\times W}(\pi_{G}^*(\sigma))= \indice^{G_\chi}_W(\sigma)\otimes L^2(G)$ and  thanks to (\ref{eq:map-J}), we see that
\begin{eqnarray*}
\indice^G_V(\bj(\sigma))&=&\indice^{G}_{G/G_\chi\times W}((\pi_{G_\chi}^*)^{-1}\circ \theta^*\circ\pi_G^*(\sigma))\\
&=&\left[\indice^{G\times G_\chi}_{G\times W}(\pi_{G}^*(\sigma))\right]^{G_\chi}\\
&=&\indGa\left(\indice^{G_\chi}_W(\sigma)\right).
\end{eqnarray*}

This proved the commutativity of the left part of the diagram, and the commutativity of the right part of the diagram is a particular case of Proposition \ref{prop:restriction}.
\end{proof}

\medskip 

We need now the following result that will be proved in Appendix A
\begin{lem}\label{lem-G-a}
The sequence 
\begin{equation}\label{eq:sequence-G-chi}
0\longrightarrow R^{-\infty}(G_\chi)\stackrel{\indGa}{\longrightarrow}
R^{-\infty}(G)\stackrel{\wedge^\bullet \overline{\Cbb_{\chi}}}{\longrightarrow}
R^{-\infty}(G)
\end{equation}
is exact.
\end{lem}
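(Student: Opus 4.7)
The plan is to prove Lemma \ref{lem-G-a} by making everything completely explicit at the level of Fourier coefficients, after which the exactness becomes a tautology. The abelianness of $G$ together with the surjectivity of $\chi$ makes this possible.

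First, I would unwind the induction map. Since $G$ is abelian, every irreducible representation of $G_\chi$ is a character $\Cbb_\eta$ with $\eta \in \widehat{G_\chi}$, and Frobenius reciprocity (or a direct computation of $[L^2(G)\otimes \Cbb_\eta]^{G_\chi}$ using the decomposition $L^2(G)=\widehat{\bigoplus}_\mu \Cbb_\mu$) gives
$$
\indGa(\Cbb_\eta)=\sum_{\mu\in\widehat{G},\ \mu|_{G_\chi}=\eta}\Cbb_\mu.
$$
Because $\chi:G\to S^1$ is surjective, the restriction homomorphism $\pi:\widehat{G}\to\widehat{G_\chi}$ is surjective with kernel $\Zbb\chi$; hence for each $\eta$ the set $\pi^{-1}(\eta)$ is a single $\Zbb\chi$-coset. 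In particular, an arbitrary element $\Psi=\sum_{\eta} b_\eta \Cbb_\eta\in R^{-\infty}(G_\chi)$ is sent to
$$
\indGa(\Psi)=\sum_{\mu\in\widehat{G}} b_{\pi(\mu)}\, \Cbb_\mu,
$$
which is a well-defined element of $R^{-\infty}(G)$ since each coefficient equals $b_{\pi(\mu)}$.

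Next I would describe the second map. If $\Phi=\sum_\mu a_\mu\Cbb_\mu$, then
$$
\wedge^\bullet\overline{\Cbb_\chi}\otimes\Phi=(1-\overline{\Cbb_\chi})\otimes\Phi
=\sum_\mu\bigl(a_\mu-a_{\mu+\chi}\bigr)\Cbb_\mu.
$$
Thus $\Phi\in\ker\bigl(\wedge^\bullet\overline{\Cbb_\chi}\bigr)$ if and only if the coefficient sequence $\mu\mapsto a_\mu$ is constant along every $\Zbb\chi$-coset, i.e.\ if and only if $a_\mu$ depends only on $\pi(\mu)\in\widehat{G_\chi}$.

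From these two computations the exactness is immediate. If $\Phi=\indGa(\Psi)$ with $\Psi=\sum b_\eta\Cbb_\eta$, then $a_\mu=b_{\pi(\mu)}$ is manifestly $\Zbb\chi$-invariant, so the composition vanishes. Conversely, if $\Phi\in\ker(\wedge^\bullet\overline{\Cbb_\chi})$, then setting $b_\eta:=a_\mu$ for any $\mu\in\pi^{-1}(\eta)$ produces an element $\Psi\in R^{-\infty}(G_\chi)$ with $\indGa(\Psi)=\Phi$. Injectivity of $\indGa$ is the same observation run in reverse: if $\indGa(\Psi)=0$ then $b_{\pi(\mu)}=0$ for every $\mu$, and since $\pi$ is surjective this forces $b_\eta=0$ for every $\eta$. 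There is no real ``hard part'' here; the only thing one must be careful about is that the infinite sums make sense, which is why one uses that $\pi$ has each fibre equal to a single coset of $\Zbb\chi$, ensuring the coefficient of each $\Cbb_\mu$ in $\indGa(\Psi)$ is a single $b_\eta$ rather than an infinite sum.
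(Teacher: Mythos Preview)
Your argument is correct and follows essentially the same route as the paper's Appendix A: both identify $\ker(\wedge^\bullet\overline{\Cbb_\chi})$ with the set of $\Phi=\sum a_\mu\Cbb_\mu$ whose coefficients are constant along $\Zbb\chi$-cosets, and then observe this is exactly the image of $\indGa$. The only difference is that the paper supplies a self-contained proof (its Lemma~6.1) that the restriction $\pi:\widehat{G}\to\widehat{G_\chi}$ is surjective, whereas you assert it as a consequence of the surjectivity of $\chi$; strictly speaking the surjectivity of $\chi$ gives $\ker\pi=\Zbb\chi$, while the surjectivity of $\pi$ is the standard character-extension fact for closed subgroups of compact abelian groups (equivalently, exactness of Pontryagin duality), which you are implicitly invoking.
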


Lemma \ref{lem-G-a} tells us in particular that $\indGa$ is one to one. We can now finish the proof of the induction. In the commutative diagram (\ref{diagram-I-J}), the maps $\indice^G_W,\indice^{G_\chi}_W$ and $\indGa$ are one to one. It is an easy matter to deduces that $\indice^G_V$ is one to one.

\bigskip 

We end up this section with the following statement which is the direct consequence of the injectivity of $\indice^G_V$ (see Remark \ref{rem:thom-J-0-1}).

\begin{rem}\label{rem:thom-J-K}
Let $J_k, k=0,1$ be two invariants complex structures on $V$, and let $\thom_{\beta}(V, J_k)$ be the corresponding pushed symbols attached to an element $\beta$ satisfying $V^\beta=\{0\}$. There exists an invertible element $\Phi\in R(G)$ such that 
$$
\thom_{\beta}(V,J_0))=\Phi\cdot\thom_{\beta}(V,J_1)
$$
in $\Ko_G(\T_G^* V)$.
\end{rem}

\subsection{Generators of $\KK_G(\T^*_G V)$}\label{subsec:proof-generators-V}

Let $V$ be a real $G$-module : we equip $V/V^\ggot$ with an invariant complex structure. 
Let $A_G(V)\subset\Ko_G(\T_G^* V)$ be the submodule generated by the family $\bott(V^\ggot_\Cbb)\odot\thom_\gamma(V/V^\ggot)$, where $\gamma$ runs over the element of $\ggot$ satisfying $V^\gamma=V^\ggot$. Remark \ref{rem:thom-J-K} tells us that $A_G(V)$ is independent of the choice of the complex structure on $V/V^\ggot$.

 In this section we will prove by induction on $n\geq 0$ the following fact
$$
(\mathrm{H}_n)\qquad \Kun_G(\T_G^* V)=0 \quad \mathrm{and}\quad \Ko_G(\T_G^* V)=A_G(V)\ \ \mathrm{if}\ \ \dim V/V^\ggot\leq n.
$$

If $\dim V/V^\ggot=0$, we have $\T_G^* V=\T^* V$ and assertion $(\mathrm{H}_0)$ is a  direct consequence of the Bott isomorphism. 

Suppose now that $(\mathrm{H}_n)$ and is true, and consider $G\circlearrowleft V$ 
such that $\dim V/V^\ggot= n+1$. We have a decomposition $V=W\oplus \Cbb_\chi$ with $\bar{\chi}\neq 0$.  
If we apply\footnote{See Remark \ref{rem:induction}.}  $(\mathrm{H}_n)$ to  $G\circlearrowleft W$ and 
$G_\chi\circlearrowleft W$, we get first that $\Kun_G(\T_G^* W)=0$ and 
$\Kun_{G_\chi}(\T_{G_\chi}^* W)=0$. 

The long exact sequence (\ref{sequence-JR}) implies then that $\Kun_G(\T^*_G V)=0$, and 
induces the short exact sequence 
\begin{equation}\label{eq:sequence-JR-short}
0\longrightarrow \Ko_{G_\chi}(\T_{G_\chi}^*W) \stackrel{\bj}{\longrightarrow}
\Ko_G(\T_G^* V)\stackrel{\br}{\longrightarrow} \Ko_G(\T_G^* W)\longrightarrow 0.
\end{equation}

The assertion $(\mathrm{H}_n)$ gives also  $\Ko_G(\T_G^* W)=A_G(W)$ and $\Ko_{G_\chi}(\T_{G_\chi}^* W)=A_{G_\chi}(W)$. With the help of (\ref{eq:sequence-JR-short}), the equality $\Ko_G(\T_G^* V)=A_G(V)$ will follows from following Lemma.

\begin{lem}\label{lem:A-G-V} 
We have 
\begin{itemize}
\item $\bj\left( A_{G_\chi}(W)\right)\subset A_G(V)$, 
\item $A_G(W)\subset \br(A_G(V))$.
\end{itemize} 
\end{lem}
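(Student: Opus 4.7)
The plan is to compute $\bj$ and $\br$ directly on generators and identify the results as elements of $A_G(V)$ and $A_G(W)$ respectively, using four tools from earlier in the paper: Lemma~\ref{lem:J-sigma} (which represents $\bj$ as exterior product with $\sigmad^{\Cbb_\chi}$), Proposition~\ref{prop:thom-pm} (which writes $i_!(\sigmad^{\Cbb_\chi}) = \thom_{-\beta}(\Cbb_\chi)-\thom_\beta(\Cbb_\chi)$), Proposition~\ref{prop:thom-beta-bott} (converting $\bott \odot \thom_{\pm\beta}$ into a single pushed Thom symbol, modulo a $\wedge^\bullet \overline{E}$ factor in $R(G)$), Lemma~\ref{lem:produit-thom-beta} (multiplicativity of pushed Thom symbols across an orthogonal splitting when one summand has trivial action of one element), and Proposition~\ref{prop:R-thom-beta} (compatibility of $\br$ with pushed Thom symbols). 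Throughout, fix $\beta \in \ggot$ with $\ggot = \ggot_\chi \oplus \Rbb\beta$ and $\bar{\chi}(\beta)>0$, so the natural complex structure on $\Cbb_\chi$ agrees with $J_\beta$. Also note $V^\ggot = W^\ggot$ since $\Cbb_\chi$ has no $G$-fixed vector.

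For the first inclusion, start from a generator $\sigma = \bott(W^{\ggot_\chi}_\Cbb) \odot \thom_{\gamma'}(W/W^{\ggot_\chi})$ of $A_{G_\chi}(W)$, with $\gamma' \in \ggot_\chi$ and $W^{\gamma'}=W^{\ggot_\chi}$. Since $G$ is abelian, $\sigma$ is $G$-equivariant and Lemma~\ref{lem:J-sigma} combined with Proposition~\ref{prop:thom-pm} gives $\bj(\sigma) = \sigma \odot (\thom_{-\beta}(\Cbb_\chi) - \thom_\beta(\Cbb_\chi))$. Split $W^{\ggot_\chi} = W^\ggot \oplus (W^{\ggot_\chi}/W^\ggot)$: on $W^{\ggot_\chi}/W^\ggot$ the group $G_\chi$ acts trivially while every weight under $G/G_\chi\simeq S^1$ is a non-zero multiple of $\chi$, so $\pm\beta$ acts bijectively. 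Apply Proposition~\ref{prop:thom-beta-bott} to $V_0 := (W^{\ggot_\chi}/W^\ggot) \oplus \Cbb_\chi$ to obtain
\[
\bott\bigl((W^{\ggot_\chi}/W^\ggot)_\Cbb\bigr) \odot \thom_{\pm\beta}(\Cbb_\chi) = \thom_{\pm\beta}(V_0) \otimes \wedge^\bullet \overline{W^{\ggot_\chi}/W^\ggot}.
\]
Since $\gamma'\in\ggot_\chi$ acts trivially on $V_0$ (both on $W^{\ggot_\chi}/W^\ggot$ and on $\Cbb_\chi$) and acts bijectively on $W/W^{\ggot_\chi}$, Lemma~\ref{lem:produit-thom-beta} applied with $\beta_1 = \pm\beta$, $\beta_2 = \gamma'$ and the decomposition $V/V^\ggot = V_0 \oplus W/W^{\ggot_\chi}$ gives, for $t>0$ small, $\thom_{\pm\beta}(V_0) \odot \thom_{\gamma'}(W/W^{\ggot_\chi}) = \thom_{\gamma' \pm t\beta}(V/V^\ggot)$. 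Putting these together,
\[
\bj(\sigma) = \bott(V^\ggot_\Cbb) \odot \bigl(\thom_{\gamma'-t\beta}(V/V^\ggot) - \thom_{\gamma'+t\beta}(V/V^\ggot)\bigr) \otimes \wedge^\bullet \overline{W^{\ggot_\chi}/W^\ggot}.
\]
For small $t$ one has $(V/V^\ggot)^{\gamma'\pm t\beta}=\{0\}$, so each pushed Thom symbol paired with $\bott(V^\ggot_\Cbb)$ is a generator of $A_G(V)$; since $\wedge^\bullet \overline{W^{\ggot_\chi}/W^\ggot}\in R(G)$ and $A_G(V)$ is an $R(G)$-module, $\bj(\sigma)\in A_G(V)$.

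For the second inclusion, fix a generator $\tau = \bott(W^\ggot_\Cbb) \odot \thom_{\gamma_W}(W/W^\ggot)$ of $A_G(W)$ with $W^{\gamma_W}=W^\ggot$, and set $\gamma = \gamma_W + s\beta$ for $s\neq 0$ sufficiently small that $(W/W^\ggot)^\gamma=\{0\}$ (an open condition); the sign and size of $s$ can further be chosen so that $\bar{\chi}(\gamma)=\bar{\chi}(\gamma_W)+s\bar{\chi}(\beta)\neq 0$, which together give $V^\gamma = V^\ggot$. Then $\bott(V^\ggot_\Cbb) \odot \thom_\gamma(V/V^\ggot)$ is a generator of $A_G(V)$, and Proposition~\ref{prop:R-thom-beta} applied to the decomposition $V/V^\ggot = (W/W^\ggot) \oplus \Cbb_\chi$ together with $R(G)$-linearity of $\br$ yields $\br(\bott(V^\ggot_\Cbb) \odot \thom_\gamma(V/V^\ggot)) = \bott(W^\ggot_\Cbb) \odot \thom_\gamma(W/W^\ggot)$. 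Finally, the path $s\mapsto \gamma_W+s\beta$ stays inside the open set $\{\eta\in\ggot : (W/W^\ggot)^\eta=\{0\}\}$ for small $s$, so the symbols $\Clif(\xi-\widetilde{\eta}(x))$ form a homotopy of $G$-transversally elliptic morphisms on $W/W^\ggot$ and $\thom_\gamma(W/W^\ggot)=\thom_{\gamma_W}(W/W^\ggot)$, giving $\br(\bott(V^\ggot_\Cbb) \odot \thom_\gamma(V/V^\ggot)) = \tau$. The main obstacle is the bookkeeping in the first step: one has to see that after multiplying $\sigma$ by $\thom_{\pm\beta}(\Cbb_\chi)$, the three $\thom$-factors and the $\bott$-factor on $W^{\ggot_\chi}/W^\ggot$ recombine into a single pushed Thom symbol on $V/V^\ggot$ modulo a factor in $R(G)$, which is exactly what Proposition~\ref{prop:thom-beta-bott} and Lemma~\ref{lem:produit-thom-beta} are designed to achieve.
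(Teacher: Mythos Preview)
Your proof is correct and follows essentially the same route as the paper: compute $\bj$ on generators via Lemma~\ref{lem:J-sigma} and Proposition~\ref{prop:thom-pm}, then recombine the pieces into a single pushed Thom symbol on $V/V^\ggot$ using Proposition~\ref{prop:thom-beta-bott} and Lemma~\ref{lem:produit-thom-beta} (the paper applies these two in the opposite order, but either works), and handle $\br$ via Proposition~\ref{prop:R-thom-beta}. The only point you gloss over that the paper makes explicit is the passage from generators to the full $R(G_\chi)$-module $A_{G_\chi}(W)$, which uses the surjectivity of $R(G)\to R(G_\chi)$ together with the $R(G)$-linearity of $\bj$; conversely, your perturbation $\gamma=\gamma_W+s\beta$ in the second part is more careful than the paper, which tacitly assumes $V^\gamma=V^\ggot$.
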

\begin{proof} We equip $V/V^\ggot=W/W^\ggot\oplus \Cbb_\chi$ with the complex structure $J:=J_\beta$ 
where $\langle\bar{\chi},\beta\rangle >0$. We will use the decomposition of complex $G$-vector spaces 
$W/W^\ggot\simeq W/W^{\ggot_\chi}\oplus W^{\ggot_\chi}/W^{\ggot}$, and the fact that $V^\ggot=W^\ggot$.

Let $\alpha:=\bott(W^{\ggot_\chi}_\Cbb)\odot\thom_\gamma(W/W^{\ggot_\chi})$ be a generator of $A_{G_\chi}(W)$. It is a $G$-equivariant symbol, hence Lemma \ref{lem:J-sigma} applies: its image by $\bj$ is equal to 
$\bj(\alpha)=\bott(W^{\ggot_\chi}_\Cbb)\odot\thom_\gamma(W/W^{\ggot_\chi})\odot \sigmad^{\Cbb_\chi}$. If we use the fact that 
$\sigmad^{\Cbb_\chi}=\thom_{- \beta}(\Cbb_\chi)-\thom_{\beta}(\Cbb_\chi)$, we see that $\bj(\alpha)=U_{-} - U_{+}$ where 
\begin{eqnarray*}
U_\pm&=&\bott(W^{\ggot_\chi}_\Cbb)\odot\thom_\gamma(W/W^{\ggot_\chi})\odot \thom_{\pm \beta}(\Cbb_\chi)\\
&=&\bott(W^{\ggot_\chi}_\Cbb)\odot\thom_{\gamma_{\pm}}(W/W^{\ggot_\chi}\oplus\Cbb_\chi)\qquad\qquad [1]\\
&=&\bott(V^{\ggot}_\Cbb)\odot\bott((W^{\ggot_\chi}/W^{\ggot})_\Cbb)\odot\thom_{\gamma_{\pm}}(W/W^{\ggot_\chi}\oplus\Cbb_\chi)\qquad [2]\\
&=&\wedge^\bullet\overline{W^{\ggot_\chi}/W^{\ggot}}\otimes\bott(V^{\ggot}_\Cbb)\odot\thom_{\gamma_{\pm}}(V/V^\ggot)\qquad \qquad [3].
\end{eqnarray*}
In $[1]$, the term $\gamma_\pm$ is equal to $\gamma \pm t\beta$ with $0<<t<<1$ (see Lemma \ref{lem:produit-thom-beta}). 
In $[2]$, we use that $W^{\ggot_\chi}\simeq V^\ggot\oplus W^{\ggot_\chi}/W^{\ggot}$. In $[3]$ we use that $V/V^\ggot= W/W^{\ggot_\chi}\oplus W^{\ggot_\chi}/W^{\ggot}\oplus \Cbb_\chi$ (see 
Proposition \ref{prop:thom-beta-bott}).

We have proved that $\bj(\alpha)$ belongs to $A_G(V)$ for any generator $\alpha$ of $A_{G_\chi}(W)$. We get the first point, since the restriction $R(G)\to R(G_\chi)$ is surjective.

Let $\alpha':=\bott(W^{\ggot}_\Cbb)\odot\thom_\gamma(W/W^{\ggot})$ be a generator of $A_{G}(W)$. Thanks to  
Proposition \ref{prop:R-thom-beta}, we see that $\alpha'=\br(\alpha'')$ with $\alpha''=\bott(V^{\ggot}_\Cbb)\odot\thom_\gamma(V/V^{\ggot})$. The second point is then proved.
\end{proof}

\subsection{Generators of $\KK_G(\T_G^* V^{gen})$}\label{subsec:proof-generators-V-gen}

Let $B_G(V)$ be the submodule of $\Ko_G(\T_G^* V^{gen})$ generated by the family 
$\bott(V^\ggot_\Cbb)\odot\sigmad^{V/V^\ggot,\varphi}$ where $\varphi$ runs over the $(G,V)$-flag.

In this section we will prove by induction on $n\geq 0$ the following fact

$$
(\mathrm{H}'_n) \qquad \Kun_G(\T_G^* V^{gen})=0 \quad \mathrm{and} \quad \Ko_G(\T_G^* V^{gen})=B_G(V) \ \ \mathrm{if}\ \  \dim V/V^\ggot\leq n.
$$

\medskip

If $\dim V/V^\ggot=0$, we have $\T_G^* V^{gen}=\T^* V$ and $(\mathrm{H}'_0)$ is a direct consequence 
of the Bott isomorphism. Suppose now that $(\mathrm{H}_n)$ is true, and consider $G\circlearrowleft V$ 
such that $\dim V/V^\ggot= n+1$. We have an invariant decomposition $V=W\oplus\Cbb_\chi$, with 
$\bar{\chi}\neq 0$, and 
$$
V^{gen}= V^{gen}\cap W\bigsqcup W^{gen, G_\chi}\times \Cbb_\chi\setminus\{0\}.
$$
Note that $V^{gen}\cap W$ is either equal to $W^{gen}$ (if the $G$-orbits in $V$ and $W$ have 
the same maximal dimension) or is empty. Following Remark \ref{rem-JR}, we have the exact sequence
\begin{equation}\label{sequence-JR-delta}
\xymatrix{
\Ko_{G_\chi}(\T_{G_\chi}^*W^{gen, G_\chi})\ar[r]^{\bj} & \Ko_G(\T^*_G V^{gen}) \ar[r]^{\br} & 
\Ko_G(\T_G^* W^{gen})\ar[d]_{\delta}\\
\Kun_{G}(\T_{G}^* W^{gen})\ar[u]^{\delta}& \ar[l]^{\br}  \Kun_G(\T^*_G V^{gen}) &\ar[l]^{\bj} 
\Kun_{G_\chi}(\T^*_{G_\chi} W^{gen, G_\chi})
  }
\end{equation}
when $V^{gen}\cap W\neq\emptyset$. On the other hand, when $V^{gen}\cap W= \emptyset$, we have an isomorphism
\begin{equation}\label{eq:J-iso}
\bj:\KK_{G_\chi}(\T_{G_\chi}^*W^{gen,G_\chi})\longrightarrow \KK_{G}(\T_{G}^*V^{gen}).
\end{equation}

If we apply $(\mathrm{H}'_n)$ to  $G\circlearrowleft W$ and 
$G_\chi\circlearrowleft W$, we get first $\Kun_G(\T_G^* W^{gen})=0$ and 
$\Kun_{G_\chi}(\T_{G_\chi}^* W^{gen,G_\chi})=0$. Using the bottom of the diagram (\ref{sequence-JR-delta}) and the 
isomorphism (\ref{eq:J-iso}), we get $\Kun_G(\T^*_G V)=0$. Moreover, the long exact sequence  (\ref{sequence-JR-delta})
induces the short exact sequence 
$$
0\longrightarrow \Ko_{G_\chi}(\T_{G_\chi}^*W^{gen,G_\chi}) \stackrel{\bj}{\longrightarrow}
\Ko_G(\T_G^* V^{gen})\stackrel{\br}{\longrightarrow} \Ko_G(\T_G^* (V^{gen}\cap W))\longrightarrow 0.
$$

Since the assertion $(\mathrm{H}'_n)$  gives also 
$$
\Ko_G(\T_G^* W^{gen})=B_G(W)\quad \mathrm{and}\quad \Ko_{G_\chi}(\T_{G_\chi}^* W^{gen,G_\chi})=B_{G_\chi}(W),
$$
the equality $\Ko_G(\T_G^* V^{gen})=A_G(V)$ will follows from following Lemma. 

\begin{lem}\label{lem:A-G-V-gen} 
We have 
\begin{itemize}
\item $\bj\left( B_{G_\chi}(W)\right)\subset B_G(V)$, 
\item $B_G(W)\subset \br(B_G(V))$, when $V^{gen}\cap W\neq\emptyset$.
\end{itemize} 
\end{lem}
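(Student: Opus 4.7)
The plan is to mimic the proof of Lemma~\ref{lem:A-G-V}: for each generator coming from a flag on $W$ I would build a matching $(G,V)$-flag $\varphi$ and then use Proposition~\ref{prop:sigmad-bott} to track what happens under $\bj$ or $\br$.

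For the first inclusion, let $\alpha=\bott(W^{\ggot_\chi}_\Cbb)\odot\sigmad^{W/W^{\ggot_\chi},\psi}$ with $\psi$ a $(G_\chi,W)$-flag of length $n=\dim\ggot_\chi-\dim\hgot_{min}$; since $\alpha$ is represented by a $G$-equivariant symbol, Lemma~\ref{lem:J-sigma} gives $\bj(\alpha)=\alpha\odot\sigmad^{\Cbb_\chi}$. Fix $\beta\in\ggot$ with $\ggot=\ggot_\chi\oplus\Rbb\beta$ and define a $(G,V)$-flag $\varphi$ of length $n+1$ by $V^\varphi_1:=\Cbb_\chi\oplus W^{\ggot_\chi}/W^\ggot$ with $\beta^\varphi_1:=\beta$, and $V^\varphi_{k+1}:=W^\psi_k$, $\beta^\varphi_{k+1}:=\beta^\psi_k$ for $k=1,\dots,n$. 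Condition~{\bf c2} at $k=1$ holds because $\chi(\beta)\neq 0$ makes $\beta$ bijective on $\Cbb_\chi$, and because any vector in $(W^{\ggot_\chi})^\beta$ is fixed by all of $\ggot=\ggot_\chi\oplus\Rbb\beta$ and therefore lies in $W^\ggot$; condition~{\bf c1} on later pieces follows from $\beta^\psi_k\in\ggot_\chi$ acting trivially on both $\Cbb_\chi$ and $W^{\ggot_\chi}/W^\ggot$. Splitting $\bott(W^{\ggot_\chi}_\Cbb)=\bott(V^\ggot_\Cbb)\odot\bott((W^{\ggot_\chi}/W^\ggot)_\Cbb)$ and applying the first bullet of Proposition~\ref{prop:sigmad-bott} to $V^\varphi_1=\Cbb_\chi\oplus W^{\ggot_\chi}/W^\ggot$ with $\beta$ converts $\bott((W^{\ggot_\chi}/W^\ggot)_\Cbb)\odot\sigmad^{\Cbb_\chi}$ into $\wedge^\bullet\overline{W^{\ggot_\chi}/W^\ggot}\otimes\sigmad^{V^\varphi_1,\beta}$, exhibiting $\bj(\alpha)$ as $\wedge^\bullet\overline{W^{\ggot_\chi}/W^\ggot}\cdot\bott(V^\ggot_\Cbb)\odot\sigmad^{V/V^\ggot,\varphi}\in B_G(V)$.

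For the second inclusion, the hypothesis $V^{gen}\cap W\neq\emptyset$ forces $\hgot_{min}^{G,W}=\hgot_{min}^{G,V}=:\hgot_{min}\subset\ggot_\chi$, so $(G,W)$- and $(G,V)$-flags share the same length $s$, and $\bar{\chi}\neq 0$ on $\ggot/\hgot_{min}$. Given $\alpha'=\bott(W^\ggot_\Cbb)\odot\sigmad^{W/W^\ggot,\mu}$, let $k_0$ be the largest index with $\chi(\beta^\mu_{k_0})\neq 0$ and define $\varphi$ by $V^\varphi_{k_0}:=W^\mu_{k_0}\oplus\Cbb_\chi$, $V^\varphi_k:=W^\mu_k$ for $k\neq k_0$, and $\beta^\varphi_k:=\beta^\mu_k$ throughout. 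Condition~{\bf c2} at $k_0$ holds because $\chi(\beta^\mu_{k_0})\neq 0$; condition~{\bf c1} at $k>k_0$ holds because $\chi(\beta^\mu_k)=0$ by maximality of $k_0$; the other conditions are inherited from $\mu$. The second bullet of Proposition~\ref{prop:sigmad-bott} applied to $V^\varphi_{k_0}=W^\mu_{k_0}\oplus\Cbb_\chi$ yields $\br(\sigmad^{V^\varphi_{k_0},\beta^\mu_{k_0}})=\sigmad^{\mu,k_0}$, and naturality of $\br$ with respect to the external product then gives $\br(\bott(V^\ggot_\Cbb)\odot\sigmad^{V/V^\ggot,\varphi})=\alpha'$.

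I expect two delicate points. The first is the naturality of $\br$ applied to the external product $\sigmad^{\varphi,1}\odot\cdots\odot\sigmad^{\varphi,s}$, so that $\br$ only affects the $k_0$-th factor where Proposition~\ref{prop:sigmad-bott} is then invoked; this is essentially a definition-chase. The main obstacle is that in the first inclusion the pieces $W^\psi_k$ must be complex $G$-subspaces and not merely $G_\chi$-stable in order that $V^\varphi_{k+1}:=W^\psi_k$ be a legitimate flag piece for $(G,V)$. Since $G$ is abelian, one can always refine $\psi$ to a flag whose pieces are unions of $G_\chi$-isotypic components of $W/W^{\ggot_\chi}$ (hence automatically $G$-stable) without changing the $R(G_\chi)$-submodule $B_{G_\chi}(W)$ generated, which is what makes the construction go through.
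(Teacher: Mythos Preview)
Your proof follows essentially the same strategy as the paper's: for the first inclusion you build the $(G,V)$-flag with first piece $\Cbb_\chi\oplus W^{\ggot_\chi}/W^{\ggot}$ and remaining pieces inherited from the $(G_\chi,W)$-flag, then apply Lemma~\ref{lem:J-sigma} and the first bullet of Proposition~\ref{prop:sigmad-bott}; for the second inclusion you pick the last index where $\chi$ pairs nontrivially with $\beta^\mu_{k}$, enlarge that slot by $\Cbb_\chi$, and invoke the second bullet of Proposition~\ref{prop:sigmad-bott}. This is exactly the paper's argument, down to the choice of which index to modify.

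You are in fact more careful than the paper on one point: you flag that the pieces $W^\psi_k$ of a $(G_\chi,W)$-flag are a priori only $G_\chi$-stable, whereas using them as pieces of a $(G,V)$-flag (and applying Lemma~\ref{lem:J-sigma}, which requires a $G$-equivariant representative) needs them to be $G$-stable. The paper silently assumes this. Your proposed fix---replacing an arbitrary $(G_\chi,W)$-flag by one whose pieces are sums of $G_\chi$-isotypic components, hence $G$-stable since $G$ is abelian---is the right idea, but the assertion that this does not shrink the $R(G_\chi)$-module $B_{G_\chi}(W)$ deserves a line of justification. One clean way: given $\beta^\psi_1,\ldots,\beta^\psi_n$, take the canonical $G$-stable flag $W^{[j]}:=W^{\ggot^{[j]}}$ (fixed points of $\ggot^{[j]}=\hgot_{min}\oplus\bigoplus_{i>j}\Rbb\beta^\psi_i$), check it satisfies {\bf c1}--{\bf c2}, and observe that the resulting $\sigmad$ class coincides with the original up to an $R(G_\chi)$-factor via Proposition~\ref{prop:sigmad-bott} applied stepwise.
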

\begin{proof} Let $\beta\in\ggot$ such that $\langle\bar{\chi},\beta\rangle >0$: we have $\ggot=\ggot_\chi\oplus\Rbb\beta$. 
For any $(W,G_\chi)$-flag $\varphi$, we consider the element 
$$
\alpha:= \bott(W^{\ggot_\chi}_\Cbb)\odot \sigmad^{W/W^{\ggot_\chi},\varphi}\in \Ko_{G_\chi}(\T_{G_\chi}^*W^{gen})
$$
and we want to compute its image by $\bj$. 

We note that the minimal stabilizer $H_{min}\subset G$ for the $G$-action on $V$ is equal to the minimal stabilizer for the $G_\chi$-action on $W$. Let $s:=\dim G_\chi -\dim H_{min}$. A $(G_\chi,W)$-flag $\varphi$  corresponds to 
\begin{itemize}
\item a decomposition $W/W^{\ggot_\chi}=W^\varphi_1\oplus\cdots\oplus W^\varphi_s$ in complex $G$-subspaces
\item a decomposition $\ggot_\chi=\hgot_{min}\oplus\Rbb\beta^\varphi_1\oplus\cdots\Rbb\beta^\varphi_s$
\end{itemize} 
such that for any $1\leq k\leq s$, $\beta^\varphi_k$ acts trivially on  $W^\varphi_1\oplus\cdots\oplus W^\varphi_{k-1}$ and $\beta^\varphi_k$ acts bijectively on $W^\varphi_k$. The term $\sigmad^{W/W^{\ggot_\chi},\varphi}\in\Ko_{G_\chi}(\T_{G_\chi}^* (W/W^{\ggot_\chi})^{gen})$ is equal to the product of $\sigmad^{\varphi,k}\in \Ko_{G_\chi}(\T_{\Rbb\beta^\varphi_k}^* W^\varphi_{k})$, for $1\leq k\leq s$.

Since $V/V^\ggot\simeq W^{\ggot_\chi}/W^{\ggot}\oplus\Cbb_\chi\oplus W/W^{\ggot_\chi}$, we can define a $(V,G)$-flag $\psi$ as follows: 
\begin{itemize}
\item $V_1^\psi:= W^{\ggot_\chi}/W^{\ggot}\oplus\Cbb_\chi$ and $\beta^\psi_1=\beta$, 
\item  $V_k^\psi:= W^\varphi_{k-1}$ and $\beta_k^\psi:= \beta^\varphi_{k-1}$ for $2\leq k\leq s+1$.
\end{itemize} 

We note that the $G_\chi$-transversally symbols $\sigmad^{\varphi,k}\in \Ko_{G_\chi}(\T_{\Rbb\beta^\varphi_k}W^\varphi_{k})$ correspond to the restriction of the $G$-transversally symbols $\sigmad^{\psi,k+1}\in \Ko_{G}(\T_{\Rbb\beta^\psi_{k+1}}V^\psi_{k+1})$.

Finally, thanks to Lemma \ref{lem:J-sigma}  we have
\begin{eqnarray*}
\bj(\alpha)&=&
\sigmad^{\Cbb_\chi}\odot\bott(V^{\ggot}_\Cbb)\odot\bott((W^{\ggot_\chi}/W^{\ggot})_\Cbb)\odot\sigmad^{\psi,2}\odot\cdots\odot \sigmad^{\psi,s}\\
&=&\wedge^\bullet\overline{W^{\ggot_\chi}/W^{\ggot}}\otimes\bott(V^{\ggot}_\Cbb)\odot\sigmad^{\psi,1}\odot\sigmad^{\psi,2}\odot\cdots\odot \sigmad^{\psi,s}\\
&=&\wedge^\bullet\overline{W^{\ggot_\chi}/W^{\ggot}}\otimes \bott(V^{\ggot}_\Cbb)\odot\sigmad^{V/V^\ggot,\psi}.
\end{eqnarray*}
Here we use the  identity $\sigmad^{\Cbb_\chi}\odot\bott(W^{\ggot_\chi}/W^{\ggot}_\Cbb)=
\wedge^\bullet\overline{W^{\ggot_\chi}/W^{\ggot}}\otimes\sigmad^{\psi,1}$, 
valid in $\Ko_G(\T^*_{\Rbb\beta}(\Cbb_\chi\oplus W^{\ggot_\chi}/W^{\ggot}))$, which is proved in Proposition \ref{prop:sigmad-bott}. Since $R(G)\to R(G_\chi)$ is onto, we have proved that $\bj(A_{G_\chi}(W))\subset A_G(V)$. 

\medskip

\medskip

Suppose now that $V^{gen}\cap W\neq\emptyset$, and let us prove now that $A_G(W)\subset\br(A_{G}(V))$. Let $\varphi$ 
be a $(G,W)$-flag : let $W/W^\ggot=W^\varphi_1\oplus\cdots\oplus W^\varphi_s$ and
 $\ggot=\hgot_{min}\oplus\Rbb\beta^\varphi_1\oplus\cdots\oplus\Rbb\beta^\varphi_s$ be 
 the corresponding decompositions. The hypothesis $V^{gen}\cap W\neq\emptyset$ means that the minimal stabilizer 
 $\hgot_{min}$ for the $\ggot$-action in $W$ is contained in $\ggot_\chi$. Hence $\bar{\chi}$ does not belongs 
 to $(\Rbb\beta^\varphi_1\oplus\cdots\oplus\Rbb\beta^\varphi_s)^\perp$. Let 
$$
k=\max\{i\ \vert \ \langle\bar{\chi},\beta^\varphi_i\rangle\neq 0\}.
$$
Let $\psi$ be the $(G,V)$-flag defined as follows: 
\begin{itemize}
\item $V^\psi_i= W^\varphi_i$ is $i\neq k$, and $V^\psi_k= W^\varphi_k\oplus\Cbb_\chi$,
\item $\beta^\varphi_s=\beta^\varphi_s$ for $1\leq k\leq s$.
\end{itemize}

Then we have 
\begin{eqnarray*}
\br(\bott(V^\ggot_\Cbb)\odot\sigmad^{V/V^\ggot,\psi})&=&
\bott(W^\ggot_\Cbb)\odot\sigmad^{\varphi,1}\odot\cdots\br(\sigmad^{\psi,k})\cdots\odot \sigmad^{\varphi,s}\\
&=&\bott(W^\ggot_\Cbb)\odot\sigmad^{\varphi,1}\odot\cdots\sigmad^{\varphi,k}\cdots\odot \sigmad^{\varphi,s}\\
&=&\bott(W^\ggot_\Cbb)\odot\sigmad^{W/W^\ggot,\varphi}
\end{eqnarray*}
We use here the relation $\br(\sigmad^{\psi,k})=\sigmad^{\varphi,k}$ (see Proposition \ref{prop:R-thom-beta}). 
It proves that $A_G(W)\subset\br(A_{G}(V))$.
\end{proof}

\subsection{$\Ko_G(\T^*_G V)$ is isomorphic to $\Fcal_G(V)$}\label{sec:proof-F-CPV}

For any $G$-module $V$, we denote $\Fcal_G(V)'$ the image of $\Ko_G(\T^*_G V)$  by $\indice^G_V$. We know from Section \ref{subsec:proof-injective} that the index map $\indice^G_V$ is injective, hence $\Fcal_G(V)'\simeq\Ko_G(\T^*_G V)$. Let $\Fcal_G(V)$ be the generalized Dahmen-Michelli submodule defined in the introduction. We start with the following

\begin{lem}
We have  $\Fcal_G(V)'\subset \Fcal_G(V)$.
\end{lem}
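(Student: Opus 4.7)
The plan is to verify the defining condition of $\Fcal_G(V)$ pointwise in $\hgot$ by applying the restriction-index formula of Proposition \ref{prop:restriction}, using the fixed-point space $V^\hgot$ as the base of a trivial $G$-equivariant complex vector bundle whose total space is $V$.

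Fix $\hgot \in \Delta_G(V)$ and let $H = \exp(\hgot)$. Choose a $G$-invariant linear complement $W$ to $V^\hgot$ in $V$, equipped with the complex structure inherited from $V/V^\ggot$ via the identification $W \simeq V/V^\hgot \subset V/V^\ggot$. Then $V$ is the total space of the trivial $G$-equivariant complex vector bundle $\Ecal := V^\hgot \times W \to V^\hgot$, so Proposition \ref{prop:restriction} supplies a restriction morphism
$$
\br : \KK_G(\T^*_G V) \longrightarrow \KK_G(\T^*_G V^\hgot)
$$
satisfying
$$
\indice^G_{V^\hgot}(\br(\sigma)) = \indice^G_V\bigl(\sigma\otimes \wedge^\bullet \pi^*\overline{\Ecal}\bigr)
$$
for every $\sigma \in \Ko_G(\T^*_G V)$. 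Since $\Ecal$ is a trivial bundle with fiber $W$, the class $\wedge^\bullet \pi^*\overline{\Ecal}$ is pulled back from the virtual $G$-representation $\wedge^\bullet \overline{W}$; by the $R(G)$-linearity of the index map, the right-hand side therefore equals $\Phi \cdot \wedge^\bullet \overline{V/V^\hgot}$, where $\Phi := \indice^G_V(\sigma)$.

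This yields the identity $\wedge^\bullet \overline{V/V^\hgot} \otimes \Phi = \indice^G_{V^\hgot}(\br(\sigma))$. Because $H$ acts trivially on $V^\hgot$, Remark \ref{rem:indice-H-trivial} guarantees that the right-hand side belongs to $\langle R^{-\infty}(G/H)\rangle$. As $\hgot \in \Delta_G(V)$ was arbitrary, this is precisely the condition $\Phi \in \Fcal_G(V)$. No step presents a substantial obstacle: Proposition \ref{prop:restriction}, the $R(G)$-module structure of the index, and Remark \ref{rem:indice-H-trivial} combine directly to produce the conclusion. The only minor point to record is that the membership condition is insensitive to the choice of invariant complex structure used on $W$, as two such structures differ by an invertible element of $R(G)$ (compare Remark \ref{rem:thom-J-K}), and they yield the same condition in $\langle R^{-\infty}(G/H)\rangle$.
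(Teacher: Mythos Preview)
Your proof is correct and follows essentially the same route as the paper: for each $\hgot\in\Delta_G(V)$ you invoke the restriction morphism $\br$ to $V^\hgot$, apply Proposition~\ref{prop:restriction} to obtain $\wedge^\bullet\overline{V/V^\hgot}\otimes\indice^G_V(\sigma)=\indice^G_{V^\hgot}(\br(\sigma))$, and conclude via Remark~\ref{rem:indice-H-trivial}. The only cosmetic difference is that you cite the third bullet of Proposition~\ref{prop:restriction} directly, whereas the paper derives the same identity from the second bullet together with the index-preserving property of $i_!$.
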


\begin{proof} 
Let $\sigma\in \Ko_G(\T^*_G V)$ and let $\hgot\in\Delta_G(V)$. Since the vector space $V/V^\hgot$ carries an 
invariant complex structure we have a restriction morphism $\br_\hgot:\Ko_G(\T^*_G V)\to \Ko_G(\T^*_G V^\hgot)$. 
Let $i_!: \Ko_G(\T^*_G V^\hgot)\to \Ko_G(\T^*_G V)$ be the push-forward morphism associated to 
the inclusion $V^\hgot\croc V$. Thanks to Proposition \ref{prop:restriction} we know that $i_!\circ \br_\hgot(\sigma)= \sigma\otimes 
\wedge^\bullet\overline{V/V^\hgot}$, and then 
\begin{equation}\label{eq:restriction-prop}
\wedge^\bullet\overline{V/V^\hgot}\otimes \indice^G_V(\sigma)= \indice^G_{V^\hgot}(\br_\hgot(\sigma)).
\end{equation}
But since the action of $H$ is trivial on $V^\hgot$, we know that $\indice^G_{V^\hgot}(\br_\hgot(\sigma))\in \langle R^{-\infty}(G/H)\rangle$ (see Remark \ref{rem:indice-H-trivial}). The inclusion $\Fcal_G(V)'\subset \Fcal_G(V)$ is proved.
\end{proof}

\medskip

We will now prove by induction on $n\geq 0$ the following fact
$$
(\mathrm{H}''_n) \qquad \Fcal_G(V)'=\Fcal_G(V)\quad  \ \ \mathrm{if}\ \  \dim V/V^\ggot\leq n.
$$

\medskip

If $\dim V/V^\ggot=0$, we have $\T_G^* V=\T^* V$ and $\Delta_G(V)=\{\ggot\}$. In this situation, $\hgot_{min}=\ggot$ and $\langle R^{-\infty}(G/H_{min})\rangle= R(G)$. We have then $\Fcal_G(V)=R(G)$, and $(\mathrm{H}''_0)$ is a  direct consequence of the Bott isomorphism. 

Suppose now that $(\mathrm{H}''_n)$ and is true, and consider $G\circlearrowleft V$ 
such that $\dim V/V^\ggot= n+1$. We have a decomposition $V=W\oplus \Cbb_\chi$ with $\bar{\chi}\neq 0$.  
If we apply $(\mathrm{H}''_n)$ to  $G\circlearrowleft W$ and $G_\chi\circlearrowleft W$, we get $\Fcal_G(W)'=\Fcal_G(W)$ and $\Fcal_{G_\chi}(W)'=\Fcal_{G_\chi}(W)$. The following Lemma will be the key point of our induction.

\medskip

\begin{lem}\label{lem-G-a-dm}
$\bullet$ Let $H\subset G_\chi$ be a closed subgroup ($G$ is abelian). For any $\Phi\in R^{-\infty}(G_\chi)$, we have the equivalences
\begin{equation}\label{eq:induction-support}
\Phi\in \langle R^{-\infty}(G_\chi/H)\rangle\Longleftrightarrow \indGa(\Phi)\in  \langle R^{-\infty}(G/H)\rangle,
\end{equation}
\begin{equation}\label{eq:induction-F-G-V}
\Phi\in \Fcal_{G_\chi}(W)\Longleftrightarrow \indGa(\Phi)\in  \Fcal_{G}(V).
\end{equation}

$\bullet$ The exact sequence (\ref{eq:sequence-G-chi}) specializes in the exact sequence
\begin{equation}\label{eq:sequence-G-chi-F}
0\longrightarrow \Fcal_{G_\chi}(W)\stackrel{\indGa}{\longrightarrow}
\Fcal_G(V)\stackrel{\wedge^\bullet \overline{\Cbb_{\chi}}}{\longrightarrow}
\Fcal_G(W).
\end{equation}
\end{lem}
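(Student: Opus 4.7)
The strategy is to combine a concrete splitting with Lemma \ref{lem-G-a}. First I would pick $\beta\in\ggot$ with $\langle\bar\chi,\beta\rangle>0$; this yields $\ggot=\ggot_\chi\oplus\Rbb\beta$ and a splitting $G\simeq G_\chi\times\Tbb_\beta$, hence $\widehat{G}\simeq\widehat{G_\chi}\times\Zbb$. Under this identification, induction takes the explicit form $\indGa(\Phi)=\Phi\otimes\delta_1$, where $\delta_1=\sum_{n\in\Zbb}t^n$ is the Dirac generalized character at $1\in\Tbb_\beta$, so in particular $\supp(\indGa(\Phi))=\supp(\Phi)\times\Zbb$. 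Frobenius reciprocity further gives the $R(G)$-linearity identity $\indGa(a|_{G_\chi}\Phi)=a\cdot\indGa(\Phi)$ for every $a\in R(G)$.

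With this setup, (\ref{eq:induction-support}) is pure bookkeeping: since $H\subset G_\chi$, the map $\pi_H:\widehat{G}\to\widehat{H}$ factors through the first projection $\widehat{G_\chi}\times\Zbb\to\widehat{G_\chi}$, so $\pi_H(\supp(\indGa(\Phi)))$ equals the image of $\supp(\Phi)$ under the restriction $\widehat{G_\chi}\to\widehat{H}$, and one is finite precisely when the other is.

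For (\ref{eq:induction-F-G-V}), the main step is a dichotomy on $\Delta_G(V)$: each $\hgot\in\Delta_G(V)$ is either contained in $\ggot_\chi$, in which case inspection of the stabilizer of a point $(w,z)\in W\oplus\Cbb_\chi$ with $z\neq 0$ shows $\hgot\in\Delta_{G_\chi}(W)$ and $V/V^\hgot=W/W^\hgot$; or it is not contained in $\ggot_\chi$, in which case $\Cbb_\chi^\hgot=\{0\}$ and $V/V^\hgot=W/W^\hgot\oplus\Cbb_\chi$. A direct argument also shows that every element of $\Delta_{G_\chi}(W)$ arises from the first case, so the $\hgot$-conditions match up bijectively. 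In the first case, Frobenius rewrites the $\hgot$-condition for $\indGa(\Phi)\in\Fcal_G(V)$ as $\indGa(\wedge^\bullet\overline{W/W^\hgot}\otimes\Phi)\in\langle R^{-\infty}(G/H)\rangle$, which by (\ref{eq:induction-support}) is exactly the $\hgot$-condition for $\Phi\in\Fcal_{G_\chi}(W)$. In the second case, the factor $\wedge^\bullet\overline{\Cbb_\chi}=1-\overline{\Cbb_\chi}$ annihilates $\indGa(\Phi)$ by exactness of (\ref{eq:sequence-G-chi}), so the condition is automatic. Both directions of (\ref{eq:induction-F-G-V}) follow.

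For the exact sequence (\ref{eq:sequence-G-chi-F}), the same dichotomy first gives that multiplication by $\wedge^\bullet\overline{\Cbb_\chi}$ sends $\Fcal_G(V)$ into $\Fcal_G(W)$: for any $\hgot'\in\Delta_G(W)\subset\Delta_G(V)$, the required membership of $\langle R^{-\infty}(G/H')\rangle$ is either the defining condition for $\Phi\in\Fcal_G(V)$ directly, or is obtained from it by multiplying by $\wedge^\bullet\overline{\Cbb_\chi}$ and using that $\langle R^{-\infty}(G/H')\rangle$ is an $R(G)$-submodule. Injectivity of $\indGa$ on $\Fcal_{G_\chi}(W)$ is inherited from Lemma \ref{lem-G-a}, and exactness at $\Fcal_G(V)$ is obtained by combining Lemma \ref{lem-G-a} (to lift a kernel element $\Phi$ to $\Psi\in R^{-\infty}(G_\chi)$ with $\indGa(\Psi)=\Phi$) with (\ref{eq:induction-F-G-V}) (to promote $\Psi$ into $\Fcal_{G_\chi}(W)$). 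The main obstacle throughout is to keep the dichotomy on $\Delta_G(V)$ clean and to identify $\Delta_{G_\chi}(W)$ as exactly the subset of elements lying in $\ggot_\chi$; once this bridge is set up, both bullets reduce mechanically to Lemma \ref{lem-G-a} and (\ref{eq:induction-support}).
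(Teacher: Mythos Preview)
Your proposal is correct and follows essentially the same approach as the paper: the same dichotomy on $\hgot\in\Delta_G(V)$ according to whether $\hgot\subset\ggot_\chi$, the same identification $V/V^\hgot=W/W^\hgot$ in the first case and the factorization through $\wedge^\bullet\overline{\Cbb_\chi}$ in the second, and the same reduction of the exact sequence to Lemma~\ref{lem-G-a} combined with~(\ref{eq:induction-F-G-V}). The only cosmetic difference is that you work through an explicit splitting $G\simeq G_\chi\times\Tbb_\beta$ (giving $\supp(\indGa(\Phi))=\supp(\Phi)\times\Zbb$), whereas the paper phrases the same computation via the projection $\pi_{G_\chi}:\widehat{G}\to\widehat{G_\chi}$ and the identity $\supp(\indGa(\Phi))=\pi_{G_\chi}^{-1}(\supp(\Phi))$; you are also more explicit about the bijection between $\Delta_{G_\chi}(W)$ and $\{\hgot\in\Delta_G(V):\hgot\subset\ggot_\chi\}$ and about why $\wedge^\bullet\overline{\Cbb_\chi}\cdot\Fcal_G(V)\subset\Fcal_G(W)$, which the paper leaves to the reader.
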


\medskip

\begin{proof} Let us consider the first point. For $\Phi:=\sum_{\mu\in\widehat{G_\chi}} m(\mu)\,\Cbb_\mu \in R^{-\infty}(G_\chi)$,  we have 
$\indGa(\Phi)= \sum_{\varphi\in\widehat{G}} m(\pi_{G_\chi}(\varphi))\, \Cbb_\varphi$, where 
$\pi_{G_\chi}: \widehat{G}\to \widehat{G_\chi}$. We see then that $\supp(\indGa(\Phi))= \pi_{G_\chi}^{-1}\left(\supp(\Phi)\right)$. If 
$\pi_H: \widehat{G}\to \widehat{H}$ and $\pi'_H: \widehat{G_\chi}\to \widehat{H}$ denote the projections, we have then the following relation
$$
\pi_H\left(\supp\left(\indGa(\Phi)\right)\right)=\pi_H'\left(\supp(\Phi)\right)
$$
that induces (\ref{eq:induction-support}).

For any $\Phi\in R^{-\infty}(G_\chi)$ and any subspace $\hgot\in\Delta_G(V)$, we consider the  expression $\Omega:=\wedge^\bullet\overline{V/V^\hgot}\otimes \indGa(\Phi)$. We have two cases:
\begin{itemize}
\item Either $\hgot \nsubseteq \ggot_\chi$ : here $\Cbb_\chi\subset V/V^\hgot$ and 
$\wedge^\bullet\overline{V/V^\hgot}=\wedge^\bullet\overline{\Cbb_\chi}\otimes \delta$. In this case, $\Omega=0$ because 
$\wedge^\bullet\overline{\Cbb_\chi}\circ \indGa=0$.

\item Or $\hgot \subset \ggot_\chi$ : here $\hgot\in\Delta_{G_\chi}(W)$  and $V/V^\hgot=W/W^\hgot$. In this case $\Omega=
\indGa(\wedge^\bullet\overline{W/W^\hgot}\otimes\Phi)$. 
\end{itemize}
It is then immediate that the equivalence (\ref{eq:induction-F-G-V}) follows from (\ref{eq:induction-support}). 

Thanks to (\ref{eq:induction-F-G-V}) it is an easy matter to check that the sequence (\ref{eq:sequence-G-chi-F}) is exact at 
$\Fcal_G(V)$. We leave to the reader the checking that $\wedge^\bullet \overline{\Cbb_{\chi}}\cdot \Fcal_G(V)\subset \Fcal_G(W)$.
So the second point is proved.
\end{proof}

\medskip

Let $I_\Fcal: \Fcal_G(V)'\croc\Fcal_G(V)$ be the inclusion.  Finally, we have the following commutative diagram, 
where all the horizontal sequences are exact :
$$
\xymatrix@C=10mm{ 
0\ar[r] & \Ko_{G_\chi}(\T_{G_\chi}^*W) \ar[d]\ar[r]^{\bj}&
\Ko_G(\T_G^* V)\ar[d]\ar[r]^{\br}&
\Ko_G(\T_G^* W)\ar[d] \ar[r] & 0\\
0\ar[r] & \Fcal_{G_\chi}(W)' \ar[d]\ar[r]^{\indGa} &
\Fcal_{G}(V)'\ar[d]^{I_\Fcal}\ar[r]^{\wedge^\bullet \overline{\Cbb_{\chi}}} &
\Fcal_{G}(W)' \ar[d]\ar[r] & 0\\
& \Fcal_{G_\chi}(W)\ar[r]^{\indGa} &
\Fcal_{G}(V) \ar[r]^{\wedge^\bullet \overline{\Cbb_{\chi}}} &
\Fcal_{G}(W). & 
}
$$
Except for $I_\Fcal$, we know that all the vertical arrows are isomorphism. It is an easy exercise to check that 
$I_\Fcal$ must be an isomorphism.

\subsection{$\Ko_G(\T^*_G V^{gen})$ is isomorphic to $\dm_G(V)$}\label{sec:proof-dm-CPV}

For any $G$-module $V$, we denote $\dm_G(V)'$ the image of $\Ko_G(\T^*_G V^{gen})$ by $\indice^G_V$. Since the maps 
$j_*:\Ko_G(\T^*_G V^{gen})\to \Ko_G(\T^*_G V)$ and $\indice^G_V$ are injective (see Remark \ref{rem:j-injective} and Section \ref{subsec:proof-injective}), we have 
$$
\dm_G(V)'\simeq\Ko_G(\T^*_G V^{gen}).
$$ 
for any $G$-module. Let $\dm_G(V)$ be the generalized Dahmen-Michelli submodules defined in the introduction. We start with the following 

\medskip

\begin{lem}
We have  $\dm_G(V)'\subset\dm_G(V)$.
\end{lem}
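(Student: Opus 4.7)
Given $\sigma\in\Ko_G(\T^*_GV^{gen})$, set $\tau:=j_*(\sigma)\in\Ko_G(\T^*_GV)$ and $\Phi:=\indice^G_V(\tau)=\indice^G_{V^{gen}}(\sigma)$. To show $\Phi\in\dm_G(V)$ I must verify the two defining conditions of $\dm_G(V)$: namely that $\Phi\in\langle R^{-\infty}(G/H_{min})\rangle$, and that $\wedge^\bullet\overline{V/V^\hgot}\otimes\Phi=0$ for every $\hgot\in\Delta_G(V)$ with $\hgot\neq\hgot_{min}$. The plan is to deduce the first condition directly from the triviality of the $H_{min}$-action on $V^{gen}$, and to deduce the second condition from the disjointness $V^{gen}\cap V^\hgot=\emptyset$ combined with the excision argument already used for $\Fcal_G(V)'\subset\Fcal_G(V)$.

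For the first condition, I would first note that $\hgot_{min}$ acts trivially on the whole of $V$: since the linear infinitesimal action of $\hgot_{min}$ vanishes on the non-empty open subset $V^{gen}$, it vanishes on all of $V$. Consequently, $H_{min}$ acts trivially on the invariant open submanifold $V^{gen}$, so Remark \ref{rem:indice-H-trivial} applied to the $G$-manifold $V^{gen}$ yields $\Phi=\indice^G_{V^{gen}}(\sigma)\in\langle R^{-\infty}(G/H_{min})\rangle$.

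For the second condition, I would apply the identity (\ref{eq:restriction-prop}) already established for $\tau$:
\begin{equation*}
\wedge^\bullet\overline{V/V^\hgot}\otimes\Phi \;=\; \indice^G_{V^\hgot}\bigl(\br_\hgot(\tau)\bigr),
\end{equation*}
and argue that $\br_\hgot(\tau)=0$ whenever $\hgot\neq\hgot_{min}$. The key point is that $V^{gen}\cap V^\hgot=\emptyset$: a point in the intersection would have stabilizer equal to $\hgot_{min}$ and simultaneously containing $\hgot$, which combined with the inclusion $\hgot_{min}\subset\hgot$ from the previous step would force $\hgot=\hgot_{min}$. Therefore the inclusion $j:V^{gen}\croc V$ factors through the invariant open subset $V\setminus V^\hgot$, so $\tau$ lies in the image of the pushforward $\Ko_G(\T^*_G(V\setminus V^\hgot))\to\Ko_G(\T^*_GV)$. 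By the six-term exact sequence (\ref{exact.sequence}) applied to the open--closed decomposition of $\T^*_GV$ along $V^\hgot$, the restriction of $\tau$ to $\T^*_GV|_{V^\hgot}$ vanishes; since $\br_\hgot$ is by definition this restriction composed with the Bott--Thom isomorphism, we get $\br_\hgot(\tau)=0$, hence $\wedge^\bullet\overline{V/V^\hgot}\otimes\Phi=0$.

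The only delicate point is the disjointness $V^{gen}\cap V^\hgot=\emptyset$ for $\hgot\neq\hgot_{min}$, which rests on the elementary but crucial fact that $\hgot_{min}$ is contained in every infinitesimal stabilizer in $\Delta_G(V)$; once this is in hand, everything else reduces to the excision-style argument already familiar from the proof of $\Fcal_G(V)'\subset\Fcal_G(V)$.
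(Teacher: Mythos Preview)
Your proof is correct and follows essentially the same approach as the paper's own proof: both verify the membership condition via Remark~\ref{rem:indice-H-trivial} (triviality of the $H_{min}$-action), and both deduce the vanishing condition from $V^{gen}\cap V^\hgot=\emptyset$ together with identity~(\ref{eq:restriction-prop}). You simply spell out in more detail why $\hgot_{min}$ acts trivially on all of $V$, why the disjointness holds, and why this forces $\br_\hgot\circ j_*=0$ via the six-term exact sequence, whereas the paper states these points more tersely.
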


\begin{proof} 
Let $\tau\in  \Ko_G(\T^*_G V^{gen})$ and $j_*(\tau)\in \Ko_G(\T^*_G V)$. First we remark that 
$\indice^G_V(\tau)\in\langle R^{-\infty}(G/H_{min})\rangle$ since $H_{min}$ acts trivially on $V$ (see Remark \ref{rem:indice-H-trivial}).

 Let $\hgot\neq\hgot_{min}$ be a stabilizer in $\Delta_G(V)$. Since $V^\hgot\cap V^{gen}=\emptyset$ the composition 
$\br_\hgot\circ j_*$ is the zero map, and (\ref{eq:restriction-prop}) gives in this case that 
$\wedge^\bullet\overline{V/V^\hgot}\otimes \indice^G_V(j_*(\sigma))=0$. Since by definition $\indice^G_V(\tau)=\indice^G_V(j_*(\tau))$, the inclusion $\dm_G(V)'\subset\dm_G(V)$ is proved. 
\end{proof}

\medskip

We will now prove by induction on $n\geq 0$ the following fact
$$
(\mathrm{H}'''_n) \qquad \dm_G(V)'=\dm_G(V)
\ \ \mathrm{if}\ \  \dim V/V^\ggot\leq n.
$$

\medskip

If $\dim V/V^\ggot=0$, we have $\T_G^* V=\T^* V$, $V^{gen}=V$ and $\Delta_G(V)=\{\ggot\}$. In this situation, $\hgot_{min}=\ggot$ and $\langle R^{-\infty}(G/H_{min})\rangle= R(G)$. We have then $\dm_G(V)=R(G)$, and assertion $(\mathrm{H}'''_0)$ is a  direct consequence of the Bott isomorphism. 

Suppose now that $(\mathrm{H}'''_n)$ and is true, and consider $G\circlearrowleft V$ 
such that $\dim V/V^\ggot= n+1$. We have a decomposition $V=W\oplus \Cbb_\chi$ with $\bar{\chi}\neq 0$.  
If we apply $(\mathrm{H}'''_n)$ to  $G\circlearrowleft W$ and $G_\chi\circlearrowleft W$, we get  $\dm_G(W)'=\dm_G(W)$ and 
$\dm_{G_\chi}(W)'=\dm_{G_\chi}(W)$.

It works like in the previous section, apart for the dichotomy concerning $V^{gen}\cap W$.  We have the following 
\begin{lem}\label{lem-G-a-dm-2}
$\bullet$ Let $H\subset G_\chi$ be a closed subgroup ($G$ is abelian). For any $\Phi\in R^{-\infty}(G_\chi)$, we have the equivalences
\begin{equation}\label{eq:dm-induction}
\Phi\in \dm_{G_\chi}(W)\Longleftrightarrow \indGa(\Phi)\in  \dm_{G}(V).
\end{equation}
$\bullet$ If $V^{gen}\cap W\neq \emptyset$, the exact sequence (\ref{eq:sequence-G-chi}) specializes in the exact sequence
\begin{equation}\label{eq:sequence-G-chi-DM-1}
0\longrightarrow \dm_{G_\chi}(W)\stackrel{\indGa}{\longrightarrow}
\dm_G(V)\stackrel{\wedge^\bullet \overline{\Cbb_{\chi}}}{\longrightarrow}
\dm_G(W).
\end{equation}
$\bullet$ If $V^{gen}\cap W=\emptyset$, the exact sequence (\ref{eq:sequence-G-chi}) induces the isomorphism
\begin{equation}\label{eq:sequence-G-chi-DM-2}
\indGa: \dm_{G_\chi}(W)\stackrel{\sim}{\longrightarrow}
\dm_G(V).
\end{equation}
\end{lem}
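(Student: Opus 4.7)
The plan is to dispatch the three bullets in order, leveraging Lemma~\ref{lem-G-a-dm} together with one structural fact about linear abelian actions. Analyzing the stabilizer of a point $(w,z)\in V$ by cases $z=0$ versus $z\neq 0$ gives $\Delta_G(V)=\Delta_G(W)\cup\Delta_{G_\chi}(W)$, and $\{\hgot\in\Delta_G(V) : \hgot\subseteq\ggot_\chi\}=\Delta_{G_\chi}(W)$. Since $G$ is abelian and acts \emph{linearly} on $W$, the weight decomposition $W=\bigoplus_\alpha W_\alpha$ shows that the generic $G$-stabilizer $\hgot_1:=\hgot_{min}^{W,G}$ coincides with $\bigcap_{\alpha : W_\alpha\neq 0}\ker\bar\alpha$, i.e.\ with the infinitesimal kernel of the representation on $W$, so $W^{\hgot_1}=W$. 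The same observation applied to the $G$-action on $V$ yields $\hgot_{min}^{V,G}=\hgot_1\cap\ggot_\chi=\hgot_{min}^{W,G_\chi}$ in both Case~A and Case~B; in Case~A (i.e.\ $\hgot_1\subseteq\ggot_\chi$) this further equals $\hgot_1$, while in Case~B one has $\hgot_1\not\subseteq\ggot_\chi$ and $\hgot_1\neq\hgot_{min}^{V,G}$.

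For the first bullet I combine the support equivalence (\ref{eq:induction-support}) from Lemma~\ref{lem-G-a-dm} with the dichotomy already established in its proof: $\wedge^\bullet\overline{V/V^\hgot}\otimes\indGa(\Phi)=0$ when $\hgot\not\subseteq\ggot_\chi$ (because $\Cbb_\chi$ is a summand of $V/V^\hgot$ and $\wedge^\bullet\overline{\Cbb_\chi}\cdot\indGa=0$), while for $\hgot\in\Delta_{G_\chi}(W)$ we have $V/V^\hgot=W/W^\hgot$ and thus $\wedge^\bullet\overline{V/V^\hgot}\otimes\indGa(\Phi)=\indGa(\wedge^\bullet\overline{W/W^\hgot}\otimes\Phi)$. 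Because $\indGa$ is injective and $\hgot_{min}^{V,G}=\hgot_{min}^{W,G_\chi}$, the full set of vanishing conditions defining $\dm_G(V)$ for $\indGa(\Phi)$ collapses precisely to the vanishing conditions defining $\dm_{G_\chi}(W)$ for $\Phi$, yielding (\ref{eq:dm-induction}).

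For the second bullet, injectivity of $\indGa$ is Lemma~\ref{lem-G-a}, and the inclusion $\indGa(\dm_{G_\chi}(W))\subseteq\dm_G(V)$ is the first bullet. To verify $\wedge^\bullet\overline{\Cbb_\chi}\cdot\dm_G(V)\subseteq\dm_G(W)$ in Case~A, I use that $\langle R^{-\infty}(G/H_{min}^{W,G})\rangle$ is an $R(G)$-submodule combined with $H_{min}^{V,G}=H_{min}^{W,G}$, and check the required vanishing at each $\hgot\in\Delta_G(W)\setminus\{\hgot_{min}^{W,G}\}$: when $\hgot\not\subseteq\ggot_\chi$, $V/V^\hgot=W/W^\hgot\oplus\Cbb_\chi$ and the desired expression is exactly $\wedge^\bullet\overline{V/V^\hgot}\otimes\Psi=0$; when $\hgot\subseteq\ggot_\chi$, $V/V^\hgot=W/W^\hgot$ and the expression is $\wedge^\bullet\overline{\Cbb_\chi}$ times $\wedge^\bullet\overline{V/V^\hgot}\otimes\Psi=0$. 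Exactness in the middle of (\ref{eq:sequence-G-chi-DM-1}) then follows by intersecting the exactness of (\ref{eq:sequence-G-chi}) with $\dm_G(V)$ and applying the first bullet to identify the kernel.

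The third bullet is the crux, and it is here that the linearity recorded in the preliminaries intervenes decisively. Given $\Psi\in\dm_G(V)$ in Case~B, I apply the defining vanishing condition at the specific stabilizer $\hgot_1$: since $W^{\hgot_1}=W$ while $\Cbb_\chi^{\hgot_1}=\{0\}$ (the latter because $\hgot_1\not\subseteq\ggot_\chi$ forces $\bar\chi|_{\hgot_1}\neq 0$), we obtain $V^{\hgot_1}=W$, hence $V/V^{\hgot_1}=\Cbb_\chi$, and the vanishing at $\hgot_1$ reads $\wedge^\bullet\overline{\Cbb_\chi}\otimes\Psi=0$. By the exact sequence of Lemma~\ref{lem-G-a}, $\Psi=\indGa(\Phi)$ for some $\Phi\in R^{-\infty}(G_\chi)$, and the first bullet forces $\Phi\in\dm_{G_\chi}(W)$; combined with injectivity of $\indGa$ this yields the isomorphism (\ref{eq:sequence-G-chi-DM-2}). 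The main obstacle is precisely the identification $W^{\hgot_1}=W$: without this structural input from linearity one would only obtain $\wedge^\bullet\overline{W/W^{\hgot_1}}\otimes\wedge^\bullet\overline{\Cbb_\chi}\otimes\Psi=0$, which in general does not imply $\wedge^\bullet\overline{\Cbb_\chi}\otimes\Psi=0$.
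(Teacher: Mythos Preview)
Your argument is correct and follows essentially the same route as the paper: the same dichotomy $\hgot\subseteq\ggot_\chi$ versus $\hgot\nsubseteq\ggot_\chi$ for the first bullet, the same case analysis on $\hgot\in\Delta_G(W)$ for the second, and for the third bullet the same key step of applying the vanishing condition at $\hgot_{min}(W)\in\Delta_G(V)$ to deduce $\wedge^\bullet\overline{\Cbb_\chi}\otimes\Psi=0$. Your write-up is in fact more explicit than the paper's on two points the paper leaves implicit: the decomposition $\Delta_G(V)=\Delta_G(W)\cup\Delta_{G_\chi}(W)$ and the identity $W^{\hgot_1}=W$ (which is exactly why the paper can assert $V/V^{\hgot}=\Cbb_\chi$ for $\hgot=\hgot_{min}(W)$ without comment).
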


\begin{proof} Let $\Phi\in R^{-\infty}(G_\chi)$ and $\hgot\in\Delta_G(V)$. We consider the term 
$\Omega:=\wedge^\bullet\overline{V/V^\hgot}\otimes \indGa(\Phi)$. Like in the proof of lemma \ref{lem-G-a-dm}, we have two cases:
\begin{itemize}
\item Either $\hgot \nsubseteq \ggot_\chi$ : in this case  $\Omega=0$.
\item Or $\hgot \subset \ggot_\chi$ : here $\hgot\in\Delta_{G_\chi}(W)$ and $V/V^\hgot=W/W^\hgot$. In this case 
$\Omega=\indGa(\eta)$ with $\eta=\wedge^\bullet\overline{W/W^\hgot}\otimes\Phi$. 
\end{itemize}

Since the minimal stabilizer\footnote{Says $\hgot_{min}$ wih corresponding group $H_{min}=\exp(\hgot_{min})$.} for the $G_\chi$ action on $W$ coincides with the minimal stabilizer for the $G$ action on $V$, the relation 
(\ref{eq:induction-support}) induces the equivalence $\Phi\in \langle R^{-\infty}(G_\chi/H_{min})\rangle$ $\Longleftrightarrow \indGa(\Phi)\in  \langle R^{-\infty}(G/H_{min})\rangle$. For the stabilizers $\hgot_{min}\varsubsetneq \hgot\subset \ggot_\chi$, 
using the fact that $\indGa$ is injective, we see that $\wedge^\bullet\overline{V/V^\hgot}\otimes \indGa(\Phi)=0$ if and only if $\wedge^\bullet\overline{W/W^\hgot}\otimes \Phi=0$. The first point follows.

Thanks to (\ref{eq:dm-induction}) it is an easy matter to check that the sequence (\ref{eq:sequence-G-chi}) specializes in the exact sequence
$0\to \dm_{G_\chi}(W)\stackrel{\alpha}{\longrightarrow} \dm_G(V)\stackrel{\beta}{\longrightarrow}
R^{-\infty}(G)$, where $\alpha=\indGa$ and $\beta= \wedge^\bullet \overline{\Cbb_{\chi}}$.  We can precise this sequence as follows. 

Let $\hgot_{min}(W),\hgot_{min}(V)$ be respectively the minimal infinitesimal stabilizer for the $G$-action on $W$ and $V$. We note that $V^{gen}\cap W\neq \emptyset\Longleftrightarrow \hgot_{min}(W)\subset \ggot_\chi\Longleftrightarrow \hgot_{min}(W)=\hgot_{min}(V)$.

Suppose that $V^{gen}\cap W\neq \emptyset$, and let us check that the image of $\beta$ is contained in $\dm_G(W)$. 
Take $\Phi\in\dm_G(V)$ and $\hgot\in\Delta_G(W)$. Let $\beta(\Phi)=\wedge^\bullet\overline{\Cbb_\chi}\otimes\Phi$.  We have to consider three cases :
\begin{enumerate}
\item If $\hgot=\hgot_{min}(W)$, then $\Phi$ and $\beta(\Phi)$ belong to $\langle R^{-\infty}(G/H_{min}(W))\rangle= \langle R^{-\infty}(G/H_{min}(V))\rangle$.
\item If $\hgot_{min}(W)\varsubsetneq \hgot\subset \ggot_\chi$, then $\wedge^\bullet\overline{V/V^\hgot}=\wedge^\bullet\overline{W/W^\hgot}$ and $\wedge^\bullet\overline{W/W^\hgot}\otimes \beta(\Phi)=\beta(\wedge^\bullet\overline{V/V^\hgot}\otimes \Phi)=0$.
\item If $\hgot\nsubseteq \ggot_\chi$, then $V/V^\hgot=W/W^\hgot\oplus\Cbb_\chi$. We get  then $\wedge^\bullet\overline{W/W^\hgot}\otimes \beta(\Phi)=\wedge^\bullet\overline{V/V^\hgot}\otimes \Phi=0$.
\end{enumerate}
We have proved that $\beta(\Phi)\in\dm_G(W)$.

Suppose now that $V^{gen}\cap W=\emptyset$, and let us check that $\beta$ is the zero map. Let 
$\hgot:=\hgot_{min}(W)\in\Delta_G(V)$.  We have $V/V^\hgot=\Cbb_\chi$ since $\hgot\nsubseteq\ggot_\chi$, and by definition we have 
$\beta(\Phi)=\wedge^\bullet\overline{\Cbb_\chi}\otimes \Phi=\wedge^\bullet\overline{V/V^\hgot}\otimes \Phi=0$ for any $\Phi\in\dm_G(V)$.
\end{proof}

Let $I_\dm: \dm_G(V)'\croc\dm_G(V)$ be the inclusion. If $V^{gen}\cap W\neq \emptyset$, we have the following commutative diagram
$$
\xymatrix@C=10mm{ 
0\ar[r] & \Ko_{G_\chi}(\T_{G_\chi}^*W^{gen,G_\chi}) \ar[d]\ar[r]^{\bj}& \Ko_G(\T_G^* V^{gen})\ar[d]\ar[r]^{\br}&
\Ko_G(\T_G^* W^{gen})\ar[d] \ar[r] & 0\\
0\ar[r] & \dm_{G_\chi}(W)' \ar[d]\ar[r]^{\indGa} &\dm_{G}(V)'\ar[d]^{I_\dm}\ar[r]^{\wedge^\bullet \overline{\Cbb_{\chi}}} &
\dm_{G}(W)' \ar[d]\ar[r] & 0\\
0\ar[r] & \dm_{G_\chi}(W)\ar[r]^{\indGa} &
\dm_{G}(V) \ar[r]^{\wedge^\bullet \overline{\Cbb_{\chi}}} &
\dm_{G}(W), & 
}
$$
and if $V^{gen}\cap W=\emptyset$, we have the other commutative diagram
$$
\xymatrix@C=10mm{ 
0\ar[r] & \Ko_{G_\chi}(\T_{G_\chi}^*W^{gen,G_\chi}) \ar[d]\ar[r]^{\bj}&\Ko_G(\T_G^* V^{gen})\ar[d] \ar[r] & 0\\
0\ar[r] & \dm_{G_\chi}(W)' \ar[d]\ar[r]^{\indGa}                                  & \dm_{G}(V)'\ar[d]^{I_\dm} \ar[r] & 0\\
0\ar[r]& \dm_{G_\chi}(W)\ar[r]^{\indGa} & \dm_{G}(V) \ar[r]&   0.
}
$$
In both diagrams,  all the horizontal sequences are exact, and except for $I_\dm$, we know that all the vertical arrows are isomorphisms. It is an easy exercise to check that in both cases $I_\dm$ must be an isomorphism.

\subsection{Decomposition of $\Ko_G(\T^*_G V)\simeq\dm_G(V)$}\label{sec:decomposition-V}

Let $V$ be a real $G$-module such that $V^\ggot=\{0\}$.  Let $J$ be an invariant complex structure 
on $V$. Let $\Wcal\subset \widehat{G}$ be the set of weights: $\chi\in\Wcal$ if 
$V_\eta:=\{v\in V \ \vert \ g\cdot v= \eta(g)v\}\neq \{0\}$. The differential of $\eta$ is denoted $i\bar{\eta}$ with $\bar{\eta}\in\ggot^*$.
Let $\overline{\Wcal}:=\{\bar{\eta}\ \vert\ \eta\in\Wcal\}$ : it is the set of infinitesimal weights for the action of $\ggot$ on $V$.


%

Let $\Delta_G(V)$ be the finite set formed by the infinitesimal stabilizer of points in $V$. For a subspace 
$\hgot\subset \ggot$, we see that $\hgot\in \Delta_G(V)$ if and only if $\hgot^\perp\subset \ggot^*$ is generated by 
$\hgot^\perp\cap \overline{\Wcal}$.

Any vector $v\in V$ decomposes as $v=\sum_{\eta} v_\eta$ with $v_\eta\in V_\eta$. The subalgebra $\ggot_v$ that stabilizes $v$ 
is equal to $\cap_{v_\eta\neq 0} \ker (\bar{\eta})=(\sum_{v_\eta\neq 0}\Rbb\bar{\eta})^\perp$. For a subspace $\hgot\subset \Delta_G(V)$, 
we see that the subspace $V^\hgot:=\{v\ \vert\  \hgot\subset \ggot_v\}$ is equal to $\oplus_{\bar{\eta}\in\hgot^\perp} V_\eta$ 
and $V_\hgot:=\{v\ \vert\ \hgot=\ggot_v\}$ is the subspace $(V^\hgot)^{gen}$ formed by the 
vectors $v:=\sum_{\bar{\eta}\in\hgot^\perp} v_\eta$ such that $\sum_{v_\eta\neq 0}\Rbb\bar{\eta}=\hgot^\perp$.

We have $V/V^\hgot\simeq\sum_{\bar{\eta}\notin\hgot^\perp} V_\eta$. Following Section \ref{sec:decomposition}, we consider a collection $\mathbf{\gamma}:=\{\gamma_\hgot\in\hgot, \hgot\in\Delta_G(V)\}$ such that $(V/V^\hgot)^{\gamma_\hgot}=\{0\}$. We look at the $H$-transversally elliptic symbol $\thom_{\gamma_\hgot}(V/V^\hgot)$ on $V/V^\hgot$. Since the action of $H$ is trivial on $V^\hgot$, 
the following map 
\begin{eqnarray*}
\Ko_G(\T_G^* (V^\hgot)^{gen})&\longrightarrow &\Ko_G(\T_G^* ((V^\hgot)^{gen}\times V/V^\hgot))\\
\sigma&\longrightarrow &\sigma\odot \thom_{\gamma_\rgot}(V/V^\hgot)
\end{eqnarray*}
is well defined. We can compose the previous map with the push-forward morphism $\Ko_G(\T_G^* ((V^\hgot)^{gen}\times V/V^\hgot))\to \Ko_G(\T_G^*V)$: let us denote $\bs^\hgot_\gamma$ the resulting map. 

We can now state Theorem \ref{theo:decomposition-K_G-T_G} in our linear setting.

\begin{theo}\label{theo:decomposition-V}
The map 
$$
\bs_\gamma:=\oplus_{\hgot} \bs^\hgot_\gamma : \bigoplus_{\hgot\in\Delta_G(V)}\Ko_G(\T_G^* (V^\hgot)^{gen})\longrightarrow\Ko_G(\T_G^*V)
$$
is an isomorphism of $R(G)$-modules.
\end{theo}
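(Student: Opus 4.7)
The plan is to deduce this from Theorem \ref{theo:decomposition-K_G-T_G} applied to $M=V$. Since $V$ is a real $G$-module, $\Delta_G(V)$ is finite, and by Lemma \ref{lem-gamma-hgot} one may choose $\gamma=(\gamma_\hgot)$ with $\gamma_\hgot\in\hgot$ and $V_\hgot=\{v\in V^{>\codim\hgot-1}:\gamma_\hgot\in\ggot_v\}$. Theorem \ref{theo:decomposition-K_G-T_G} then supplies an $R(G)$-linear isomorphism
$$
\Phi_\gamma:\bigoplus_{\hgot\in\Delta_G(V)}\Ko_G(\T_G^*V_\hgot)\longrightarrow\Ko_G(\T_G^*V),
$$
where $V_\hgot=(V^\hgot)^{gen}$. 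The entire content of the present theorem is then the identification $\Phi_\gamma=\bs_\gamma$.

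By the inductive construction of Section \ref{sec:decomposition}, the $\hgot$-component of $\Phi_\gamma$ is the composition of the section $\bs_\hgot:\Ko_G(\T_G^*V_\hgot)\to\Ko_G(\T_G^*V^{>\codim\hgot-1})$ of Section \ref{sec:restriction-Z-beta} with the iterated open push-forward up to $\Ko_G(\T_G^*V)$. In the linear case the $G$-invariant splitting $V=V^\hgot\oplus V/V^\hgot$ realizes $V_\hgot\times V/V^\hgot$ as a $G$-invariant open subset of $V$ via $(v,w)\mapsto v+w$; this subset is contained in $V^{>\codim\hgot-1}$ because the stabilizer $\hgot\cap\ggot_w$ of $v+w$ is contained in $\hgot$. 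It therefore provides a (large) tubular neighborhood of $V_\hgot$ in $V^{>\codim\hgot-1}$ whose normal bundle is the \emph{trivial} complex bundle $V_\hgot\times V/V^\hgot\to V_\hgot$. Applied to such a trivial bundle, the construction $\bs_{\gamma_\hgot,V_\hgot}$ of Section \ref{sec:S-beta} collapses to the exterior product $\sigma\mapsto\sigma\odot\thom_{\gamma_\hgot}(V/V^\hgot)$, and the remaining iterated open push-forward up to $V$ reduces, by functoriality of $j_*$ for open inclusions, to the single open push-forward along $V_\hgot\times V/V^\hgot\hookrightarrow V$. This composition is precisely $\bs^\hgot_\gamma$, so summing over $\hgot$ gives $\Phi_\gamma=\bs_\gamma$.

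The one bookkeeping point I expect to require care is matching the $G$-invariant complex structure on the fibers of the normal bundle: the construction of Section \ref{sec:S-beta} naturally uses $J_{\gamma_\hgot}$ (cf.\ Remark \ref{rem-J-beta}), whereas $\thom_{\gamma_\hgot}(V/V^\hgot)$ in the present statement is formed with the invariant complex structure inherited from $V/V^\ggot$. By Remark \ref{rem:thom-J-K} the two corresponding pushed Thom classes differ by an invertible element of $R(G)$, hence the two descriptions of the $\hgot$-component of $\bs_\gamma$ differ by an $R(G)$-module automorphism of $\Ko_G(\T_G^*V_\hgot)$ and are isomorphisms simultaneously. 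This is the main (though ultimately mild) obstacle in the argument; once it is handled the identification $\Phi_\gamma=\bs_\gamma$ is immediate and the theorem follows.
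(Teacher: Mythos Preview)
Your proposal is correct and follows exactly the paper's intended route: the paper offers no separate proof of this theorem, simply introducing it with ``We can now state Theorem \ref{theo:decomposition-K_G-T_G} in our linear setting,'' and you have supplied the bookkeeping that makes this specialization precise (trivial normal bundle, explicit tubular neighborhood $V_\hgot\times V/V^\hgot\hookrightarrow V^{>\codim\hgot-1}$, reduction of $\bs_{\gamma_\hgot,\Ncal}$ to the exterior product with $\thom_{\gamma_\hgot}(V/V^\hgot)$). Your handling of the complex-structure discrepancy via Remark \ref{rem:thom-J-K} is the right fix and is exactly the kind of detail the paper glosses over.
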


\medskip

Now we can translate the previous decomposition through the index map. For $\hgot\in\Delta_G(V)$, we consider the 
element $[\wedge^\bullet\overline{V/V^\hgot}]^{-1}_{\gamma_\hgot}\in R^{-\infty}(G)$ which is equal to the $G$-index of  
$\thom_{\gamma_\hgot}(V/V^\hgot)$ (see Definition \ref{def:wedge.inverse} and Proposition \ref{prop.index.Thom.beta.E}).

We need first the following

\begin{lem} The product by $[\wedge^\bullet\overline{V/V^\hgot}]^{-1}_{\gamma_\hgot}$ defines a map from 
$\dm_G(V^\hgot)$ into $\Fcal_G(V)$.
\end{lem}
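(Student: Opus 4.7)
Fix $\Phi\in\dm_G(V^\hgot)$ and set $\Psi:=[\wedge^\bullet\overline{V/V^\hgot}]^{-1}_{\gamma_\hgot}\otimes\Phi$. The first point to check is that $\Psi$ is well defined in $R^{-\infty}(G)$. The support of $[\wedge^\bullet\overline{V/V^\hgot}]^{-1}_{\gamma_\hgot}$ is contained in the cone $C$ generated by the $\gamma_\hgot$-polarized weights of $\overline{V/V^\hgot}$; each such weight $\alpha$ satisfies $\langle\bar\alpha,\gamma_\hgot\rangle>0$, so every $\nu=\sum n_i\alpha_i\in C$ obeys $\langle\bar\nu,\gamma_\hgot\rangle\geq c\sum n_i$ for some fixed $c>0$. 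As $\gamma_\hgot\in\hgot$, the pairing $\langle\bar{\,\cdot\,},\gamma_\hgot\rangle$ factors through $\pi_H$, so the fibers of $\pi_H\colon\widehat{G}\to\widehat{H}$ on $C$ are finite. Combined with the finiteness of $\pi_H(\supp\Phi)$ provided by $\Phi\in\langle R^{-\infty}(G/H)\rangle$, the standard convolution-finiteness argument gives $\Psi\in R^{-\infty}(G)$.

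To check that $\Psi\in\Fcal_G(V)$, fix $\rgot\in\Delta_G(V)$ and partition the infinitesimal weight set $\overline{\Wcal}\subset\ggot^*$ according to whether a weight lies in $\hgot^\perp\cap\rgot^\perp$, $\hgot^\perp\setminus\rgot^\perp$, $\rgot^\perp\setminus\hgot^\perp$ or in neither, inducing a $G$-decomposition $V=W_1\oplus W_2\oplus W_3\oplus W_4$. Then $V/V^\hgot=W_3\oplus W_4$ and $V/V^\rgot=W_2\oplus W_4$, and $\gamma_\hgot$ acts invertibly on both $W_3$ and $W_4$. In particular $\wedge^\bullet\overline{W_4}\otimes[\wedge^\bullet\overline{W_4}]^{-1}_{\gamma_\hgot}=1$, and multiplicativity of both $\wedge^\bullet\overline{\,\cdot\,}$ and $[\wedge^\bullet\overline{\,\cdot\,}]^{-1}_{\gamma_\hgot}$ under direct sums collapses the product to
$$
\wedge^\bullet\overline{V/V^\rgot}\otimes\Psi
\;=\;
\wedge^\bullet\overline{W_2}\otimes[\wedge^\bullet\overline{W_3}]^{-1}_{\gamma_\hgot}\otimes\Phi.
$$

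The conclusion now splits in two cases. If $W_2\ne 0$, define $\kgot\subset\ggot$ by $\kgot^\perp:=\mathrm{span}(\rgot^\perp\cap\hgot^\perp\cap\overline{\Wcal})$; since these generators are weights of $V^\hgot$, one has $\kgot\in\Delta_G(V^\hgot)$, $(V^\hgot)^\kgot=W_1$, hence $V^\hgot/(V^\hgot)^\kgot\simeq W_2$, and $\kgot\supsetneq\hgot$ precisely because $W_2\neq 0$. The Dahmen--Michelli defining relation for $\Phi\in\dm_G(V^\hgot)$ then forces $\wedge^\bullet\overline{W_2}\otimes\Phi=0$, killing the entire expression. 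If instead $W_2=0$, every weight of $V^\hgot$ lies in $\rgot^\perp$, so $\hgot^\perp\subseteq\rgot^\perp$ (as $\hgot^\perp$ is spanned by the weights of $V^\hgot$), whence $\rgot\subseteq\hgot$ and $R:=\exp(\rgot)\subseteq H$. Now $[\wedge^\bullet\overline{W_3}]^{-1}_{\gamma_\hgot}$ has support in characters of $G$ trivial on $R$ (its weights lie in $\rgot^\perp$), while $\Phi\in\langle R^{-\infty}(G/H)\rangle\subseteq\langle R^{-\infty}(G/R)\rangle$, so the product lies in $\langle R^{-\infty}(G/R)\rangle$. The main subtlety is extracting the correct auxiliary stabilizer $\kgot\in\Delta_G(V^\hgot)$ and checking that $\kgot\supsetneq\hgot\Longleftrightarrow W_2\neq 0$; the remainder is bookkeeping with weights and direct sums.
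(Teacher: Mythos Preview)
Your proof is correct and follows essentially the same route as the paper's. Your weight decomposition $V=W_1\oplus W_2\oplus W_3\oplus W_4$ is just a relabeling of the paper's subspaces $V^{\hgot+\rgot}$, $V^\hgot/V^{\hgot+\rgot}$, $V^\rgot/V^{\hgot+\rgot}$, $V/(V^\hgot+V^\rgot)$, and your collapsed identity $\wedge^\bullet\overline{V/V^\rgot}\otimes\Psi=\wedge^\bullet\overline{W_2}\otimes[\wedge^\bullet\overline{W_3}]^{-1}_{\gamma_\hgot}\otimes\Phi$ is exactly the paper's ``mother formula''; the dichotomy $W_2\neq 0$ versus $W_2=0$ coincides with the paper's $\rgot\nsubseteq\hgot$ versus $\rgot\subseteq\hgot$. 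If anything, your explicit construction of the auxiliary stabilizer $\kgot\in\Delta_G(V^\hgot)$ with $(V^\hgot)^\kgot=W_1$ is a bit more careful than the paper, which applies the $\dm_G(V^\hgot)$ condition directly to $\hgot+\rgot$ without checking that this subalgebra (or rather the generic stabilizer in $V^{\hgot+\rgot}$) lies in $\Delta_G(V^\hgot)$.
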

\begin{proof}  Since the symbol $\thom_{\gamma_\hgot}(V/V^\hgot)$ is $H$-transversally elliptic, the projection $\pi_\hgot: \ggot^*\to\hgot^*$ is proper when restricted to the infinitesimal support  $\overline{\supp(\Omega)}$ of $\Omega:=[\wedge^\bullet\overline{V/V^\hgot}]^{-1}_{\gamma_\hgot}$. Let $\Phi\in\langle R^{-\infty}(G/H)\rangle$: the image of 
$\overline{\supp(\Phi)}$ by $\pi_\hgot$ is finite. It is now easy to check that for any $\chi \in \widehat{G}$ the set 
$\{(\chi_1,\chi_2)\in \supp(\Omega)\times \supp(\Phi)\ \vert \chi_1+\chi_2=\chi \}$ is finite: the product $[\wedge^\bullet\overline{V/V^\hgot}]^{-1}_{\gamma_\hgot}\otimes\Phi$ is 
well-defined.

Let $\Phi\in\langle R^{-\infty}(G/H)\rangle$. For any $\agot\in\Delta_G(V)$ we have the `mother" formula\footnote{See formula (2) in 
\cite{CPV-2010-1}.} 
\begin{equation}\label{eq:mother-formula}
\wedge^\bullet\overline{V/V^\agot}\otimes[\wedge^\bullet\overline{V/V^\hgot}]^{-1}_{\gamma_\hgot}\otimes\Phi=
\wedge^\bullet\overline{V^\hgot/V^{\hgot+\agot}}
\otimes[\wedge^\bullet\overline{V^\agot/V^{\hgot+\agot}}]^{-1}_{\gamma_\hgot}\otimes\Phi
\end{equation} 
which is due to the isomorphisms $V/V^{\agot}\simeq V/(V^\hgot+V^{\agot})\oplus V^\hgot/V^{\hgot+\agot}$, 
$V/V^{\hgot}\simeq$ \break $V/(V^\hgot+V^{\agot})\oplus V^\agot/V^{\hgot+\agot}$, and the relation 
$$
\wedge^\bullet W\otimes[\wedge^\bullet W]^{-1}_{\gamma}=1
$$
that holds for any $G$-module such that $W^\gamma=\{0\}$.

Note that for any $\agot,\hgot\in\Delta_G(V)$ we have the equivalence $V^{\hgot+\agot}=V^\hgot \Longleftrightarrow \agot\subset \hgot$. 
Suppose now that $\Phi\in\dm_G(V^\hgot)$ and consider the product  $\Omega:=[\wedge^\bullet\overline{V/V^\hgot}]^{-1}_{\gamma_\hgot}\otimes\Phi\in R^{-\infty}(G)$. If $\agot\subset \hgot$, we have 
$$
\wedge^\bullet\overline{V/V^\agot}\otimes\Omega=
[\wedge^\bullet\overline{V^\agot/V^{\hgot}}]^{-1}_{\gamma_\hgot}\otimes\Phi \in \langle R^{-\infty}(G/A)\rangle
$$
since $[\wedge^\bullet\overline{V^\agot/V^{\hgot}}]^{-1}_{\gamma_\hgot}\in \langle R^{-\infty}(G/A)\rangle$ and 
$\Phi\in \langle R^{-\infty}(G/H)\rangle \subset \langle R^{-\infty}(G/A)\rangle$. In the other hand, if $\agot\nsubseteq \hgot$, we have $\wedge^\bullet\overline{V/V^\agot}\otimes\Omega=0$ since $\wedge^\bullet\overline{V^\hgot/V^{\hgot+\agot}}\otimes \Phi=0$. 

We have proved that $\Omega=[\wedge^\bullet\overline{V/V^\hgot}]^{-1}_{\gamma_\hgot}\otimes\Phi$ belongs to $\Fcal_G(V)$.
\end{proof}

\medskip

The map 
$$
\mathbf{\Scal}_\gamma : \bigoplus_{\hgot\in\Delta_G(V)}\dm_G(V^\hgot)\longrightarrow \Fcal_G(V)
$$
defined by $\mathbf{\Scal}_\gamma(\oplus_\hgot \Phi_\hgot):=\sum_{\hgot\in\Delta_G(V)}
 [\wedge^\bullet\overline{V/V^\hgot}]^{-1}_{\gamma_\hgot}\otimes \Phi_\hgot$
satisfies the following commutative diagram
$$
\xymatrix@C=30mm{ 
\bigoplus_{\hgot}\Ko_G(\T^*_G(V^\hgot)^{gen})\ar[r]^{\bs_\gamma} \ar[d]^{\oplus_\hgot \indice^G_{V^\hgot}} & 
\Ko_{G}(\T_{G}^*V) \ar[d]^{\indice^G_V} \\
\bigoplus_{\hgot}\dm_G(V^\hgot)\ar[r]^{\mathbf{\Scal}_\gamma} & \Fcal_G(V).
}
$$

Since $\bs_\gamma$ and the index maps $\indice^G_{V^\hgot},\indice^G_{V}$ are isomorphisms we recover the following theorem of de Concini-Procesi-Vergne \cite{CPV-2010-1}.

\begin{theo}\label{theo:decomposition-dm-V}
The map $\mathbf{\Scal}_\gamma$ is an isomorphism of $R(G)$-modules.
\end{theo}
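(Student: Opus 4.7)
The plan is to deduce Theorem \ref{theo:decomposition-dm-V} from the isomorphism $\bs_\gamma$ of Theorem \ref{theo:decomposition-V} together with the identifications of the $K$-theory groups via the index map, by establishing the commutativity of the diagram displayed just above the theorem statement. Three of its four arrows are already known to be isomorphisms: the top arrow $\bs_\gamma$ by Theorem \ref{theo:decomposition-V}; the right vertical arrow $\indice^G_V$ by Theorem \ref{theo:generateur}(e) (proved in Section \ref{sec:proof-F-CPV}); and the left vertical arrow $\oplus_\hgot \indice^G_{V^\hgot}$ by applying Theorem \ref{theo:generateur}(f) to each $G$-module $V^\hgot$. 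Hence, once the diagram is shown to commute, the five lemma forces $\mathbf{\Scal}_\gamma$ to be an isomorphism as well.

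The heart of the argument is therefore the commutativity identity
\begin{equation*}
\indice^G_V\bigl(\bs^\hgot_\gamma(\sigma)\bigr) \;=\; [\wedge^\bullet\overline{V/V^\hgot}]^{-1}_{\gamma_\hgot}\otimes\indice^G_{V^\hgot}(\sigma),
\qquad \sigma\in \Ko_G(\T^*_G(V^\hgot)^{gen}).
\end{equation*}
By construction, $\bs^\hgot_\gamma(\sigma)$ is the push-forward via the open inclusion $(V^\hgot)^{gen}\times V/V^\hgot \croc V$ of the exterior product $\sigma\odot\thom_{\gamma_\hgot}(V/V^\hgot)$. The excision property of the index (Section \ref{sec:index}) collapses the push-forward on the index side. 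To compute the remaining index, I would invoke Theorem \ref{theo:multiplicative-property-abelian} with $M_1=(V^\hgot)^{gen}$, $M_2=V/V^\hgot$, $\agot_1$ any rational complement of $\hgot$ in $\ggot$, and $\agot_2=\Rbb\gamma_\hgot\subset\hgot$: the hypotheses $\agot_1\cap\agot_2=\{0\}$ and "$\agot_1$ acts trivially on $M_2$" need to be checked, but they follow from the fact that $H$ acts trivially on $V^\hgot$ and $\gamma_\hgot\in\hgot$ acts bijectively on $V/V^\hgot$. This reduces the index to the product
\begin{equation*}
\indice^G_{V^\hgot}(\sigma)\cdot\indice^G_{V/V^\hgot}\bigl(\thom_{\gamma_\hgot}(V/V^\hgot)\bigr),
\end{equation*}
and the second factor equals $[\wedge^\bullet\overline{V/V^\hgot}]^{-1}_{\gamma_\hgot}$ by Proposition \ref{prop.index.Thom.beta.E} together with Definition \ref{def:wedge.inverse}.

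The step I expect to require the most care is the application of the multiplicative property, since $\sigma$ only lives over the open subset $(V^\hgot)^{gen}$ rather than over all of $V^\hgot$, and the two transversal ellipticity conditions involve different subalgebras of $\ggot$. Concretely, one should first note that the $\ggot$-transversally elliptic symbol $\sigma$ on $(V^\hgot)^{gen}$ (where $H$ acts trivially) can equivalently be viewed as $\agot_1$-transversally elliptic, with $\agot_1$ a chosen complement to $\hgot$; then the product with the $\Rbb\gamma_\hgot$-transversally elliptic Thom symbol lands in $\Ko_G(\T^*_{\agot_1\oplus\Rbb\gamma_\hgot}(\cdots))$, which is included in $\Ko_G(\T^*_G(\cdots))$, and the product formula of Theorem \ref{theo:multiplicative-property-abelian} applies after verifying that the supports interact well (the infinitesimal support of $\indice^G_{V^\hgot}(\sigma)$ is proper over $\agot_1^*$ and that of $\indice^G_{V/V^\hgot}(\thom_{\gamma_\hgot})$ is contained in a cone on which $\pi_{\agot_2}$ is proper). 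Once this is in place, the diagram commutes, and the theorem follows immediately.
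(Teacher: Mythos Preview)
Your overall strategy is exactly the paper's: the commutativity of the square together with the three known isomorphisms forces $\mathbf{\Scal}_\gamma$ to be an isomorphism. In the paper this commutativity is not re-proved in Section~\ref{sec:decomposition-V}; it is inherited from Theorem~\ref{theo:decomposition-K_G-T_G} (itself a consequence of the index computation in Theorem~\ref{theo.S.beta}), so the paper simply asserts the diagram commutes and concludes. Your direct verification in the linear case via Theorem~\ref{theo:multiplicative-property-abelian} is a legitimate shortcut of that same argument.

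There is, however, a genuine slip in how you invoke Theorem~\ref{theo:multiplicative-property-abelian}. With your assignment $M_1=(V^\hgot)^{gen}$, $M_2=V/V^\hgot$, $\agot_1=$ a complement of $\hgot$, $\agot_2=\Rbb\gamma_\hgot$, the required hypothesis is that $\agot_1$ act trivially on $M_2=V/V^\hgot$. This is \emph{false} in general: a complement of $\hgot$ has no reason to fix $V/V^\hgot$ (take $G=\Tbb^2$, $V=\Cbb_{\chi_1}\oplus\Cbb_{\chi_2}$, $\hgot=\ker\bar\chi_1$). The fact you cite, ``$H$ acts trivially on $V^\hgot$'', concerns the \emph{other} pair and does not help here. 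The fix is to swap the roles: take $M_1=V/V^\hgot$ with $\agot_1=\Rbb\gamma_\hgot\subset\hgot$, and $M_2=(V^\hgot)^{gen}$ with $\agot_2$ a rational complement of $\hgot$. Then $\agot_1\subset\hgot$ acts trivially on $M_2=V^\hgot$ by definition of $V^\hgot$, $\agot_1\cap\agot_2=\{0\}$ is clear, the symbol $\thom_{\gamma_\hgot}(V/V^\hgot)$ is $\agot_1$-transversally elliptic, and $\sigma$ is $\agot_2$-transversally elliptic since $\T^*_G(V^\hgot)^{gen}=\T^*_{\agot_2}(V^\hgot)^{gen}$. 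With this correction your computation goes through and yields the desired identity
\[
\indice^G_V\bigl(\bs^\hgot_\gamma(\sigma)\bigr)=\indice^G_{V/V^\hgot}\bigl(\thom_{\gamma_\hgot}(V/V^\hgot)\bigr)\cdot\indice^G_{V^\hgot}(\sigma)
=[\wedge^\bullet\overline{V/V^\hgot}]^{-1}_{\gamma_\hgot}\otimes\indice^G_{V^\hgot}(\sigma).
\]
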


\section{Appendix}

\subsection{Appendix A}
Let $G$ be a compact abelian Lie group, and let $\chi : G\to U(1)$ be a surjective morphism. We want to prove that the sequence 
\begin{equation}\label{eq:sequence-G-chi-appendix}
0\longrightarrow R^{-\infty}(G_\chi)\stackrel{\indGa}{\longrightarrow}
R^{-\infty}(G)\stackrel{\wedge^\bullet \overline{\Cbb_{\chi}}}{\longrightarrow}
R^{-\infty}(G)
\end{equation}
is exact. Note that the induction map $\indGa: R^{-\infty}(G_\chi)\to R^{-\infty}(G)$ is the dual of the restriction morphism 
$R(G)\to R(G_\chi)$. Hence the injectivity of $\indGa$ will follows from the classical 
\begin{lem}\label{lem-appendix-1}
Let $H$ be a closed subgroup of  a compact abelian Lie group $G$.  The restriction $R(G)\to R(H)$ is onto. 
\end{lem}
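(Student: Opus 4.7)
\medskip

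The plan is to reduce the statement to the surjectivity of the restriction map $\pi_H : \widehat{G} \to \widehat{H}$ of character groups, and then to establish that surjectivity by Pontryagin duality. Since $G$ is abelian, every irreducible representation is one-dimensional, so the representation ring $R(G)$ is the group ring $\mathbb{Z}[\widehat{G}]$, and similarly $R(H) = \mathbb{Z}[\widehat{H}]$. The restriction map $R(G) \to R(H)$ is the $\mathbb{Z}$-linear extension of $\pi_H$, so it is surjective if and only if $\pi_H$ is.

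Next I would dualize the short exact sequence of compact abelian Lie groups
\begin{equation*}
1 \longrightarrow H \longrightarrow G \longrightarrow G/H \longrightarrow 1 .
\end{equation*}
Pontryagin duality is an exact contravariant functor on the category of locally compact abelian groups, and sends compact groups to discrete ones. Applying it to the sequence above yields the exact sequence of discrete abelian groups
\begin{equation*}
0 \longrightarrow \widehat{G/H} \longrightarrow \widehat{G} \xrightarrow{\ \pi_H\ } \widehat{H} \longrightarrow 0 ,
\end{equation*}
where $\widehat{G/H}$ is identified with the subgroup of characters of $G$ that are trivial on $H$. The right exactness is exactly the claim that every continuous character $\chi : H \to U(1)$ extends to a continuous character of $G$.

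The main obstacle is simply to quote (or, if one prefers, to verify) the exactness of Pontryagin duality on LCA groups. If one wants to avoid that machinery, an elementary alternative is to use the structure theorem for compact abelian Lie groups: write $G \simeq T^n \times F$ with $F$ finite abelian, reduce to the torus case, and extend a character of a closed subgroup of $T^n$ by lifting its differential to $\mathfrak{g}^*$ along a lattice-respecting splitting, then handling the finite-group factor separately using divisibility of $U(1)$. Either way, once $\pi_H$ is known to be surjective, passing to group rings gives the surjectivity of $R(G) \to R(H)$ immediately.
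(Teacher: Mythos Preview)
Your argument is correct. Reducing to the surjectivity of $\pi_H:\widehat{G}\to\widehat{H}$ and then invoking the exactness of Pontryagin duality (or the structure theorem for compact abelian Lie groups together with the divisibility of $U(1)$) is a standard and entirely valid way to prove the lemma.

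The paper, however, takes a different and more self-contained route. Rather than quoting duality as a black box, it gives a direct Fourier-analytic construction of an extending character: given $\theta\in\widehat{H}$, one averages an $L^1$-function $\phi$ on $G$ against $\theta^{-1}$ over $H$ to obtain a nonzero function $\tilde\phi$ satisfying $\tilde\phi(gh)=\tilde\phi(g)\theta(h)$, and then observes that any $\chi\in\widehat{G}$ with nonvanishing Fourier coefficient $(\tilde\phi,\chi)$ must restrict to $\theta$ on $H$; such a $\chi$ exists by Peter--Weyl density. Your approach is cleaner and more conceptual, and immediately generalizes to arbitrary compact abelian groups, while the paper's argument has the virtue of being elementary and not relying on the exactness of the duality functor as an external input --- in effect it reproves that exactness in the case at hand. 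Both arrive at the same conclusion, and your reduction to $\Zbb[\widehat{G}]\to\Zbb[\widehat{H}]$ makes the passage from characters to representation rings explicit, which the paper leaves implicit.
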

\begin{proof}
Let $\theta$ be a character of $H$. For any $L^1$-function $\phi: G\to \Cbb$, we consider the average $\tilde{\phi}(g)=\int_H\phi(gh)\theta(h)^{-1}dh$~:  we have then 
\begin{equation}\label{eq:phi-gh}
\tilde{\phi}(gh)=\tilde{\phi}(g)\theta(h)\quad \mathrm{for\ any} \quad (g,h)\in G\times H.
\end{equation}
Let us choose $\phi$ such that $\tilde{\phi}\neq 0$. For any character $\chi: G\to\Cbb$, we consider the function
$$
\tilde{\phi}_\chi(t):=\int_G \tilde{\phi}(tg)\chi(g)^{-1}dg.
$$
We have $\tilde{\phi}_\chi= (\tilde{\phi},\chi)\chi$ where $(\tilde{\phi},\chi)=\int_G \tilde{\phi}(g)\chi(g)^{-1}dg\in \Cbb$. It is immediate that  (\ref{eq:phi-gh}) gives that 
$\tilde{\phi}_\chi(h)=(\tilde{\phi},\chi)\theta(h)$ for $h\in H$. Hence the restriction of $\chi$ to $H$ is equal to $\theta$ when $(\tilde{\phi},\chi)\neq 0$. By a density argument, we know that such $\chi$ exists.
\end{proof}

\medskip

Now we want to prove that ${\rm Image}(\indGa)=\ker (\wedge^\bullet \overline{\Cbb_{\chi}})$. The inclusion \break ${\rm Image}(\indGa)\subset\ker (\wedge^\bullet \overline{\Cbb_{\chi}})$ comes from the fact that $\wedge^\bullet \overline{\Cbb_{\chi}}=0$ in $R(G_\chi)$.

For the other inclusion, we consider $\Phi:=\sum_{\mu\in\widehat{G}}m(\mu)\Cbb_\mu\in \ker (\wedge^\bullet \overline{\Cbb_{\chi}})$. We have the relation $\Phi\otimes \Cbb_\chi=\Phi$,  which means that $m(\mu+\chi)=m(\mu)$ for all $\mu\in\widehat{G}$. Let $\pi: \widehat{G}\to \widehat{G_\chi}$ be the restriction morphism. Thanks to Lemma \ref{lem-appendix-1}, we know that $\pi$ is surjective, and we see that for $\theta\in \widehat{G_\chi}$, $\pi^{-1}(\theta)$ is of the form $\{k\chi +\theta', k\in\Zbb\}$. For $\theta\in \widehat{G_\chi}$, we denote $n(\theta)\in\Zbb$ the integer $m(\mu)$ for $\mu\in\pi^{-1}(\theta)$. We have then
\begin{eqnarray*}
\Phi=\sum_{\mu\in\widehat{G}}m(\mu)\Cbb_\mu
=\sum_{\theta\in\widehat{G_\chi}}\ \sum_{\mu\in\pi^{-1}(\theta)}m(\mu)\Cbb_\mu
&=&\sum_{\theta\in\widehat{G_\chi}} n_\theta\sum_{k\in\Zbb}\Cbb_{k\chi+\theta'}\\
&=&\indGa\left(\sum_{\theta\in\widehat{G_\chi}} n_\theta \Cbb_\theta\right).
\end{eqnarray*}

\subsection{Appendix B}

This section is devoted to the proof of Proposition \ref{prop:thom-pm}. Let $V$ be equipped with the complex structure $J:=J_\beta$. The class $\thom_{\pm\beta}(V)\in \Ko_G(\T_G^* V)$ are represented by the symbols $\Clif(\xi\pm\beta(x)):\wedge^+V\longrightarrow \wedge^- V$. Since $-\thom_{\pm\beta}(V)$ is represented by 
$-\Clif(\xi+\beta(x)):\wedge^-V\longrightarrow \wedge^+ V$, the class $\thom_{-\beta}(V)-\thom_{\beta}(V)$ is represented by the symbol
$$
\tau(x,\xi):\wedge^\bullet V \to \wedge^\bullet V
$$ 
defined by $\tau(x,\xi)=\Clif(\xi)\circ \epsilon -\Clif(\beta(x))$, where $\epsilon(w)=(-1)^{|w|}w$. We consider the family 
$\tau_s(x,\xi)=(s {\rm Id} +\Clif(\xi))\circ \epsilon -\Clif(\beta^s(x)),\quad s\in [0,1]$, where $\beta^s= s J + (1-s) \beta$. 
Note that $\beta^s$ is invertible for any $s\in [0,1]$.

\begin{lem}\label{lem:tau-s}
The family $\tau_s, s\in [0,1]$ is an homotopy of transversally elliptic symbols.
\end{lem}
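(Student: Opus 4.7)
The family $\tau_s$ depends continuously on $(s,x,\xi)$ by inspection of the formula, so the content of the lemma is that $\mathrm{Support}(\tau_s)\cap \T_G^*V$ is compact, uniformly in $s\in[0,1]$. I would attack this by a fiberwise analysis of $\ker \tau_s(x,\xi)$ on $\wedge^\bullet V$. Decomposing $\omega=\omega^++\omega^-\in \wedge^\bullet V$ by parity and separating the even and odd components of $\tau_s(x,\xi)\omega=0$, the equation is equivalent to
\begin{align*}
s\,\omega^+ &= \Clif(\xi+\beta^s(x))\,\omega^-,\\
s\,\omega^- &= \Clif(\xi-\beta^s(x))\,\omega^+.
\end{align*}
At $s=0$ these force $\xi=\pm\beta(x)$, which together with the transversality condition $(\xi,\beta(x))=0$ and the hypothesis $V^\beta=\{0\}$ yields $(x,\xi)=(0,0)$.

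For $s>0$ both $\omega^\pm$ must be nonzero. The conformality identity $\Clif(v)^*\Clif(v)=\|v\|^2\,\Id$ applied to each equation gives, after multiplying, $\|\xi+\beta^s(x)\|\,\|\xi-\beta^s(x)\|=s^2$. Next, substituting the second equation into the first produces $\Clif(\xi+\beta^s(x))\Clif(\xi-\beta^s(x))\omega^+=s^2\omega^+$; expanding the product as $\|\beta^s(x)\|^2-\|\xi\|^2+[\Clif(\beta^s(x)),\Clif(\xi)]$ and pairing with $\omega^+$, the commutator term contributes zero since $[\Clif(a),\Clif(b)]$ is skew-self-adjoint on the real Euclidean space $\wedge^\bullet V$. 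This yields the scalar identity $\|\beta^s(x)\|^2-\|\xi\|^2=s^2$.

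Combining the two identities, $(\xi+\beta^s(x),\xi-\beta^s(x))=\|\xi\|^2-\|\beta^s(x)\|^2=-s^2=-\|\xi+\beta^s(x)\|\,\|\xi-\beta^s(x)\|$, so Cauchy--Schwarz equality forces $\xi=t\,\beta^s(x)$ for some real $t\in(-1,1)$. The condition $(\xi,\beta(x))=0$ then reads $t\,(\beta^s(x),\beta(x))=0$. Using the weight-space decomposition $V=\bigoplus_\alpha V_\alpha$ with $c_\alpha=\langle\alpha,\beta\rangle\neq 0$, one has $\beta=|c_\alpha|\,J_\beta$ and $\beta^s=(s+(1-s)|c_\alpha|)\,J_\beta$ on each $V_\alpha$, whence
$$
(\beta^s(x),\beta(x))=\sum_\alpha |c_\alpha|\bigl(s+(1-s)|c_\alpha|\bigr)\|x_\alpha\|^2,
$$
which is strictly positive for $x\neq 0$ and every $s\in[0,1]$. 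Therefore $t=0$, hence $\xi=0$ and $\|\beta^s(x)\|=s$, so $\mathrm{Support}(\tau_s)\cap\T_G^*V$ is contained in the sphere $\{(x,0):\|\beta^s(x)\|=s\}$. This is compact for each $s$, and uniformly in $s\in[0,1]$ since $\beta^s$ is uniformly bounded below in operator norm on the compact interval.

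I expect the only subtlety to be the positivity of $(\beta^s(x),\beta(x))$; it is not abstractly obvious from $V^\beta=\{0\}$ and requires unpacking the definition of $J_\beta$ on weight spaces. Everything else is a streamlined Clifford-algebra calculation, and the argument has the pleasant feature of showing at once that the support is just a (possibly empty) sphere $\|\beta^s(x)\|=s$ in the zero section.
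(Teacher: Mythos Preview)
Your proof is correct and reaches exactly the same description of the support as the paper: $\mathrm{Support}(\tau_s)\cap\T_G^*V=\{(x,0):\|\beta^s(x)\|=s\}$. The route, however, is organized differently. The paper computes the $2\times 2$ block matrix $(\tau_s)^*\tau_s$ with respect to the parity grading, reads off a determinantal condition
\[
(s^2+\|\xi\|^2+\|\beta^s(x)\|^2)^2 = 4s^2\|\beta^s(x)\|^2 + 4(\xi,\beta^s(x))^2,
\]
and then uses the \emph{strict} Cauchy--Schwarz inequality to force $\xi\in\Rbb\,\beta^s(x)$ and $s^2+\|\xi\|^2=\|\beta^s(x)\|^2$. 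You instead extract the same two scalar constraints directly from the kernel equations via the Clifford identities $\Clif(v)^*\Clif(v)=\|v\|^2\,\Id$ and skew-adjointness of $[\Clif(u),\Clif(v)]$, and then invoke the \emph{equality case} of Cauchy--Schwarz. The endgame (transversality plus the positivity $(\beta^s(x),\beta(x))>0$, which the paper states in a footnote and you unpack on weight spaces) is identical.

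One small remark: since $V$ carries the complex structure $J_\beta$, the natural inner product on $\wedge^\bullet V$ is Hermitian rather than real Euclidean, so skew-adjointness of the commutator only gives that $\langle [\Clif(\beta^s(x)),\Clif(\xi)]\omega^+,\omega^+\rangle$ is purely imaginary. Your conclusion survives unchanged, because the other terms in the paired equation $s^2\|\omega^+\|^2=(\|\beta^s(x)\|^2-\|\xi\|^2)\|\omega^+\|^2+\langle[\cdots]\omega^+,\omega^+\rangle$ are real, forcing the imaginary contribution to vanish; but the justification should be phrased this way rather than by appealing to a real structure.
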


Thanks to the last lemma, we know that $\tau=\tau_1$ in $\Ko_G(\T_G^* V)$. Since 
$\mathrm{Support}(\tau_1)\cap \T_G^* V\subset \T_G^*( V\setminus \{0\})$, the restriction 
$\tau':=\tau_1\vert_{V\setminus \{0\}}$ is a $G$-transversally elliptic symbol on $V\setminus \{0\}$, and 
the excision property tells us that $j_!(\tau')=\tau_1=\tau$ in $\Ko_G(\T_G^* V)$.

\medskip

For $(x,\xi)\in \T^*(V\setminus \{0\})$, the map 
$\tau'(x,\xi):\wedge^\bullet V\to\wedge^\bullet V$ is given by 
$$
\tau'(x,\xi)=({\rm Id}+\Clif(\xi))\circ \epsilon -\Clif(J x).
$$

Let $S$ be the sphere of radius one of $V$. We work with the isomorphism 
$S\times\Rbb\simeq V\setminus \{0\}, (y,t)\mapsto e^t y$. Let $\cu=S\times\Rbb\times \Cbb$ be the trivial complex vector bundle. Let $\Hcal\to S\times\Rbb$ be the 
vector bundle defined by $\Hcal_{(y,t)}:=(\Cbb y)^\perp\subset \T_y S$. We use the isomorphism of vector bundle
$$
\phi:\Hcal\oplus\cu\longrightarrow \T(S\times \Rbb)
$$
defined by $\phi_{(y,t)}(\xi'\oplus a+ib)=(\xi'+ b J(y),a)\in \T_y S\times \T_t\Rbb$. Through $\phi$ 
the bundle map $\Clif(\xi):\wedge^+ V\to \wedge^- V$ for $\xi\in\T_x V$ becomes
$$
\Clif_{(y,t)}(\xi'\oplus z):(\wedge \Hcal_y\otimes\wedge\Cbb)^+\to (\wedge \Hcal_y\otimes\wedge\Cbb)^-
$$

Through $\phi$, the vector field $x\mapsto J_\cgot x$ becomes the section of $\cu$ given by $(y,t)\mapsto e^t i$, and the morphism $\tau'$ is defined as follows: for $(y,t)\in S\times \Rbb$, and $\xi'\oplus z\in \Hcal_y\oplus \Cbb$, the map $\tau'_{(y,t)}(\xi'\oplus z):\wedge \Hcal_y\otimes\wedge\Cbb\to \wedge \Hcal_y\otimes\wedge\Cbb$ 
is defined by 
$$
\tau'_{(y,t)}(\xi'\oplus z)=({\rm Id}+\Clif(\xi'\oplus z))\circ \epsilon -e^t\Clif(i).
$$

Let $A_{z,\xi'}=\Clif(\xi'\oplus z)$ and $B=\Clif(i)$ be the maps
from $(\wedge \Hcal_y\otimes\wedge\cu)^+$ into  $(\wedge \Hcal_y\otimes\wedge\cu)^-$. The matrix of 
$\tau'_{(y,t)}(\xi'\oplus z)$ relatively to the grading of $\wedge \Hcal_y\otimes\wedge\Cbb$ is 
$$
\left(\begin{array}{cc} 
{\rm Id}& A_{z,\xi'}^*+ e^t B^* \\ 
A_{z,\xi'}-e^t B& -{\rm Id}
\end{array}\right).
$$

Let us consider the deformation of $\tau'$ in a family 
$$
\sigma_s:=\left(\begin{array}{cc} 
{\rm Id} & sA_{z,\xi'}^*+ e^t B^* \\ 
A_{z,\xi'}-e^t B& -{\rm Id}
\end{array}\right),\ s\in[0,1].
$$

\begin{lem}\label{lem:sigma-s}
The family $\sigma_s, s\in [0,1]$ is an homotopy of transversally elliptic symbols.
\end{lem}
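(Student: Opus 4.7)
My plan is to compute the singular support of $\sigma_s$ on $\T^*_G(S\times\Rbb)$ exactly and check it is compact and independent of $s\in[0,1]$; continuity of the family in $s$ is immediate from the explicit formula. The key step is a Schur-complement reduction: since the $(+,+)$-entry of $\sigma_s$ is $\Id$ and the $(-,-)$-entry is $-\Id$, $\sigma_s$ fails invertibility at a point $(y,t,\xi',z)$ iff
$$Q_s := -\Id - (A_{z,\xi'} - e^tB)(sA_{z,\xi'}^* + e^tB^*)$$
fails invertibility on $(\wedge\Hcal_y\otimes\wedge\Cbb)^-$.

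Expanding via the Clifford relations $A_{z,\xi'}^* = -A_{z,\xi'}$, $B^* = -B$, $A_{z,\xi'}^2 = -(\|\xi'\|^2+|z|^2)\,\Id$, $B^2 = -\Id$, and the anticommutator $\{A_{z,\xi'},B\} = -2\,\Im(z)\,\Id$ (from the real pairing $\langle z,i\rangle_\Rbb = \Im z$), one obtains
$$Q_s = \bigl[e^{2t} - 1 - s(\|\xi'\|^2+|z|^2) - 2e^t\,\Im(z)\bigr]\Id - (s+1)e^t\,B A_{z,\xi'}.$$

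Next I intersect the singular locus with $\T^*_G(S\times\Rbb)$. Through the parametrization $\phi$ the Killing vector generated by $\beta$ at $(y,t)$ corresponds to the direction $0\oplus i\in\Hcal_y\oplus\Cbb$, and pairing a cotangent vector with it recovers $\Im z$; hence $\T^*_G(S\times\Rbb)$ lies inside the locus $\Im z=0$. On this locus $B A_{z,\xi'}$ is skew-adjoint with $(B A_{z,\xi'})^2 = -(\|\xi'\|^2+z^2)\Id$, so the spectrum of $Q_s$ is
$$\bigl\{\,e^{2t} - 1 - s(\|\xi'\|^2+z^2) \;\pm\; i(s+1)e^t\sqrt{\|\xi'\|^2+z^2}\,\bigr\}.$$
Since $(s+1)e^t > 0$ for every $s\in[0,1]$ and every $t\in\Rbb$, the imaginary part vanishes iff $\xi' = 0$ and $z = 0$; the real part then reduces to $e^{2t}-1$, which vanishes iff $t = 0$.

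Consequently the singular support of $\sigma_s$ on $\T^*_G(S\times\Rbb)$ is precisely the compact slice $S\times\{(0,0,0)\}$ of the zero section, independent of $s\in[0,1]$, so $(\sigma_s)_{s\in[0,1]}$ is a homotopy of $G$-transversally elliptic symbols. The main obstacle is the Clifford-algebra bookkeeping in the first two paragraphs: tracking the sign conventions for $\Clif(v)^*$ and verifying that the cross terms collapse under $\Im z = 0$ so that $BA_{z,\xi'}$ has the stated skew-adjointness and square. Once those identities are in place, the spectral analysis and the uniform compactness are routine.
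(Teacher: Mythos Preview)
Your argument is correct and reaches the same conclusion as the paper --- that $\mathrm{Support}(\sigma_s)\cap\T^*_G(S\times\Rbb)$ is the compact set $\{t=0,\ \xi'=0,\ z=0\}$ for every $s\in[0,1]$ --- but the route is genuinely different. The paper computes $(\sigma_s)^*\sigma_s$ and observes that all four blocks are scalar multiples of the identity (because the off-diagonal blocks are single Clifford multiplications, so $C^*C=\rho(s)\,\Id$); this reduces invertibility to the scalar equation $\rho(s)=\rho^+\rho^-(s)$, which on $\Im z=0$ is then shown to force $t=\Theta=0$ by expanding and checking that the difference is a sum of nonnegative terms. You instead take the Schur complement with respect to the invertible $(+,+)$-block and analyze the resulting endomorphism $Q_s$ of the $(-)$-part spectrally, using that $BA$ becomes skew-adjoint with known square once $\Im z=0$. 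Your approach is a bit more conceptual and avoids the polynomial manipulation; the paper's approach avoids the need to track parity and adjoints of the individual Clifford factors. One notational caution: when you write $A^*=-A$ you are tacitly identifying $A=A_{z,\xi'}$ with the full odd Clifford operator $\Clif(\xi'\oplus z)$ on $\wedge\Hcal_y\otimes\wedge\Cbb$, not just its restriction $(+)\to(-)$; the computation is correct under that reading, but it is worth making explicit.
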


The symbol
$\sigma_0:=\left(\begin{array}{cc} 
{\rm Id} & e^t B^* \\ 
A_{z,\xi'}-e^t B& -{\rm Id}
\end{array}\right)
$
 is clearly clearly homotopic to 
\begin{eqnarray*}
\sigma_2&:=&\left(\begin{array}{cc} {\rm Id} & 0 \\ e^{-t} B& {\rm Id}\end{array}\right)\sigma_0
\left(\begin{array}{cc} {\rm Id} & 0\\ -e^{-t} B& {\rm Id} \end{array}\right)\\
&=& \left(\begin{array}{cc} 0 & e^t B^*\\ A_{z,\xi'}-(e^t - e^{-t}) B& 0\end{array}\right)
\end{eqnarray*}

Since the morphism $e^t B: (\wedge \Hcal\otimes\wedge\cu)^-\to (\wedge \Hcal\otimes\wedge\cu)^+$ is always 
invertible, its class vanishes. Hence we have
$$
[\tau']=[\sigma_0]=[\sigma_2]=[A_{z,\xi'}-(e^t - e^{-t}) B]\quad {\rm in}\quad \Ko_G(\T_G^*(S\times \Rbb)).
$$
We are now working with the morphism $\sigma_3: (\wedge \Hcal\otimes\wedge\cu)^+\to (\wedge \Hcal\otimes\wedge\cu)^-$
defined by
$$
\sigma_3(\xi'\oplus z)= \Clif(\xi')+\Clif(z-(e^t-e^{-t})i).
$$
Since $\frac{e^t-e^{-t}}{t}>0$ on $\Rbb$, we can deform the term $z-(e^t-e^{-t})i$ in $t+i\mathrm{Re}(z)$ without 
changing the intersection of the support with $\T_G^*(S\times\Rbb)$.

Finally we have proved that $\thom_{-\beta}(V)-\thom_{\beta}(V)$ is represented on $S\times \Rbb$ 
by the morphism $\Clif(\xi')+\Clif(t+i\mathrm{Re}(z)): 
(\wedge \Hcal\otimes\wedge\cu)^+\to (\wedge \Hcal\otimes\wedge\cu)^-$
which is by definition equal to $\sigmad^V\odot \bott(\T\Rbb)=i_!(\sigmad^V)$.

\medskip

\medskip

We finish this section with the proofs of the deformation Lemmas. For the family $\tau_s(x,\xi)=(s {\rm Id} +\Clif(\xi))\circ \epsilon -\Clif(\beta^s(x))$, we have 
$$
(\tau_s(x,\xi))^*\tau_s(x,\xi)= 
\left(\begin{array}{cc} 
s^2+\|\xi-\beta^s(x)\|^2 & -2s\Clif(\beta^s(x))\\ 
2s\Clif(\beta^s(x))& s^2+\|\xi+\beta^s(x)\|^2
\end{array}\right)
$$
Then $\det(\tau_s(x,\xi))=0$ if and only if 
$$
(s^2+\|\xi-\beta^s(x)\|^2)(s^2+\|\xi+\beta^s(x)\|^2)= 4s^2\|\beta^s(x)\|^2
$$
which is equivalent to the equality $(s^2+\|\xi\|^2+\|\beta^s(x)\|^2)^2= 4s^2\|\beta^s(x)\|^2+ 4(\xi,\beta^s(x))^2$. 
If $\xi\notin\Rbb\beta^s(x)$, we have $(\xi,\beta^s(x))^2<\|\xi\|^2\|\beta^s(x)\|^2$, and then
$$
(s^2+\|\xi\|^2+\|\beta^s(x)\|^2)^2<4(s^2+\|\xi\|^2)\|\beta^s(x)\|^2
$$
which gives $(s^2+\|\xi\|^2-\|\beta^s(x)\|^2)^2<0$ which is contradictory. Then $\det(\tau_s(x,\xi))$ $=0$ if and only if 
$\xi\in\Rbb\beta^s(x)$ and $s^2+\|\xi\|^2-\|\beta^s(x)\|^2=0$. 
If furthermore $\xi\in\T_G^* V\vert_x$, then\footnote{It is due to the fact that $(\beta^s(x),\beta(x))> 0$ 
when $x\neq 0$.}  $\xi=0$.  We have proved that $\mathrm{Support}(\tau_s)\cap \T_G^* V$ is equal to the 
compact set $\{(x,\xi)\ \vert\ \xi=0\ \mathrm{and}\ s^2-\|\beta^s(x)\|^2=0\}$. So $s\in[0,1]\to \tau_s$ is an homotopy of transversally elliptic symbols. 

\medskip


For the family\footnote{We write $A$ for $A_{z,\xi'}$.} $\sigma_s:=\left(\begin{array}{cc} 
{\rm Id} & sA^*+ e^t B^* \\ 
A-e^t B& -{\rm Id}
\end{array}\right),$ 
we have 
$$
(\sigma_s)^*\sigma_s =
\left(\begin{array}{cc} \rho^+\,{\rm Id} & (1-s)A^*+ 2e^t B^* \\ (1-s)A+ 2e^t B & \rho^-(s)Id\end{array}\right)
$$
with $\rho^+= 1+\|\xi'\|^2+\|z-e^t i\|^2$ and $\rho^-(s)=1+\|s\xi'\|^2+\|sz+ e^t i\|^2$. We check 
easily that $((1-s)A^*+ 2e^t B^*)((1-s)A+ 2e^t B)=\rho(s){\rm Id}$ with 
$$
\rho(s)=\|(s-1)\xi'\|^2+\|(s-1)z+ 2e^t i\|^2.
$$
Finally $\det(\sigma_s)=0$ if and only if $\rho(s)=\rho^-(s)\rho^+$. In other words, $(y,t;\xi'\oplus z)$ belongs to the 
support of $\sigma_s$ if and only if
$$
\|(s-1)\xi'\|^2+\|(s-1)z+ 2e^t i\|^2=\left(1+\|s\xi'\|^2+\|sz+ e^t i\|^2\right)\left(1+\|\xi'\|^2+\|z-e^t i\|^2\right).
$$

Let us suppose now that $\xi'\oplus z\in \T_G^*(S\times \Rbb)$. It imposes $\mathrm{Im}(z)=0$, and the last relation 
becomes $(s-1)^2\Theta+ 4e^{2t}=\left(1+e^{2t} + s^2\Theta  \right)\left(1+e^{2t}+\Theta\right)$ 
with $\Theta=\|\xi'\|^2+\|z\|^2$. It is easy to see that the last relation holds if and only if $t=\Theta=0$. Finally 
we have proved that
$$
\mathrm{Support}(\sigma_s)\cap \T_G^*(S\times \Rbb)=\left\{(y,t;\xi'\oplus z)\ \vert\  t=0,\ \xi'=0, \ z=0\right\}, 
$$
and then $\sigma_s, s\in[0,1]$ defines an homotopy of transversally elliptic symbols.


{\small

}

\end{document}